\numberwithin{equation}{section} 
\numberwithin{figure}{section} 
\numberwithin{table}{section} 
\theoremstyle{definition}
\newtheorem{remark}[equation]{Remark}
\crefname{notation}{notation}{notation}
\newtheorem{example}[equation]{Example}
\theoremstyle{plain}
\newtheorem{definition}[equation]{Definition}
\newtheorem{theorem}[equation]{Theorem}
\newtheorem{lemma}[equation]{Lemma}
\newtheorem{corollary}[equation]{Corollary}
\newtheorem{proposition}[equation]{Proposition}
\newtheorem{thmA}{Theorem}
\DeclareMathOperator{\lcm}{lcm} 
\newcommand{\R}{\mathbb{R}}
\newcommand{\Z}{\mathbb{Z}}
\newcommand{\Si}{\Sigma}
\newcommand{\G}{\Gamma}
\newcommand{\tat}{t\^ete-\`a-t\^ete }
\newcommand{\Tat}{T\^ete-\`a-t\^ete }
\newcommand{\MS}{\mathbb{S}}
\newcommand{\TG}{\widetilde{\G}}
\newcommand{\calC}{\mathcal{C}}
\newcommand{\MCG}{\mathrm{MCG}}
\begin{document}
    \title{General T\^ETE-\`A-T\^ETE graphs and Seifert 
    manifolds}
    
    \author{Pablo Portilla Cuadrado}
    \address{(1)ICMAT,  Campus Cantoblanco UAM, C/ Nicol\'as Cabrera, 13-15, 
    28049 Madrid, Spain
        (2) BCAM,  Basque Center for Applied Mathematics, Mazarredo 14, 48009 
        Bilbao, Basque Country,  Spain}
    \email{p.portilla89@gmail.com}
    \begin{abstract}
         \Tat graphs and relative \tat graphs were introduced by N. A'Campo in 
         2010 to model monodromies of isolated plane curves. By recent work
         of  Fdez de Bobadilla,  Pe Pereira and the author, they provide a 
         way of modeling the periodic mapping classes that leave some boundary 
         component invariant. In this work we introduce the notion of general 
         \tat graph and prove that they model all periodic mapping classes. We 
         also describe algorithms that take a Seifert manifold and a horizontal 
         surface and return a \tat graph and vice versa.
    \end{abstract}
    \thanks{Author supported by SVP-2013-067644 Severo Ochoa FPI grant and by 
    project by MTM2013-45710-C2-2-P, the two of them by MINECO; also supported 
    by the project ERCEA 615655 NMST Consolidator Grant}
    \maketitle
    
    \tableofcontents

    \section{Introduction}
    
    In \cite{Camp1} N. A'Campo introduced the notion of pure \tat graph in 
    order to model monodromies of plane curves. These 
    are metric ribbon graphs $\G$ without univalent vertices that satisfy a 
    special  property called {the \tat property}. One usually sees the ribbon 
    graph as a strong deformation retract of a surface $\Si$ with non-empty 
    boundary, which is called the {\em thickening}. The \tat 
    propety says that if you pick a point $p$, then walk distance of $\pi$ in 
    any direction from that point and you always turn right at vertices, 
    you get to the same point no matter the inital direction. This 
    property defines an  element in the mapping class group $\MCG^+(\Si, 
    \partial \Si)$ which is freely periodic. 

    In \cite{Graf}, C. Graf proved  that if one allows univalent vertices in 
    \tat graphs, then the set of mapping classes produced by \tat graphs are 
    all freely   periodic mapping classes of $\MCG^+(\Si, \partial \Si)$  with 
    positive  fractional Dehn twist coefficients. In \cite{jmp} this result was 
    improved  by showing that one  does not need to enlarge the original class 
    of metric  ribbon graphs used  to prove the same theorem.

    A bigger class of graphs was introduced in \cite{Camp1}, the relative \tat 
    graphs. These are pairs $(\G, A)$ formed by a metric ribbon graph $\G$ and 
    a subset $A \subset \G$ which is a collection of circles. Seen as a strong 
    deformation retract of a surface, this pair is properly embedded, i.e. 
    $(\G, A) \hookrightarrow (\Si, \partial \Si)$ and $\partial \Si \setminus  
    A \neq \emptyset$. They satisfy the {\em relative \tat property} which is 
    similar to the \tat property and defines an element in $\MCG^+(\Si, 
    \partial \Si \setminus A)$ which is freely periodic. In \cite{jmp} it was 
    proved that the set of mapping classes modeled by relative \tat graphs are 
    all freely periodic mapping classes of $\MCG^+(\Si, \partial \Si \setminus 
    A)$  with positive  fractional Dehn twist coefficients at the boundary 
    components in $\partial \Si \setminus A$.
        
    At this point there is a natural question which was already posed in 
    \cite{Graf1}, 
    {\em how can one complement the definition of \tat graph to be able to 
    model all periodic mapping classes?} (even if they do not leave any 
    boundary component invariant). To cover these cases, we introduce general 
    \tat 
    graphs (see \Cref{def:general_tat}). These are metric ribbon graphs with 
    some special univalent vertices 
    $\mathcal{P} \subset \partial \Si$ and a permutation acting on these 
    vertices. An analogous general \tat property is defined. A 
    general \tat graph defines a periodic mapping class in $\MCG^+(\Si)$.
    
    As our main result, we prove:
    
     \begin{thmA}
    The mapping class of any  periodic automorphism $\phi: \Si \to \Si$ of a 
    surface can be realized via general \tat graphs. Moreover, the general \tat 
    graph can be extended to a pure or relative \tat graph, thus realizing the 
    automorphism as a restriction to $\Si$ of a periodic automorphism on a 
    surface $\hat\Si \supset \Si$ that leaves each boundary component invariant.
     \end{thmA}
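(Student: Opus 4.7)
The strategy is to bootstrap Theorem~A from the realization results for pure and relative \tat graphs of~\cite{jmp}. Given a periodic $\phi:\Si\to\Si$ of order $n$, the plan is to extend $\phi$ to a larger surface on which every boundary component is individually invariant, invoke the known realization theorems there, and finally restrict the resulting \tat graph back to $\Si$.

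For the extension step, $\phi$ partitions the boundary components of $\Si$ into orbits. For each orbit $\{b_1,\dots,b_k\}$ of length $k$ (so $k\mid n$), attach a $(k+1)$-holed sphere along $b_1\cup\dots\cup b_k$, leaving one new boundary component $c$, and extend $\phi$ over this cap as the order-$k$ rotation cyclically permuting the $b_i$ as $\phi$ does. Repeating for every orbit produces $\hat\Si\supset\Si$ and a periodic extension $\hat\phi:\hat\Si\to\hat\Si$ of the same order $n$, each of whose boundary components is setwise invariant. Compute the fractional Dehn twist coefficient of $\hat\phi$ at each boundary component of $\hat\Si$ and put into $A$ those with non-positive coefficient (orienting the cap rotations compatibly with $\hat\Si$ one can in fact arrange positivity throughout, giving $A=\emptyset$); by~\cite{jmp} the mapping class $[\hat\phi]$ is then realized either by a pure \tat graph $\hat\G$ on $\hat\Si$, or by a relative \tat graph $(\hat\G,A)\hookrightarrow(\hat\Si,\partial\hat\Si\setminus A)$.

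For the restriction step, place $\hat\G$ $\hat\phi$-equivariantly and transversely to $\partial\Si\subset\hat\Si$ inside each cap. Define $\G:=\hat\G\cap\Si$, take $\Pu:=\hat\G\cap\partial\Si$ as the resulting finite set of univalent endpoints on $\partial\Si$, and let $\sigma$ be the permutation of $\Pu$ induced by the cyclic rotation in each cap (equivalently, by $\hat\phi|_{\Pu}$). A \tat walk of $\hat\G$ that exits $\Si$ at $p\in\Pu$, traverses an arc of the rotating cap, and re-enters $\G$ at $\sigma(p)$ is precisely the mechanism axiomatised by \Cref{def:general_tat}. Hence $(\G,\Pu,\sigma)$ is a general \tat graph realizing $[\phi]$ on $\Si$, and by construction it extends to the pure or relative \tat graph $\hat\G$ on $\hat\Si$, giving the second claim of the theorem.

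The expected main obstacle is the verification in the last step: matching the geometric rotation of $\hat\phi$ on each cap with the combinatorial jumping rule of \Cref{def:general_tat}, and showing that the mapping class on $\Si$ induced by $(\G,\Pu,\sigma)$ is exactly $[\phi]$ rather than some twisted modification of $[\hat\phi|_\Si]$. Equivariance of $\hat\G$ under $\hat\phi$ (which should be arrangeable because the constructions in~\cite{jmp} are canonical enough to be chosen equivariant under any finite group of automorphisms) together with careful bookkeeping of arc lengths within each cap should close this gap.
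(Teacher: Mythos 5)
Your overall strategy — extend $\phi$ to a larger surface $\hat\Si$ where every boundary component is invariant, realize $\hat\phi$ by a pure or relative \tat graph via \cite{jmp}, then restrict back to $\Si$ and read off a general \tat graph — is exactly the route taken in the paper's proof of \Cref{thm:general_tat}. However, your extension step has a genuine gap. You propose to cap each boundary orbit $\{b_1,\dots,b_k\}$ with a $(k+1)$-holed sphere and extend $\phi$ over it as an ``order-$k$ rotation.'' But if $\phi$ has order $n$ and the orbit has length $k$, then $\phi^k|_{b_1}$ is in general a nontrivial rotation of some order $m>1$, so the restriction of $\phi$ to $b_1\cup\dots\cup b_k$ has order $n'=km$, not $k$. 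No order-$k$ homeomorphism of the cap can restrict to this. Worse, an order-$n'$ periodic extension over a $(k+1)$-holed sphere need not exist at all: Riemann--Hurwitz forces $k-1=km\sum_i(1-1/q_i)$ for the cone orders $q_i$ of the quotient annulus, and for instance with $k=m=2$ this equation has no solution (each term $1-1/q_i\ge 1/2$ already exceeds $1/4$). So the fixed cap topology you chose is not always achievable.

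The paper's fix for this is precisely what its construction is organized around: rather than attaching along entire boundary circles, it picks a small arc $\alpha\subset C_1$ disjoint from its $\phi$-iterates, considers the full orbit $\alpha^0,\dots,\alpha^{n-1}$ of arcs (note $n$ arcs, indexed by the full order of $\phi$, not $k$), and glues an $n$-armed star-shaped piece $S$ along them so that the rotation of $S$ matches $\phi$; the newly created boundary circles $C'_i$ that pick up the residual twisting of $\phi^k|_{b_i}$ are then capped by disks with the Alexander trick, and the paper verifies via Hurwitz that the disk centers become branch points of the Galois cover. The topology of the added piece is thus determined by the data rather than prescribed in advance, which is what dissolves the obstruction. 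The rest of your plan (equivariant placement of $\hat\G$, restriction to $\G=\hat\G\cap\Si$ with $\Pu=\hat\G\cap\partial\Si$, permutation $\sigma$ from $\hat\phi|_{\Pu}$, matching the safe-walk jump rule to the rotation through the cap) is the same as the paper, though the paper additionally spells out that the spine downstairs must be routed through the new branch points $q_i$ via short edges $s_i$ and that the metric on the edges meeting $\partial\Si$ must be redefined to length $\epsilon$; you identify this verification as the ``main obstacle,'' but in fact the harder obstacle is the one in the extension step.
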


    The mapping torus of a periodic surface automorphism is a Seifert manifold 
    and a orientable horizontal surface of a fiber-oriented Seifert manifold 
    has a periodic monodromy induced on it. Hence, it is natural to assign a 
    \tat graph to a Seifert manifold and a horizontal surface on 
    it and vice versa. The rest of the work is devoted to understanding this 
    relation.
    
    In \Cref{sec:seifert_man} we briefly review the theory of Seifert manifolds 
    and plumbing graphs. The theory of Seifert manifolds is classical and there 
    is plenty  of literature about it (see for example \cite{Neum}, 
    \cite{Neum2}, \cite{Neum3}, \cite{Neum5}, \cite{Hir}, \cite{Hatch} or 
    \cite{Peder}). Because of this,  we try to avoid repeating well-known 
    results.  However,  there is not such thing as {\em standard conventions} 
    in Seifert  manifolds. Since the conventions that we choose are very 
    important for \Cref{sec:algor}, we take  some time to fix them carefully. 
    
    In \Cref{sec:hor_sur}, we review the theory about horizontal surfaces in a 
    Seifert manifold $M$. This has been studied from different 
    point of views in the literature. For example, in \cite{EisNeu} it is 
    proved a classification in the more general case when $M$ is an integral 
    homology sphere. In \cite{Pich} Pichon provides existence of fibrations of 
    any graph manifold $M$ by producing algorithmically a complete list of 
    the conjugacy and isotopy invariants of the automorphisms  whose 
    associated  mapping torus is  diffeomorphic to $M$. We review
    some of this results and write them in a language that best suits our 
    notation and conventions. Among these results is a classification of 
    horizontal surfaces of a Seifert manifold with boundary.
    
    In \Cref{sec:algor} we detail two algorithms. One takes 
    a Seifert  manifold and a horizontal surface as input and returns as 
    output a general,  relative or pure \tat graph realizing the horizontal 
    surface and its monodromy. This algorithms differs from  similar results 
    in the literature in that our method produces directly the monodromy (in 
    this case the \tat graph) without computing the conjugacy and isotoy 
    invariants of the corresponding periodic mapping class. The other algorithm 
    works in the opposite direction by taking a general, relative or pure \tat 
    graph and producing the corresponding Seifert manifold and horizontal 
    surface.
    
    Finnaly, \Cref{sec:examples} contains a couple of detailed examples in 
    which we apply the two  algorithms.
    
    \section*{Acknowledgments}
    I thank my advisors Javier Fdez. de Bobadilla and Mar\'ia Pe Pereira for 
    many useful comments and suggestions during the writing of this work. I am 
    specially grateful to Enrique Artal Bartolo for taking the 
    time to explain carefully many of the aspects of the theory of Seifert 
    manifolds. 
    
    I also thank BCAM for having the ideal environment in which most of this 
    work was developed.
    
    \section{General \tat structures}
    \label{sec:general}

    In this section we study any orientation preserving periodic homeomorphism. 
    Let $\phi: \Si \to \Si$ be such a homeomorphism. We realize its 
    boundary-free isotopy 
    type and its conjugacy class in $MCG(\Si)$ by a generalization of \tat 
    graphs, using a 
    technique that reduces to the case of homeomorphisms of a larger surface 
    that leave all boundary components invariant.
    
    Contrarily to what was done in \cite{jmp}, we allow ribbon graphs with some 
    {\em special} univalent vertice.
    
    \begin{definition}
        A ribbon graph with boundary is a pair $(\G,\mathcal{P})$ where $\G$ is 
        a ribbon
        graph, and $\mathcal{P}$ is the set of univalent vertices, with the 
        following additional property: given any vertex $v$ of valency greater 
        than $1$ in the cyclic ordering of adjacent edges 
        $e(v)$ there are no two consecutive edges connecting $v$ with vertices 
        in $\mathcal{P}$.
    \end{definition}
    
    In order to define the thickening of a ribbon graph with boundary we need 
    the following construction:
    
    Let $\G'$ be a ribbon graph (without univalent vertices)and let $\Sigma$ be 
    its thickening. Let 
    $$g_{\G'}:\Si_{\G'}\to\Si$$
    be the gluing map. The surface $\Si_{\G'}$ splits as a disjoint union of
    cylinders $\coprod_i\widetilde{\Si}_i$. Let $w$ be a vertex of $\G'$. The
    cylinders  $\widetilde{\Si}_i$ such that $w$ belongs to 
    $g_{\G'}(\widetilde{\Si}_i)$ 
    are in  a natural bijection with the pairs of consecutive edges $(e',e'')$ 
    in the  cyclic order of the set $e(w)$ of adjacent edges to $w$.
    
    Let $(\G,\mathcal{P})$ be a ribbon graph with boundary. The graph $\G'$ 
    obtained by erasing from $\G$ the set $E$ of all vertices in $\mathcal{P}$ 
    and its adjacent  edges is a ribbon graph. Consider the thickening surface 
    $\Si$ of $\G'$.  Let $e$ be an edge connecting a vertex $v\in\mathcal{P}$ 
    with another vertex $w$, let $e'$ and $e''$ be the inmediate predecesor and 
    succesor of $e$ in the cyclic order of $e(w)$. By the defining property of 
    ribbon graphs with boundary  they are consecutive edges in $e(w)\setminus 
    E$, and hence determine a unique associated cylinder which will be denoted 
    by  $\widetilde{\Si}_i(v)$.
    
    Each cylinder $\widetilde{\Si}_i$ has two boundary components, one, denoted 
    by $\widetilde{\G}_i$ corresponds to the boundary component obtained by 
    cutting the graph,  and the other, called $C_i$, corresponds to a boundary 
    component of $\Si$. Fix a cylinder $\widetilde{\Si}_i$. Let 
    $\{v_1,...,v_k\}$ be the vertices of $\mathcal{P}$ whose associated 
    cylinder is $\widetilde{\Si}_i$. Let $\{e_1,...,e_k\}$ be the
    corresponding edges, let $\{w_1,...,w_k\}$ be the corresponding vertices at
    $\G'$, and let 
    $\{w'_1,...,w'_k\}$ be the set of preimages by $g_{\G'}$ contained in
    $\widetilde{\Si}_i$. The defining property of ribbon graphs with boundary 
    imply that $w'_i$ and $w'_j$ 
    are pairwise different if $i\neq j$. Furthermore, since $\{w'_1,...,w'_k\}$ 
    is included in the circle $\widetilde{\G}_i$, which has an orientation 
    inherited from $\Sigma$, 
    the set $\{w'_1,...,w'_k\}$, and hence also $\{e_1,...,e_k\}$ and
    $\{v_1,...,v_k\}$ has a cyclic order. We assume that our indexing respects 
    it. 
    
    Fix a product structure $\MS^1\times I$ for each cylinder 
    $\widetilde{\Si}_i$, where $\MS^1\times\{0\}$ corresponds to the boundary 
    component $\widetilde{\G}_i$, and $\MS^1\times\{1\}$ corresponds to the 
    boundary component of $C_i$.
    
    Using this product structure we can embedd $\G$ in $\Si$: for each vertex
    $v\in\mathcal{P}$ consider the corresponding cylinder 
    $\widetilde{\Si}_{i(v)}$,
    let $w'$ be the point in $\widetilde{\G}_{i(v)}$ determined above. We 
    embedd the segment $g_{\G'}({w'}\times I)$ in $\Si$. 
    
    Doing this for any vertex $v$ we obtain an embedding of $\G$ in $\Si$ such 
    that all the vertices $\mathcal{P}$ belong to the boundary $\partial\Si$, 
    and such that $\Si$ admits $\G$ as a regular deformation retract. 
    
    \begin{definition}
        Let $(\G,\mathcal{P})$ be a ribbon graph with boundary. We define the 
        thickening
        surface $\Si$ of $(\G,\mathcal{P})$
        to be the thickening surface of $\G'$ toghether with the embeding
        $(\G,\mathcal{P})\subset (\Si,\partial\Si)$ constructed above. We say 
        that $(\G,\mathcal{P})$ is a \emph{general spine} of 
        $(\Si,\partial\Si)$.
    \end{definition}
    
    \begin{definition}[General safe walk]
        Let $(\G,\mathcal{P})$ be a metric ribbon graph with boundary. Let 
        $\sigma$ be apermutation of $\mathcal{P}$.
        
        We define a \emph{general safe walk in $(\G,\mathcal{P},\sigma)$} 
        starting at a
        point $p \in \G \setminus v(\G)$ to be a map $\gamma_p:[0,\pi] 
        \rightarrow \G$
        such that
        \begin{itemize}
            \item[1)] $\gamma_p(0)=p$ and $|\gamma_p'|=1$ at all times.
            \item[2)] when $\gamma_p$ gets to a vertex of valency $\geq 2$ it 
            continues
            along the next edge in the cyclic order.
            \item[3)] when $\gamma$ gets to a vertex in $\mathcal{P}$, it 
            continues along
            the edge indicated by the permutation $\sigma$. 
        \end{itemize}
    \end{definition}
    
    \begin{definition}[General \tat graph]\label{def:general_tat}
        Let $(\G,\mathcal{P},\sigma)$ be as in the previous definition. Let 
        $\gamma_p$,
        $\omega_p$ be the two safe walks starting at a point $p$ in 
        $\G\setminus v(\G)$.
        
        We say $\G$ has the \emph{general \tat property} if 
        \begin{itemize}
            \item for any $p\in \G \setminus v(\G)$ we have 
            $\gamma_p(\pi)=\omega_p(\pi)$
        \end{itemize}
        Moreover we say that $(\G,\mathcal{P},\sigma)$ gives a \emph{general 
        \tat
            structure} for $(\Si,\partial\Si)$ if $(\Si,\partial\Si)$ is the 
            thickening of
        $(\G,\mathcal{P})$. 
    \end{definition}
    
    In the following construction we associate to a general \tat graph
    $(\G,\mathcal{P},\sigma)$ a homeomorphism of $(\G,\mathcal{P})$ which 
    restricts to the permutation
    $\sigma$ in $\mathcal{P}$; we call it the \emph{general \tat homeomorphism 
    of $(\G,\mathcal{P},\sigma)$}. We construct also a homeomorphism of the 
    thickening surface which leaves $\G$ invariant and restricts on $\G$ to 
    the general \tat homeomorphism of $(\G,\mathcal{P},\sigma)$. We construct 
    the homeomorphism on the graph and on its thickening simultaneously.
    
    Consider the homeomorphism of $\G'\setminus v(\G)$ defined by
    $$p\mapsto \gamma_p(\pi).$$
    The same proof of \cite[Lemma 3.6]{jmp} shows that there is an extension of 
    this homeomorphism to a homeomorphism 
    $$\sigma_{\G}:\G\to\G.$$ The restriction of the general \tat homeomorphism 
    that we are constructing  to $\G'$ coincides with $\sigma_{\G}$. The 
    mapping $\sigma_\G$ leaves $\G'$
    invariant for being a homeomorphism. Let $\TG'$ be the union of the circles 
    $\TG_i$. The homeomorphism $\sigma_\G|_{\G'}$ 
    lifts to a periodic homeomorphism 
    $$\tilde{\sigma}:g_{\G'}^{-1}(\TG')\to 
    g_{\G'}^{-1}(\TG'),$$
    which may exchange circles in the following way. For any 
    $p\in\TG'$,
    the points in $g_{\G'}^{-1}(p)$ corresponds to the starting poing of safe 
    walks in $\TG'$ starting at $p$. A safe walk starting at $p$ is 
    determined by the point $p$ and an starting direction at an edge containg 
    $p$. 
    
    As we have seen, if $(\G,\mathcal{P},\sigma)$ is a general \tat structure 
    for $(\Si,\partial\Si)$ then the surface $\Si_{\G'}$ is a disjoint union of
    cylinders. The lifting $\tilde{\sigma}$ extends to $\Si_{\G'}$ similarly as 
    with the definition of the homoemorphism corresponding to a \tat structure 
    defined in \cite{jmp}. This extension interchanges some cylinders 
    $\widetilde{\Si}_i$ and goes down to an homeomorphism of $\Si$. We denote 
    it by $\phi_{(\G,\mathcal{P},\sigma)}$. If
    necessary, we change the embedding of the part of $\G$ not contained in 
    $\G'$ in
    $\Si$ 
    such that it is invariant by $\phi_{(\G,\mathcal{P},\sigma)}$. This is done 
    by
    an adequate choice of the trivilizations of the cylinders.
    
    \begin{definition}
        \label{def:generaltatauto}
        The homeomorphism  
        $\phi_{(\G,\mathcal{P},\sigma)}$ is by definition the homeomorphism of 
        the
        thickening, and its restriction to $\G$ the \emph{general \tat 
        homeomorphism of 
            $(\G,\mathcal{P},\sigma)$}.
    \end{definition}
    
    With the notation and definitions introduced we are ready to state and 
    proof the main result of the work.
    
    \begin{theorem}\label{thm:general_tat} 
        Given a periodic homeomorphism $\phi$ of a surface with boundary
        $(\Si,\partial\Si)$ which is not a disk or a cylinder, the following 
        assertions
        hold: 
        \begin{enumerate}[(i)]
            \item There is a general \tat graph $(\G,\mathcal{P},\sigma)$ such 
            that the 
            thickening of $(\G,\mathcal{P})$ is $(\Si,\partial\Si)$, the 
            homeomorphism
            $\phi$ leaves $\G$ invariant and we have the 
            equality $\phi|_{\G}=\phi_{(\G,\mathcal{P},\sigma)}|_{\G}$.
            \item We have the equality of boundary-free isotopy classes
            $[\phi|_{\G}]=[\phi_{(\G, \mathcal{P}, \sigma)}]$.
            \item The homeomorphisms $\phi$ and $\phi_{(\G, \mathcal{P}, 
            \sigma)}$ are
            conjugate.
        \end{enumerate}
        
    \end{theorem}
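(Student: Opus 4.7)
The plan is to reduce the theorem to the previously-known case in which the periodic homeomorphism preserves each boundary component setwise. Given $\phi:\Si\to\Si$ of order $n$, I first build an enlargement $\Si\subset\hat\Si$ together with a periodic extension $\hat\phi:\hat\Si\to\hat\Si$ of the same order $n$ that fixes each component of $\partial\hat\Si$ setwise. For each $\phi$-orbit $\{c_1,\dots,c_k\}$ of boundary components of $\Si$, I glue on a connected model surface $F$ with $k+1$ boundary circles, one attached to each $c_i$ and one new outer circle, chosen so that the natural $\Z/k$-rotational symmetry of $F$ (for instance, $F$ a large disk with $k$ smaller disks removed, placed symmetrically) extends $\phi$ equivariantly and produces, on the new outer circles, boundary components that $\hat\phi$ fixes setwise.

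Next, I apply the results of \cite{jmp} to $(\hat\Si,\hat\phi)$, which provide a pure or relative \tat graph $(\hat\G,A)$ on $\hat\Si$ whose induced \tat homeomorphism is conjugate and boundary-free isotopic to $\hat\phi$. Boundary components of $\hat\Si$ with non-positive fractional Dehn twist coefficient under $\hat\phi$ are absorbed into $A$; the rest carry pure \tat structure. I choose $\hat\G$ so that its intersection with each added piece $F$ consists of exactly $k$ radial arcs, each meeting a single $c_i$ at one interior point, while the portion of $\hat\G$ responsible for the outer boundary of $F$ lies entirely in $\hat\Si\setminus\Si$. Setting $\G:=\hat\G\cap\Si$ then produces a ribbon graph with boundary whose set of univalent vertices $\mathcal{P}$ lies in $\partial\Si$, and the cyclic permutation of the radial arcs in $F$ by $\hat\phi$ descends to the permutation $\sigma$ of $\mathcal{P}$.

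With this construction in place, the pure or relative \tat property of $(\hat\G,A)$ restricts to the general \tat property of $(\G,\mathcal{P},\sigma)$: a general safe walk in $\G$ is exactly the trace in $\Si$ of a safe walk in $\hat\G$, with the excursions through $F$ encoded by the jump at a vertex of $\mathcal{P}$ prescribed by $\sigma$. This yields (i), since $\phi|_\G$ and $\phi_{(\G,\mathcal{P},\sigma)}|_\G$ are both restrictions to $\G$ of the \tat homeomorphism of $\hat\G$, which was arranged to coincide with $\hat\phi$ on the invariant subgraph $\hat\G$. Parts (ii) and (iii) then follow from the corresponding statements for $(\hat\G,A)$ on $\hat\Si$: the boundary-free isotopy between $\hat\phi$ and $\phi_{(\hat\G,A)}$, respectively the conjugating homeomorphism of $\hat\Si$, can be chosen to preserve the subsurface $\Si$ and hence restricts to the required isotopy or conjugation on $\Si$ between $\phi$ and $\phi_{(\G,\mathcal{P},\sigma)}$.

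I expect the main technical obstacle to be the first step: the attached piece $F$ and the extension $\hat\phi$ must be chosen so that the resulting fractional Dehn twist coefficients at the new outer boundaries of $\hat\Si$ either are positive, so the jmp machinery produces a pure \tat structure on that side, or else those boundaries are accommodated by the relative set $A$. Achieving this equivariantly while preserving the order of $\phi$ requires a careful choice of the $\Z/k$-action on $F$ and possibly the addition of a collar to tune the rotational behaviour at the outer boundary, but is always possible because one has complete freedom in how $\hat\phi$ acts on the newly attached interior of $F$.
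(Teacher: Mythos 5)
Your high-level strategy coincides with the paper's: extend $\phi$ to a periodic $\hat\phi$ on a larger surface $\hat\Si$ fixing every boundary component setwise, apply the \cite{jmp} machinery to get a pure or relative \tat graph $\hat\G$, and recover $(\G,\mathcal{P},\sigma)$ by intersecting with $\Si$. But there is a genuine gap in the construction of the attached piece that the paper's argument explicitly avoids.

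You glue $F$ to $\Si$ along the \emph{entire} boundary circles $c_1,\dots,c_k$ of a $\phi$-orbit, and ask that the ``natural $\Z/k$-rotational symmetry'' of $F$ (e.g.\ a disk with $k$ symmetrically placed holes) extend $\phi$ equivariantly. This fails whenever $\phi^k|_{c_i}$ is a nontrivial rotation of $c_i$, which happens for generic orbits of boundary components: a $\Z/k$-action on $F$ satisfies $(\hat\phi|_F)^k=\mathrm{id}$ on the inner boundaries of $F$, but the gluing would then force $\phi^k|_{c_i}=\mathrm{id}$ as well. One actually needs $\hat\phi|_F$ to generate a $\Z/n$-action (with $n$ the order of $\phi$), and such an action with the prescribed boundary behaviour need not exist on a disk with $k$ holes. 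For instance, take $n=4$, $k=2$, and $\phi^2|_{c_1}$ an order-$2$ rotation: Riemann--Hurwitz shows that no $\Z/4$-action on a genus-$0$ surface with three boundary circles can cyclically permute two of them while fixing the third setwise, so $F$ would have to carry positive genus and interior branch points. The paper sidesteps this entirely by gluing a star-shaped piece $S$ along an orbit of $n$ disjoint \emph{arcs} $\alpha^0,\dots,\alpha^{n-1}$ rather than along entire circles: equivariance of the gluing is then automatic, and the residual pieces of $\partial\Si$ are capped by disks, with the resulting centers $t_i$ becoming the unavoidable new ramification points (as the paper checks via Hurwitz).

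The second gap is connected to the first: the extension inevitably creates new interior branch points (the $t_i$'s in the paper, or the branch points of the $\Z/n$-action on your $F$), and a valid \tat spine $\hat\G$ of $\hat\Si$ must contain all branch points. So you cannot simply ``choose $\hat\G$ so that its intersection with $F$ is $k$ radial arcs'' meeting one $c_i$ each; the graph inside the added piece must visit the branch points, which is why the paper explicitly modifies the \cite[Theorem 5.12]{jmp} construction by adding the edges $s_i$ joining $q_i=p(t_i)$ to the spine and then adjusting the metric. Without that modification, the preimage of the orbit spine is not a spine of $\hat\Si$. Your argument that a general safe walk in $\G$ is the trace of a safe walk in $\hat\G$, and that the isotopy and conjugation on $\hat\Si$ restrict to $\Si$, is correct in spirit and matches the paper, but it relies on these two points being fixed first.
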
 
    
    \begin{proof}
        In the first part of the proof we extend the homeomorphism $\phi$ to a
        homeomorphism $\hat{\phi}$ of a bigger surface $\hat{\Si}$ 
        that
        leaves all the boundary 
        components invariant. Then, we find a \tat graph $\hat{\G}$ for
        $\hat{\phi}$ such that $\hat{\G}\cap \Si$, with a small 
        modification in the metric and a suitable permutation, is a general 
        \tat graph for $\phi$. 
        
        Let $n$ be the order of the homeomorphism. Consider the permutation 
        induced by $\phi$ in the set of boundary 
        components. Let $\{C_1,...,C_m\}$ be an orbit of cardinality strictly 
        bigger than  $1$, numbered such that $\phi(C_i)=C_{i+1}$ and 
        $\phi(C_m)=C_1$. Take an arc $\alpha\subset C_1$ small enough so that 
        it is disjoint from all its   iterations by $\phi$. 
        Define the arcs $\alpha^i:=\phi^i(\alpha)$ for $i\in\{0,...,n-1\}$, 
        which are contained in $\cup_iC_i$. Obviously we have the equalities 
        $\alpha^{i+1}=\phi(\alpha^i)$ and $\phi(\alpha^{n-1})=\alpha^0=\alpha$.
        
        \begin{figure}[!ht]
            \centering
            \includegraphics[scale=0.5]{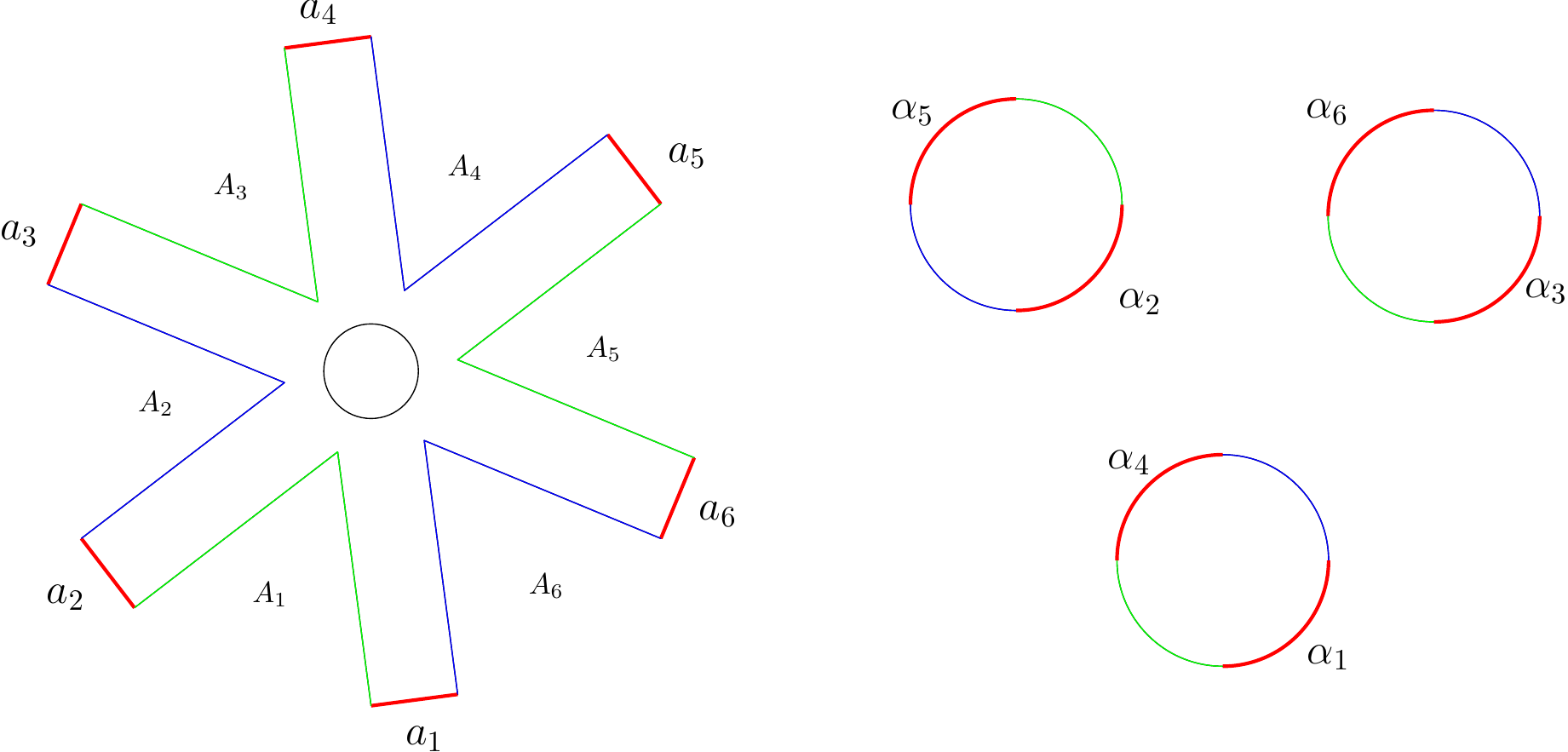}
            \caption{Example of a star-shaped piece $S$ with 6 arms on the left 
            and boundary
                components components on the right. The arcs along which the 
                two pieces are
                glued, are marked in red. In blue and red are the boundaries of 
                the two disks
                that we used to cap off the new boundaries.}
            \label{fig:rot}
        \end{figure}
        
        We consider a star-shaped piece $S$ of $n$ arms as in \Cref{fig:rot}. 
        We denote
        by $D$ the central boundary component. 
        Let $a^0, \ldots, a^{n-1}$  be the boundary of the arms of the 
        star-shaped piece
        labelled in the picture, 
        oriented counterclockwise. We consider the rotation $r$ of order $n$ 
        acting on
        this piece such that $r(a^i)=a^{i+1}$. Note that this rotation leaves 
        $D$
        invariant. 
        
        We consider the surface $\hat{\Si}$ obtained by gluing $\Si$ and $S$ 
        identifying
        $a^i$ with $\alpha^i$ reversing the orientation, and such that $\phi$ 
        and the
        rotation $r$
        glue to a periodic homeomorphism $\hat{\phi}$ in the resulting surface.
        
        The boundary components of the new surface are precisely the boundary 
        components 
        of $\Si$
        different from $\{C_1,...,C_m\}$, the new boundary component $D$, and 
        the boundary components $C'_1$,...,$C'_k$ that contain the part of the 
        $C_i$'s not included in the union $\cup_{i=0}^{n-1}\alpha^i$.
        
        The homeomorphism $\hat{\phi}$ leaves $D$ invariant and may interchange 
        the new boundary components $C'_1$,...,$C'_k$. 
        We cup each component $C'_i$ with a disk $D_i$ and extend the 
        homeomorphism by
        the Alexander trick, obtaining a homeomorphism $\hat{\phi}$ of a 
        bigger surface 
        $\hat{\Si}$. The only new ramification points that the action of
        $\hat{\phi}$ may induce are the centers $t_i$ of these disks. We 
        claim
        that, in fact, 
        each of the $t_i$'s is a ramification point.
        
        Denote the quotient map by 
        $$p:\hat{\Si}\to \hat{\Si}^{\hat{\phi}}.$$
        
        In order to prove the claim notice that the difference
        $\hat{\Si}\setminus\Si$ is homeomorphic to a closed surface with 
        $m+1$
        disks removed. On the other hand 
        the difference of quotient surfaces
        $\hat{\Si}^{\hat{\phi}}\setminus\Si^\phi$ is homeomorphic 
        to a
        cylinder. Since $m$ is strictly bigger than $1$, Hurwitz 
        formula for $p$ forces the existence of ramification points. Since $p$ 
        is a
        Galois cover each $t_i$ is a ramification point.
        
        The new boundary component of $\hat{\Si}^{\hat{\phi}}$ 
        corresponds
        to $p(D)$, where $D$ is invariant by $\hat{\phi}$. The point 
        $q_1:=p(t_i)$
        is then a branch point of $p$.
        
        We do this operation for every orbit of boundary components in $\Si$ of
        cardinality greater than $1$. Then we get a surface $\hat{\Si}$ 
        and an
        extension 
        $\hat{\phi}$ of $\phi$ that leaves all the boundary components 
        invariant.
        The quotient surface $\hat{\Si}^{\hat{\phi}}$
        is obtained from $\Si^\phi$ attaching some cylinders $\calC_j$ to some 
        boundary
        components. Let 
        $$p:\hat{\Si}\to\hat{\Si}^{\hat{\phi}}$$ 
        denote the quotient map. Comparing $p|_{\Si}$ and 
        $p|_{\hat{\Si}}$, we see
        that we have only one new branching point $q_j$ in every cylinder 
        $\calC_j$.
        
        Now we construct a \tat graph for $\hat{\phi}$ modifying slightly 
        the
        construction of \cite[Theorem 5.12]{jmp}.

        To fix ideas we consider the case in which the genus of the quotient
        $\hat{\Si}^{\hat{\phi}}$ is positive. The modification of 
        the genus
        $0$ case is exactly 
        the same. As in \cite[Theorem 5.12]{jmp} we use a planar representation 
        of 
        $\Si^\phi$ as a
        convex $4g$-gon in $\R^2$
        with $r$ disjoint open disks removed from its convex hull and whose 
        edges are
        labelled clockwise like 
        $a_1b_1a_1^{-1}b_1^{-1}a_2b_2a_2^{-1}b_2^{-1} \ldots
        a_{g}b_{g}a_{g}^{-1}b_{g}^{-1}$, we number the boundary components 
        $C_i\subset
        \partial\Si$, $1\leq i\leq r$, we
        denote by $d$ the arc $a_2b_2a_2^{-1}b_2^{-1} \ldots
        a_{g}b_{g}a_{g}^{-1}b_{g}^{-1}$, and we consider $l_1$,...,$l_{r-1}$ 
        arcs as in
        \Cref{fig:gen2}. We denote by 
        $c_1$,...,$c_{r}$ the edges in which $a_1^{-1}$ (and $a_1$) is 
        subdivided
        according to the component $p(C_i)$ they enclose. 
        
        We impose the further condition that each of the regions in which the 
        polygon is
        subdivided by the $l_i$'s encloses not only a component $p(C_i)$, but 
        also the
        branching 
        point $q_i$ that appears in the cylinder $\calC_i$. We assume that the 
        union of
        $d$, $a_1b_1a_1^{-1}b_1^{-1}$ and the $l_i$'s contains all the 
        branching points
        of $p$
        except the $q_i$'s.
        
        In order to be able to lift the retraction we need that the spine that 
        we draw
        in the quotient contains all branching points. In order to achieve this 
        we add
        an edge
        $s_i$ joining $q_i$ and some interior point $q'_i$ of $l_i$ for $i =1, 
        \ldots,
        r-1$ and joining $q_r$ with some interior point $q'_r$ of $l_{r-1}$. We 
        may
        assume that $q'_i$ is not a branching point. We consider the circle 
        $p(C_i)$ and
        ask $s_i$ to meet it transversely to it at only $1$ point. See 
        \Cref{fig:gen1}.
        We consider the graph $\G'$ as the union of the previous segments 
        and the $s_i$'s. Clearly the quotient surface retracts to it. Since it 
        contains
        all branching points its preimage $\hat{\G}$ is a spine for
        $\hat{\Si}$. It has 
        no univalent vertices since the $q_i$'s are branching points of a 
        Galois cover.
        
        \begin{figure}[!ht] 
            \centering
            \includegraphics[scale=0.65]{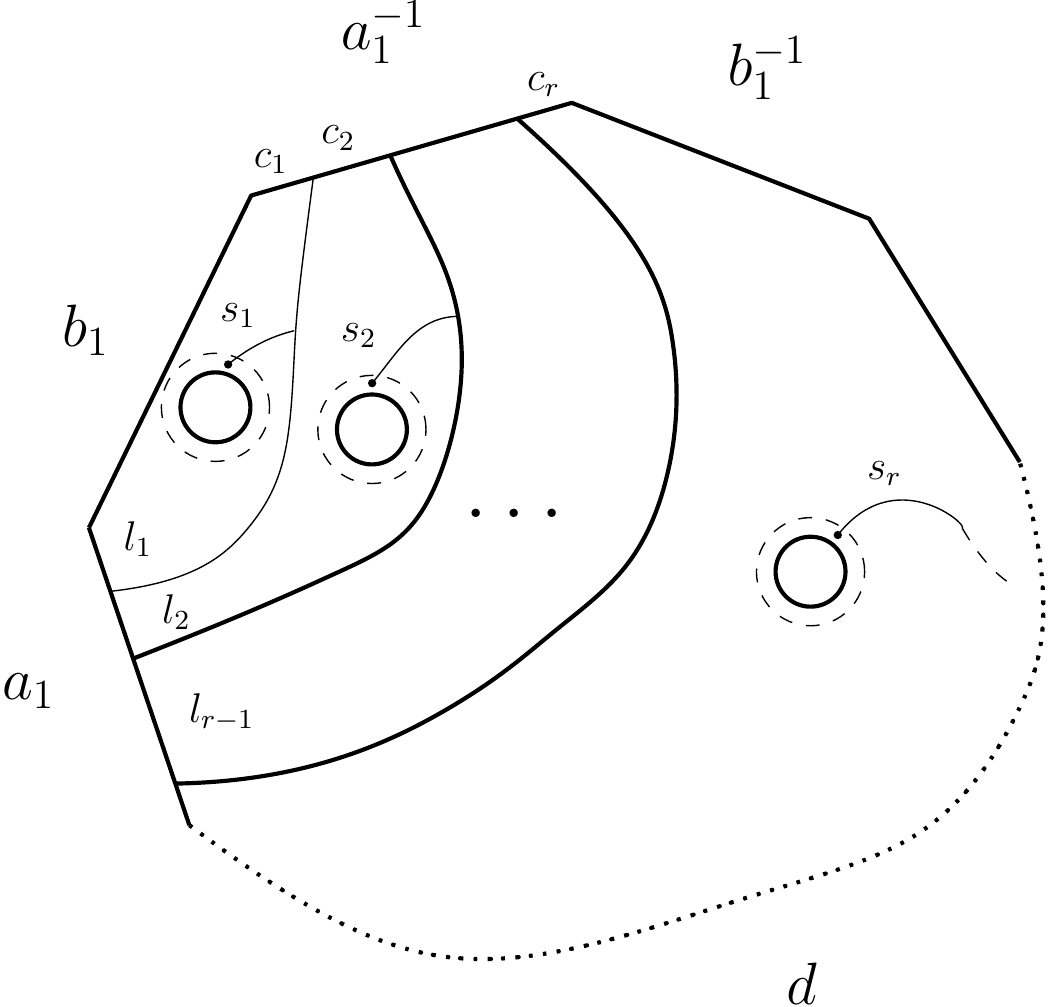}
            \includegraphics[scale=0.65]{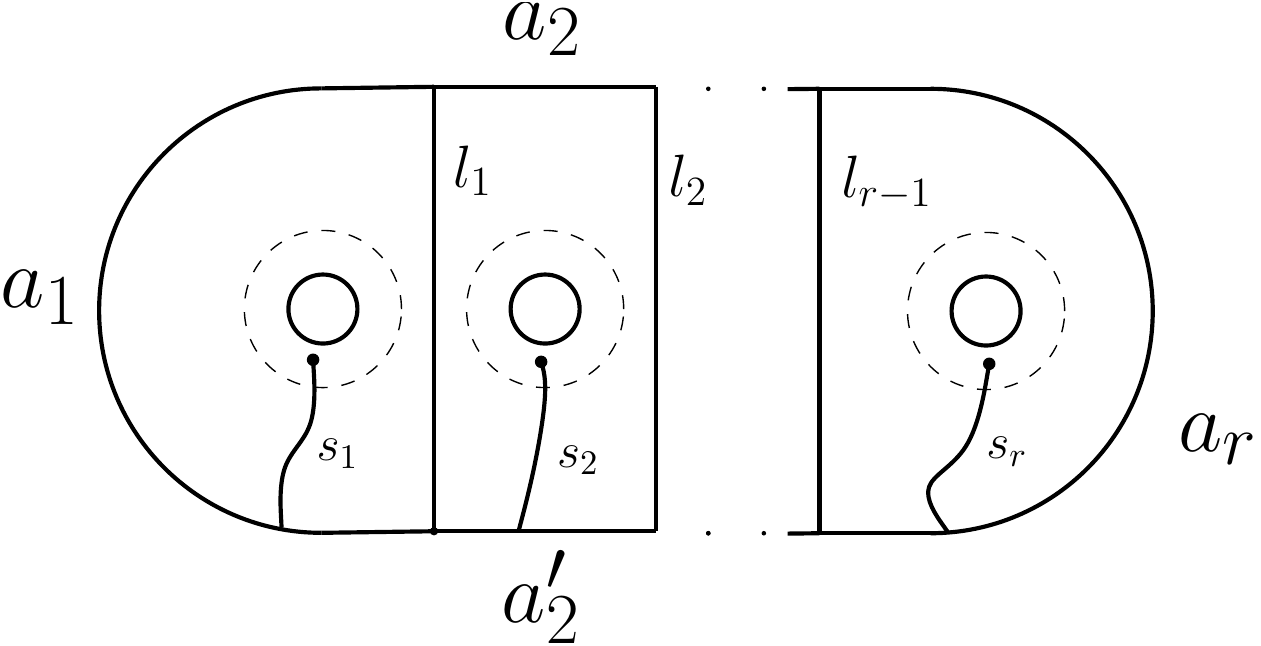}
            \caption{Drawing of $\G'$ for the case
                $genus(\hat{\Si}^{\hat{\phi}})\geq 1$ in the firts 
                image and
                $genus(\hat{\Si}^{\hat{\phi}})=0$ in the second.}
            \label{fig:gen2}
        \end{figure}
        
        \begin{figure}[!ht] 
            \centering
            \includegraphics[scale=0.7]{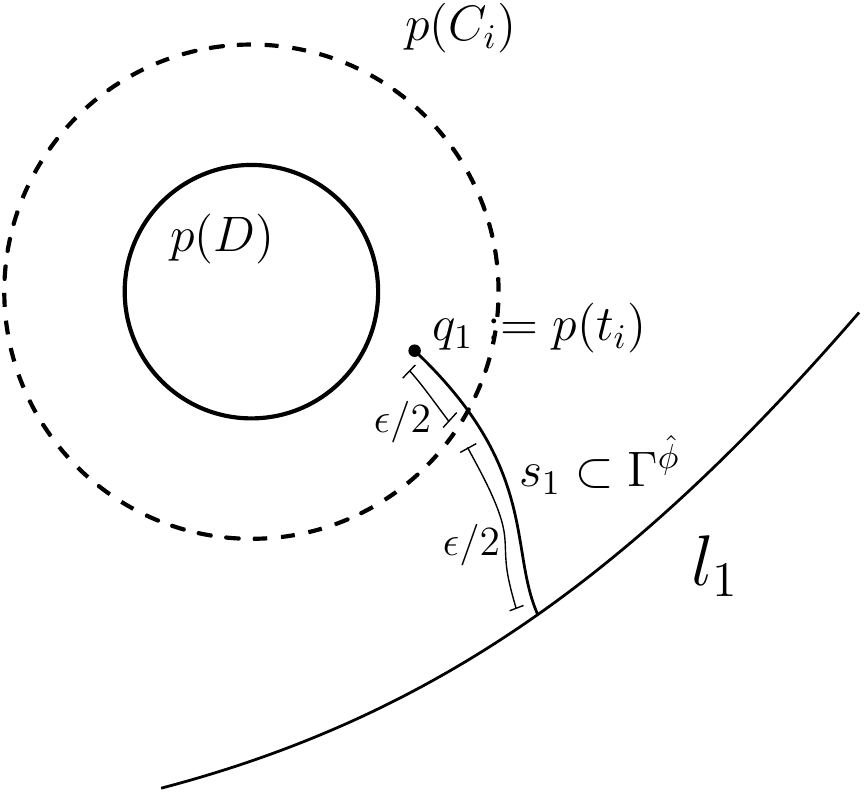}
            \caption{Neighbourhood of $q_1=p(t_i)$ in 
            $\hat{\Si}^{\hat{\phi}}$
                and edge $s_1$ joining $q_1$ and $l_1$.}
            \label{fig:gen1}
        \end{figure}
        
        In order to give a metric in the graph we proceed as follows. We give 
        the segments $d$ and $C_i$'s the same length they had in the proof of
        \cite[Theorem 5.12]{jmp}. We  impose every $s_i$ to have length some 
        small enough $\epsilon$ and the part of $s_i$ inside the cylinder 
        $\calC_i$ to have length $\epsilon/2$ (see \Cref{fig:gen1}). We give 
        each segment $l_i$ length $L-2\epsilon$.  It is easy to check that the 
        preimage graph $\hat{\G}$ with the pullback metric is \tat. 
        
        Now we consider the graph $\G:=\hat{\G}\cap \Si$ with the 
        restriction
        metric, except on the edges meeting $\partial\Si$ whose length is 
        redefined to be $\epsilon$. Along the lines of the proof of 
        \cite[Theorem 5.12]{jmp} we get that 
        $\phi_{(\G, \mathcal{P}, \sigma)}$ and $\phi$ are isotopic and 
        conjugate. If we denote by $\mathcal{P}$ the set of univalent vertices 
        of $\G$, it is an immediate consequence of the construction that 
        $(\G,\mathcal{P},\sigma)$ with the obvious permutation $\sigma$ of 
        $\mathcal{P}$ is a general \tat graph with $\phi_{(\G, \mathcal{P},
        \sigma)}|_{\G}=\phi|_{\G}$.
    \end{proof}
    
    \begin{example}
        We show an example that illustrates these ideas. Let $\Si$ be surface 
        of genus
        $1$ and $3$ boundary components $C_0,C_1,C_2$ embedded in $\R^3$ as in 
        the
        picture \ref{fig:examplestar}. Let 
        $\phi:\Si \rightarrow \Si$ be the restriction of the space rotation of 
        order $3$
        that exchanges the $3$  boundary components. We observe that in 
        particular
        $\phi^{3}|_{C_i}=id$ for $i=0,1,2$.
        
        We consider the star-shaped piece $S$ with $3$ arms together with the 
        order $3$
        rotation $r$ that exchanges the arms (see the picture 
        \Cref{fig:examplestar}).
        
        \begin{figure}[!ht]
            \centering
            \includegraphics[scale=0.6]{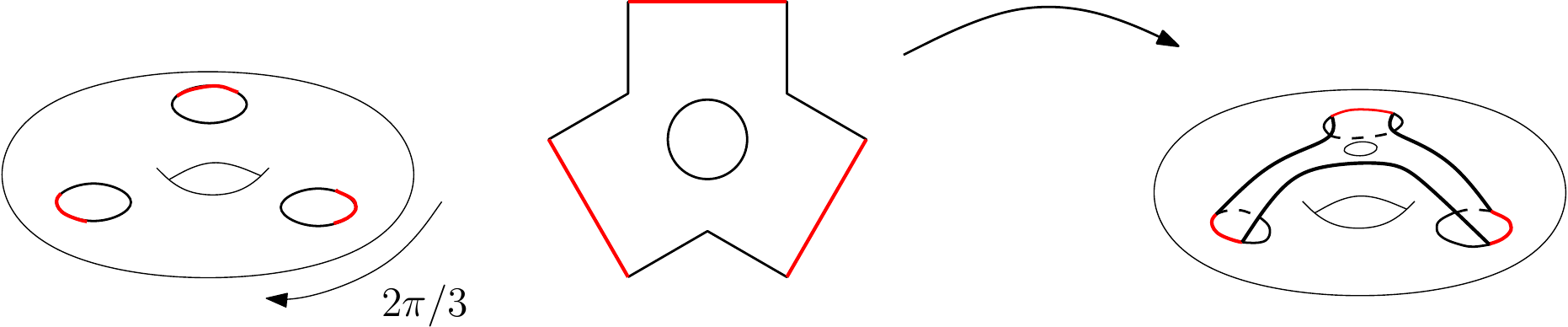}
            \caption{On the left, the torus $\Si$ with $3$ disks removed and 
            the orbit of an
                arc marked, that is, $3$ arcs in red. In the center, the 
                star-shaped piece $S$
                with $3$ arms to be glued to 
                the torus along those arcs. On the right, the surface we get 
                after gluing, with
                $2$ boundary components, one of them 
                invariant by the induced homeomorphism. 
            }
            \label{fig:examplestar}
        \end{figure}
        
        We glue $S$ to $\Si$ as the theorem indicates: we mark a small arc 
        $\alpha^0
        \subset C_2$ and all its iterated images by the rotation. 
        Then we glue $\alpha^0, \alpha^1, \alpha^2$ to $a^0,a^1,a^2$ 
        respectively by
        orientation reversing homeomorphisms. We get a new surface 
        $\hat{\Si}:=\Si \cup
        S$  
        with $2$ boundary components. We cap the boundary component that 
        intersects $C_0
        \cup C_1 \cup C_2$ with a disk $D^2$ and extend the homeomorphism to the
        interior of 
        the disk getting a new surface $\hat{\Si}$ and a homeomorphism
        $\hat{\phi}$. 
        
        Using Hurwitz formula $2-2g-4=0-2$ we get that the surface we are 
        gluing to
        $\Si$ has genus $0$ and hence it is a sphere with $4$ boundary 
        components. 
        See picture \Cref{fig:examplestar2}. Three of them are identified with 
        $C_0,
        C_1, C_2$, and the 4-th is called $C$ and is the only boundary 
        component of 
        $\hat{\Si}$.
        
        \begin{figure}[!ht]
            \centering
            \includegraphics[scale=0.5]{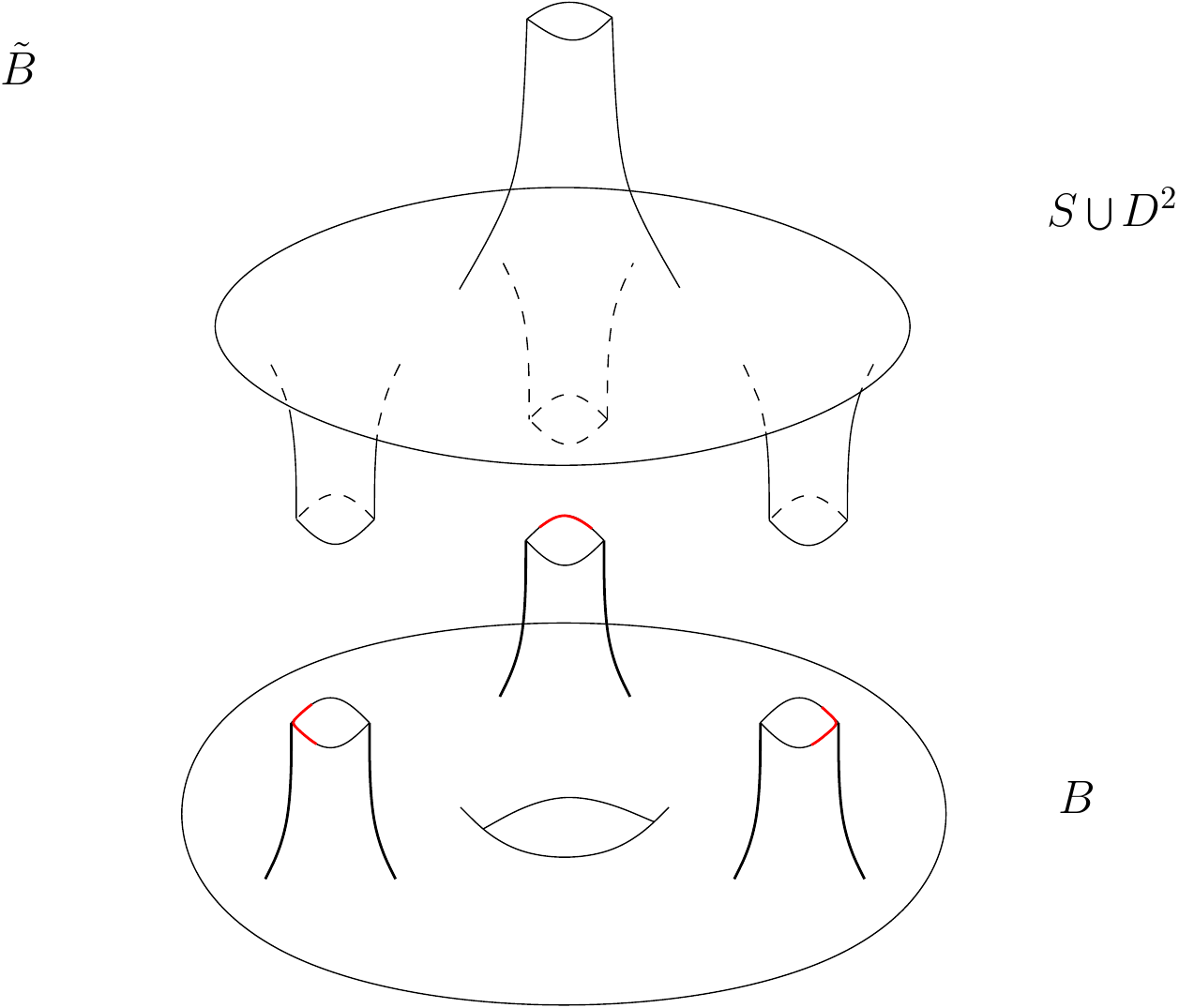}
            \caption{On the left, the torus with $3$ disks removed and $3$ the 
            orbit of an
                arc marked. On the right, the star-shaped piece with $3$ arms 
                to be glued to the
                torus along 
                those arcs.}
            \label{fig:examplestar2}
        \end{figure}
        
        We compute the orbit space $\hat{\Si}^{\hat{\phi}}$ by the 
        extended
        homeomorphism $\hat{\phi}$ and get a torus with $1$ boundary 
        component. 
        We consider the graph  $\G'$ as in picture~\Cref{fig:examplestar3}. We 
        put a
        metric in this graph. We set every edge of the hexagon to be $\pi/6 
        -\epsilon/3$
        long and the path joining 
        the hexagon with the branch point to be $\epsilon$ long. In this way, 
        if we look
        at the result of cutting $\hat{\Si}^{\hat{\phi}}$ along the 
        graph 
        $\hat{\phi}$ we see that the only boundary component that maps to 
        the
        graph by the gluing map has length $6 (\pi/6 -\epsilon/3)+2 \epsilon = 
        \pi$. 
        
        \begin{figure}[!ht]
            \centering
            \includegraphics[scale=0.5]{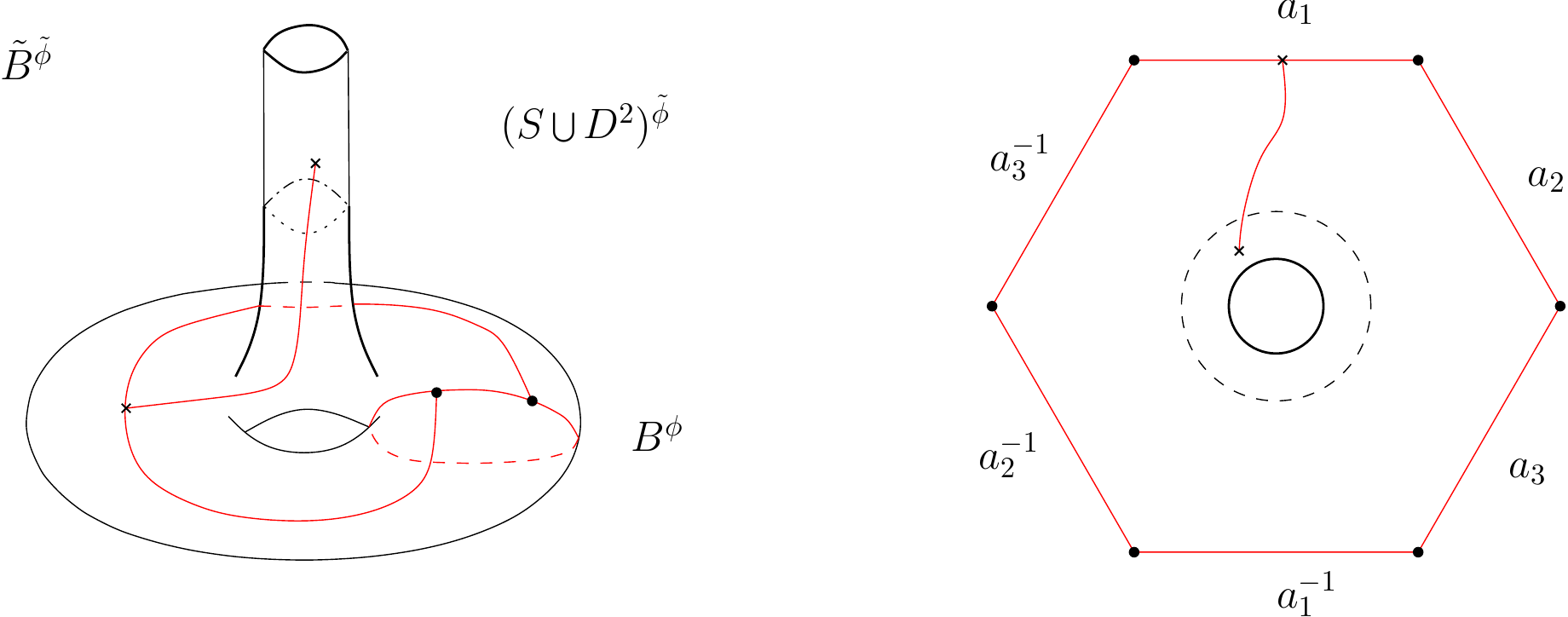}
            \caption{On the lower part we have the original surface. On the 
            upper part we
                have the surface that we attach, in this case a sphere with $4$ 
                holes removed.}
            \label{fig:examplestar3}
        \end{figure}
        
        The preimage $\hat{\G}$ of $\G'$ by the quotient map is a \tat 
        graph whose
        thickening is $\hat\hat{\Si}$. Its associated homeomorphism 
        $\hat{\phi}$ 
        leaves $\Si$ invariant and its restriction to it coincides with the 
        rotation
        $\phi$. Moreover 
        $(\hat{\G}\cap\Si,\hat{\G}\cap\partial\Si)$ is a
        general spine of 
        $(\Si,\partial\Si)$. Modifying the induced metric in 
        $\hat{\G}\cap\Si$ as
        in the proof of the Theorem and adding the order $3$ cyclic permutation 
        to the
        valency 
        $1$ vertices we obtain a \tat graph whose associated homeomorphism 
        equals
        $\phi$.
        
        \begin{figure}[H]
            \centering
            \includegraphics[scale=0.4]{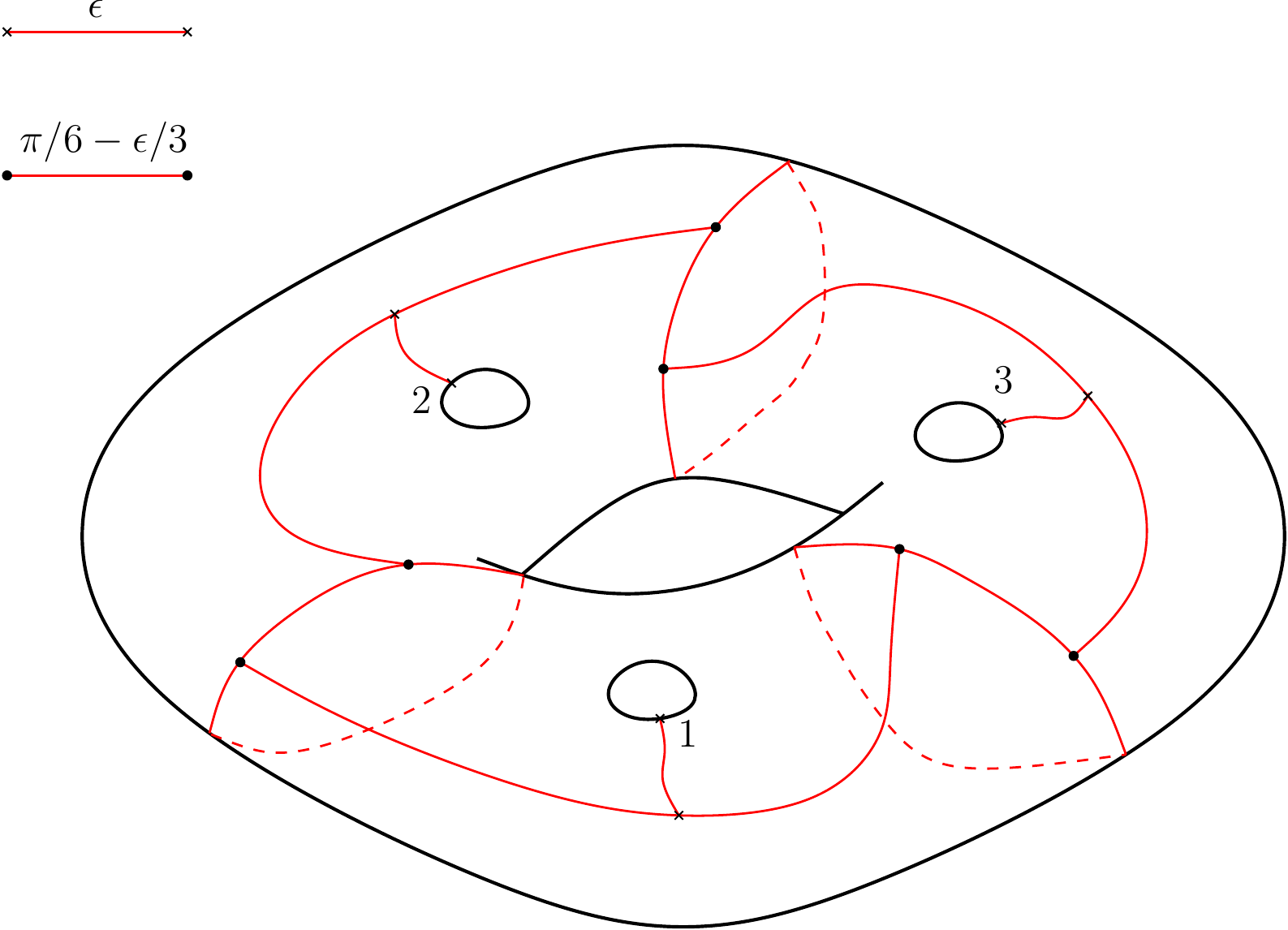}
            \caption{On the left, the torus with $3$ disks removed and $3$ the 
            orbit of an
                arc marked. On the right, the star-shaped piece with $3$ arms 
                to be glued to the
                torus along
                those arcs.}
            \label{fig:examplestar4}
        \end{figure}
    \end{example}
    
    \section{Seifert manifolds and plumbing graphs}\label{sec:seifert_man}
    
    In this subsection we recall some theory about Seifert manifolds and 
    plumbing graphs and fix the conventions used in this work. For more on this 
    topic, see \cite{Neum}, 
    \cite{Neum2}, \cite{Neum3}, \cite{Neum5}, \cite{Hir}, \cite{Hatch} or 
    \cite{Peder}. In many aspects we follow \cite{Peder}.
    
    \subsection{Seifert manifolds}
    
    Let $p,q\in \Z$ with $q >0$ and $\gcd(p,q)=1$. Let $D^2 \times [0,1]$ be a 
    solid
    cylinder. We consider the natural orientation on $D^2 \times [0,1]$.
    
    Let $(t, \theta)$ be polar coordinates for $D^2$. Let $r_{p/q}: D^2 
    \rightarrow
    D^2$ be the rotation $(t, \theta) \mapsto (t, \theta + 2\pi p/q)$. Let 
    $T_{p,q}$
    be the mapping torus of $D^2$ induced by the rotation $r_{p/q}$, that is, 
    the
    quotient space $$\frac{D^2 \times [0,1]} {(t,\theta,1)\sim (t,
        r_{p/q}(\theta),0)}.$$ If $p,p' \in \Z$ with $p \equiv p' \mod q$, then 
        the
    rotations $r_{p/q}$ and $r_{p'/q}$ are exactly the same map so
    $T_{p',q}=T_{p,q}$. The resulting space is diffeomorphic to a solid torus
    naturally foliated by circles which we call {\em fibers}. We call this 
    space a
    solid $(p,q)$-torus or a solid torus of type $(p,q)$. It has an orientation
    induced from the orientation of $D^2 \times [0,1] \subset \R^3$. The torus
    $\partial T_{p,q}$ is oriented as boundary of $T_{p,q}$.
    
    We call the image of $\{ (0,0) \} \times [0,1] \subset D^2 \times [0,1]$ in
    $T_{p,q}$ \textit{the central fiber} . We say that $q$ is the {\em 
    multiplicity}
    of the central fiber. If $q=1$ we call the central fiber a {\em typical 
    fiber}
    and if $q>1$ we call the central fiber a {\em special fiber}. Also any other
    fiber than the central fiber is called a typical fiber.
    
    If $a$ and $b$ are two closed curves in $\partial T_{p,q}$, let $[a] 
    \cdot[b]$
    denote the oriented intersection number of their classes in $H_1(\partial 
    T_{p,q}; \Z)$. 
    We
    describe $4$ classes of simple closed curves on $H_{1}(\partial T_{p,q}, 
    \Z)$:
    
    \begin{enumerate}
        \item A {\em meridian} curve $m:= \partial D^2 \times \{y\}$. We orient 
        it as boundary of $D^2 \times \{y\}$.
        
        \item A {\em fiber} $f$ on the boundary $\partial T_{p,q}$. We orient 
        it so that
        the radial projection on the central fiber is orientation preserving. It
        satisfies that $[m] \cdot [f] = q$.
        
        \item A {\em longitude} $l$ is a curve such that $[l]$ is a generator
        of  $H_1(T_{p,q}; \Z)$ and $[m] \cdot [l] 
        = 1$.
        
        \item A {\em section} $s$. That is a closed curve that intersects each 
        fiber
        exactly once. It is well defined up integral multiples of $f$. It is 
        oriented so
        that $[s] \cdot [f] = -1$. 
    \end{enumerate}

    \begin{figure}[ht]
        \centering
        \includegraphics[scale=0.3]{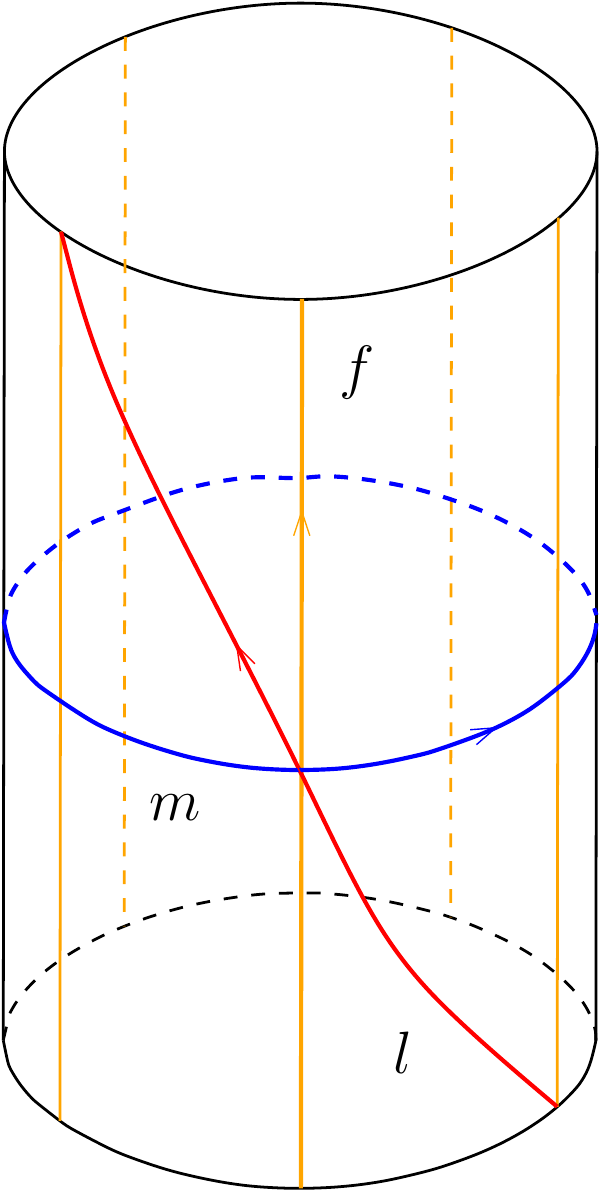}
        \caption{A torus $T_{2,5}$ with some closed curves marked on 
        its boundary. In orange a fiber $f$, in blue a meridian $m$ and in red 
        a longitude $l$.}
        \label{fig:pqtorus}
    \end{figure}

    We have defined two basis of the homology of $\partial T_{p,q}$, so we have 
    that
    there must exist unique $a, b \in \Z$ such that the equation
    \begin{equation}\label{eq:pqtorus}
    ([s] [f]) = ([m] [l]) \left( \begin{matrix} a&p\\ b& q \end{matrix} \right)
    \end{equation}
    holds in $H_1(\partial T_{p,q}; \Z)$. The matrix is nothing but a change of 
    basis. 
    
    The matrix of the equation has
    determinant $-1$ because $[s],[f]$ is a negative basis in the homology 
    group  
    $H_1(\partial T_{p,q}; \Z)$ . Therefore $bp \equiv 1 \mod q$. Changing the 
    class $[s]$ by adding integer multiples of $[f]$ to it, changes $b$ by 
    integer multiples of
    $q$. 
    
    We now fix conventions on Seifert manifolds. Let $B'$ be an oriented 
    surface of genus 
    $g$ and $r+k$ boundary components, $M':=B' \times \MS^1$  and  $s':M' 
    \times \MS^1 \rightarrow 
    B'$ the projection onto $B'$.   Let  $(\alpha_1, \beta_1), 
    \ldots, (\alpha_k, \beta_k)$ be $k$ pairs of integers with $\alpha_i > 0$ 
    for all $i=1, \ldots, k$.  Let $ N_1, \ldots N_k$ 
    be $k$ boundary tori on $M'$. On each of them consider the following two 
    curves $s_i := B' \times \{0\} \cap N_i$ and any fiber $f_i \subset N_i$. 
    Orient them so that $\{[s_i], [f_i]\}$ is a positive basis of $N_i$ as 
    boundary 
    of $M'$.  For each $i$, consider a solid torus $T_i = D^2 \times 
    \MS^1$ and the curves $m= \partial D^2 \times 
    \{0\}$ and $l:= \{pt\} \times \MS^1$ oriented so that $\{[m],[l]\}$ is a 
    positive basis of $T_i$. Attach $T_i$ to $N_i$ along its boundary by
     $$\left(
    \begin{smallmatrix} -\alpha_i & c \\ -\beta_i & d \end{smallmatrix} 
    \right):\partial T_i \rightarrow  N_i$$
    with respect to the two given basis. The numbers $c$ and $d$ are integers 
    such that the matrix has determinant $-1$. Note that, since the first 
    column defines the attaching of the meridian, the gluing is well defined up 
    to isotopy.
    
    The foliation on $N_i$ extends to all $T_i$ and gives it a structure of a 
    fibered solid torus. After gluing and extending the foliation to all $k$ 
    tori, we get a manifold $M$ and a collapsing map $s:M \to B$ where $B$ is 
    the surface of genus $g$ and $r$ boundary components.
    
    If a manifold $M$ can be constructed like this, we say that it is a {\em 
    Seifert  manifold} and the map $s:M \to B$ is a {\em Seifert fibering} for 
    $M$. We denote the resulting manifold after gluing $k$ tori by $$M(g,r, 
    (\alpha_1, \beta_1), \ldots, (\alpha_k, \beta_k)).$$
    
    Each pair $(\alpha_i, \beta_i)$ is called {\em Seifert pair} and we say 
    that it is normalized when $0 \leq \beta_i < \alpha_i$.

    We have, by definition and the discussion above, the following lemma and
    corollary.
    
    \begin{lemma}\label{lem:seifpq}
        Let $M \to B$ be a Seifert fibering. If a fiber $f$ has a neighborhood 
        diffeomorphic to a $(p,q)$-solid torus, then the there exists $b\in \Z$ 
        such that the (possibly unnormalized) Seifert invariant corresponding 
        to $f$ is $(q,-b)$ with $bp \equiv 1 \mod q$. Conversely, the special 
        fiber $f$ corresponding  to a Seifert pair $(\alpha, \beta)$ has a 
        neighborhood diffeomorphic as a  circle bundle to a 
        $(-c,\alpha_i)$-solid torus with $c \beta \equiv 1 \mod  \alpha$.
    \end{lemma}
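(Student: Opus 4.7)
The plan is to prove both directions simultaneously by translating the Seifert data and the $(p,q)$-torus data into the common language of a change of basis in $H_1$ of the boundary torus, and then comparing coefficients. First I would use \Cref{eq:pqtorus} to read off that the fiber class on $\partial T_{p,q}$ is $[f]=p[m]+q[l]$ and that $aq-bp=-1$, hence $bp\equiv 1\pmod q$. Next I would unpack the Seifert construction: the attaching matrix $\bigl(\begin{smallmatrix} -\alpha & c \\ -\beta & d \end{smallmatrix}\bigr)$ sends $[m]\mapsto -\alpha[s_i]-\beta[f_i]$ and $[l]\mapsto c[s_i]+d[f_i]$, with $\alpha d-\beta c=1$. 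Because the circle foliation on $T_i$ extends the one on $N_i$, the attaching diffeomorphism must carry the fiber class of $\partial T_i$ (viewed in the $(p,q)$-torus) to the fiber class $[f_i]$ of $N_i$.

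Writing the unknown fiber class as $x[m]+y[l]$ on $\partial T_i$ and imposing that its image equals $[f_i]$ produces the linear system $-x\alpha+yc=0$ and $-x\beta+yd=1$. Using the determinant relation $\alpha d-\beta c=1$ this solves uniquely to $(x,y)=(c,\alpha)$, so $[f]=\pm c[m]+\alpha[l]$, the overall sign tracking the comparison between the orientation of the central fiber of $T_{p,q}$ and the orientation of the Seifert fiber inherited from $B'\times\MS^1$. Matching $[f]=p[m]+q[l]$ then gives $q=\alpha$ and $p=-c$, proving the converse statement; the congruence $c\beta\equiv 1\pmod{\alpha}$ is just a rewriting of $\alpha d-\beta c=1$.

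For the forward direction I would run the same identification backwards. Given a $(p,q)$-neighborhood of $f$, its Seifert invariant $(\alpha,\beta)$ must satisfy $\alpha=q$ and $c\equiv -p\pmod q$. Combining $bp\equiv 1\pmod q$ with $\beta c\equiv -1\pmod\alpha$ (both obtained from the respective determinant relations) yields $b\equiv -\beta\pmod q$, so the Seifert pair is precisely $(q,-b)$ up to the usual $\Z/q$-ambiguity corresponding to changing the section $[s]$ by integer multiples of $[f]$.

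The main obstacle I expect is not the linear algebra, which is essentially two equations in two unknowns, but rather keeping the orientation conventions straight. The $(p,q)$-torus orients $[f]$ via radial projection onto the central fiber, whereas the Seifert $[f_i]$ is oriented so that $\{[s_i],[f_i]\}$ is a positive basis of $N_i$ as boundary of $M'$; checking carefully that these two orientations differ in exactly the way that produces $-c$ rather than $c$ in the statement is the only delicate point, and is what determines whether the sign ambiguity in the computation resolves in agreement with the lemma as written.
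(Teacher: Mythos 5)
Your approach---pull the fiber class $[f_i]$ back through the attaching matrix by solving the $2\times 2$ linear system and then compare against the $T_{p,q}$ normal form from \Cref{eq:pqtorus}---is exactly the right way to make explicit what the paper leaves implicit (it offers no written proof; the lemma is asserted ``by definition and the discussion above,'' which is precisely the material you unpack). The linear algebra is correct: with $\alpha d-\beta c=1$ the unique solution is $(x,y)=(c,\alpha)$, so the fiber class on $\partial T_i$ is $c[m]+\alpha[l]$, where $c$ here denotes the $(1,2)$-entry of the attaching matrix.

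The one place your argument goes astray is the sign diagnosis. You obtain $(c,\alpha)$ and then hedge to $\pm c$, attributing the discrepancy with the lemma's ``$(-c,\alpha)$'' to an orientation mismatch between the central fiber of $T_{p,q}$ and the fiber of $B'\times\MS^1$. That is not what is happening. Once the orientation conventions of the paper are fixed (both $\{[m],[l]\}$ bases satisfy $[m]\cdot[l]=1$, and the attaching map has determinant $-1$, hence is orientation-reversing on the boundary tori as it must be), the computation gives $p\equiv c_{\text{attach}}\pmod{\alpha}$ with no residual sign. The minus sign in the lemma is a notational artifact: the lemma's symbol $c$ is \emph{defined} by $c\beta\equiv 1\pmod\alpha$, whereas the attaching matrix's entry $c_{\text{attach}}$ satisfies $\beta c_{\text{attach}}\equiv -1\pmod\alpha$ (from $\alpha d-\beta c_{\text{attach}}=1$). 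These two numbers are negatives of one another mod $\alpha$, and that is the whole content of the ``$-c$.'' Relatedly, your assertion that ``$c\beta\equiv 1\pmod\alpha$ is just a rewriting of $\alpha d-\beta c=1$'' has the wrong sign, and the line ``$c\equiv -p\pmod q$'' should read $c_{\text{attach}}\equiv p\pmod q$. With these corrections your forward-direction deduction $b\equiv -\beta\pmod q$ goes through as you wrote it, and the proposal reconstructs the lemma correctly; the only genuine flaw is the misattribution of the sign to orientations rather than to a reuse of the letter $c$.
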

    
    \begin{corollary}\label{cor:rotation_seifertinv}
        Let $\phi: \Si \rightarrow \Si$ be an orientation preserving periodic
        automorphism of a surface $\Si$ of order $n$ and let $\Si_\phi$ be the
        corresponding mapping torus. Let $x\in \Si$ be a point whose isotropy 
        group in the group $<\phi>$ has order $k$ with $n=k \cdot s$. Then 
        $\phi^{s}$ acts as a rotation in a disk around $x$ with rotation number 
        $p/k$ for some $p  \in \Z$ and the (possibly unnormalized) Seifert pair 
        of $M_\phi$ corresponding to the fiber passing through $x$ is $(k,-b)$ 
        with $bp \equiv 1 \mod k$.
    \end{corollary}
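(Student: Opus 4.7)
The plan is to reduce to \Cref{lem:seifpq} by showing that a suitable fibered neighbourhood of the fiber through $x$ in $M_\phi$ is diffeomorphic to a $(p,k)$-solid torus, whence the stated Seifert pair will follow automatically.

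First I would establish the local normal form for $\phi^s$. Since $\phi^s$ is an orientation-preserving periodic homeomorphism of $\Si$ of exact order $k$ that fixes $x$, a classical theorem (Brouwer, or Eilenberg's theorem on periodic surface homeomorphisms; one could also linearize after smoothing by averaging a metric) shows that $\phi^s$ is topologically conjugate near $x$ to a rotation $r_{p/k}$ for some integer $p$; since the order is exactly $k$, one has $\gcd(p,k)=1$, and $p$ is the rotation number as defined in the statement.

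Next I would build an explicit tubular neighbourhood of the fiber of $M_\phi$ through $x$. Choose a small closed disk $D\ni x$ invariant under $\phi^s$ on which $\phi^s$ is the rotation $r_{p/k}$, small enough that the disks $D,\phi(D),\dots,\phi^{s-1}(D)$ are pairwise disjoint. In the mapping torus model $M_\phi=\Si\times[0,1]/(y,1)\sim(\phi(y),0)$, the saturation of $D$ under the flow and under $\phi$ is the image of $\bigsqcup_{i=0}^{s-1}\phi^i(D)\times[i/n,(i+1)/n]$. Via the identifications this is canonically diffeomorphic, as a fibered neighbourhood with its fiber structure, to the mapping torus
\[
\frac{D\times[0,1]}{(y,1)\sim (\phi^s(y),0)}\;=\;\frac{D\times[0,1]}{(y,1)\sim (r_{p/k}(y),0)},
\]
which is exactly $T_{p,k}$ by the definition fixed earlier in the section. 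The orientations match because $\phi$ is orientation preserving on $\Si$ and the $[0,1]$-direction is the natural orientation of the mapping torus, which is the same convention used for $D^2\times[0,1]$ in the definition of $T_{p,q}$. The central fiber of this $T_{p,k}$ is exactly the fiber of $M_\phi$ through $x$.

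Having identified the neighbourhood, I would conclude by invoking \Cref{lem:seifpq}: since the fiber has a neighbourhood diffeomorphic to a $(p,k)$-solid torus, its (possibly unnormalized) Seifert pair is $(k,-b)$ with $bp\equiv 1\pmod k$. The main subtlety I expect is not the topology but the bookkeeping of conventions: one must check that the diffeomorphism of the neighbourhood with $T_{p,k}$ is orientation-preserving and sends the mapping-torus fiber structure to the fibration of $T_{p,k}$ given in \Cref{sec:seifert_man}, so that the $p$ coming from the rotation of $\phi^s$ is indeed the same $p$ that appears in \Cref{lem:seifpq}; any sign error here would flip $b$ into $-b$ and contradict the normalization conventions set above.
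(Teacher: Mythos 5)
Your proposal is correct and follows essentially the same route as the paper: linearize $\phi^s$ near $x$ as the rotation $r_{p/k}$, identify the saturated neighbourhood of the fiber through $x$ in $M_\phi$ with the mapping torus $D_{\phi^s}\simeq T_{p,k}$, and then apply \Cref{lem:seifpq}. Your write-up is more careful about orientation conventions (and the interval subdivision in your saturation formula is slightly off, though inessential), but the argument is the same one the paper gives.
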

    
    \begin{proof}
        That $\phi^{s}$ acts as a rotation in a disk $D \subset \Si$ around $x$ 
        with
        rotation number $p/k$ for some $p \in \Z_{>0}$ follows from the fact 
        that $x$ is
        a fixed point for $\phi^{n/k}$. By construction of the mapping torus of 
        $\Si$ we
        observe that  the two mapping tori $M_{\phi|_D} \simeq  D_{\phi^{n/k}}$ 
        are
        diffeomorphic where $D$ is a small disk around $x$. By definition of 
        fibered
        torus we have that $ D_{\phi^{n/k}} \simeq T_{p,k}$. The rest follows 
        from
        \Cref{lem:seifpq} above.
    \end{proof}

    \subsection{Plumbing graphs}
    
    A plumbing graph is a decorated graph
    that encodes the information to recover the topology of a certain 
    $3$-manifold. As with Seifert manifolds, we fix notation and conventions.
    
    This is the decoration and its corresponding meaning:
    
    \begin{itemize}
        \item Each vertex corresponds to a circle bundle. It is decorated with 
        $2$ integers $e_i$ (placed on top) and $g_i$ placed on bottom. If a 
        vertex has valency $v_i$ consider the circle bundle over the surface of 
        genus $g_i$ and $v_i$ boundary components and pick a section on the 
        boundary so that the global Euler number is $e_i$. When $g$ is omitted 
        it is assymed to be $0$.
        \item Each edge  tells us that the circle bundles corresponding to the 
        ends of the edge are glued along a boundary torus by the gluing map 
        $J(x,y)=(y,x)$ defined with respect to $\mathrm{section} \times 
        \mathrm{fiber}$ on each boundary torus.
        \item An and ending in an arrowhead  represents that an open solid 
        torus is  removed from the corresponding circle bundle from where the 
        edge comes out.
    \end{itemize}
    
    The construction of the $3$-manifold associated to a plumbing graph is clear
    from the description of its decoration above.
    
    We point out a minor correction to an argument in \cite{Neum} and reprove 
    a  known lemma  which is crucial in \Cref{sec:algor} (see discussion 
    afterwards in \Cref{rem:fix_neum}).
    
    \begin{lemma}\label{lem:bamboo}
        Let $\Lambda$ be a plumbing graph.
        \begin{itemize}
            \item[1)] If a portion of $\Lambda$ has the following form:
            \begin{figure}[H]
                \centering
                \includegraphics[scale=0.6]{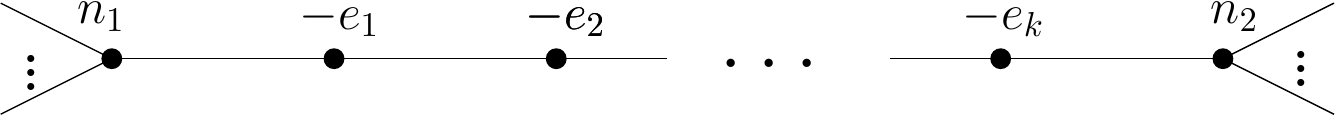}
                \label{fig:chain}
            \end{figure}
            Then the  piece corresponding to the  node $n_1$ is glued to the 
            piece
            corresponding to the node $n_2$ along a torus by the matrix 
            $G=\left(
            \begin{smallmatrix} a&b\\ c& d \end{smallmatrix} \right)$ with 
            $\det (G) =-1$
            and where $-b/a=[e_1,\ldots, e_k]$ with the numbers in brackets 
            being the
            continued fraction $$\frac{-b}{a}= e_{1} - \frac{1}{e_{2}- 
            \frac{1}{e_3-
                    \frac{1}{\ldots}}}.$$
            
            \item[2)] If a portion of $\Lambda$ has the following form:
            
            \begin{figure}[H]
                \centering
                \includegraphics[scale=0.6]{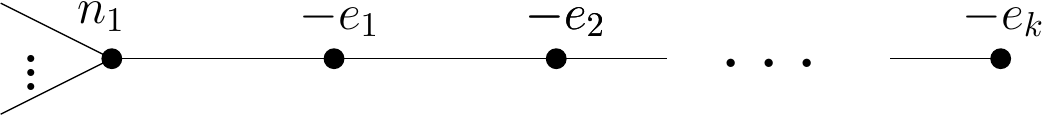}
                \label{fig:chain_1}
            \end{figure}
        \end{itemize}
       Then the piece corresponding to the node $n_i$ is glued along a torus to 
       the
   boundary of a solid torus $D^2 \times \MS^1$ by the matrix $\left(
   \begin{smallmatrix} a&b\\ c& d \end{smallmatrix} \right) $ with $-d/c=[e_1, 
   e_2,
   \ldots, e_k]$. 
\end{lemma}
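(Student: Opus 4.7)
The plan is to proceed by induction on the length $k$ of the chain, after first computing the ``transfer matrix'' across a single internal vertex of the bamboo. An internal node, being valency-$2$ with genus $0$, represents a circle bundle over an annulus with Euler number $e_i$; if $(s,f)$ and $(s',f')$ denote sections and fibers on its two boundary tori (oriented as boundary of the bundle), the defining property of the Euler number gives relations of the form $[s'] = -[s] - e_i[f]$ and $[f']=-[f]$ in the homology of the boundary tori. This is encoded by a matrix $A_{e_i}$ in the basis $(\text{section},\text{fiber})$. Each edge of the plumbing graph contributes the gluing $J(x,y)=(y,x)$, i.e.\ the matrix $J=\left(\begin{smallmatrix}0&1\\1&0\end{smallmatrix}\right)$ in the same basis.

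First I would compute the composite $M_{e_i} := A_{e_i}\cdot J$, which after absorbing a global sign into the choice of section on the far side gives $M_{e_i} = \left(\begin{smallmatrix}-e_i & -1\\ 1 & 0\end{smallmatrix}\right)$, a matrix of determinant $-1$. The total gluing between $n_1$ and $n_2$ is then the product $G = J\cdot M_{e_1}\cdot M_{e_2}\cdots M_{e_k}$ (or an orientation-corrected variant), which automatically has $\det G = -1$. Writing $G=\left(\begin{smallmatrix}a&b\\c&d\end{smallmatrix}\right)$ and peeling off the first factor, the inductive hypothesis gives $-b'/a'=[e_2,\ldots,e_k]$ for the shorter chain, and the matrix identity that results from multiplying by $M_{e_1}$ translates precisely to the recursion $-b/a = e_1 - 1/(-b'/a') = [e_1, e_2, \ldots, e_k]$. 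The base case $k=1$ is a direct inspection of $J\cdot A_{e_1}$.

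For part (2), the same chain product $G$ governs the gluing, but now one further identifies the last boundary torus with $\partial(D^2\times \MS^1)$ using the basis $(\text{meridian}, \text{longitude})$ instead of $(\text{section},\text{fiber})$. Since the meridian of the attached solid torus is what gets glued, the relevant ratio extracted from $G$ shifts from the first column (which tracked the section of $n_2$) to the second column (which tracks its meridian), and the resulting continued fraction identity reads $-d/c = [e_1,\ldots,e_k]$.

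The main obstacle is the careful bookkeeping of orientations for sections, fibers, and the two boundary tori at each node --- and in fact this is exactly where the paper advertises a correction to \cite{Neum}. A sign error in $A_{e_i}$ or in $J$ would transform $[e_1,\ldots,e_k]$ into a variant with alternating signs, or into its reciprocal; once a consistent orientation convention is fixed and the base case $k=1$ is verified under it, the inductive step is purely formal matrix algebra. I would therefore spend the bulk of the write-up pinning down conventions for $(s,f)$ on each side of an internal node and checking that the chosen $A_{e_i}$ really reproduces the global Euler number $e_i$ of that piece.
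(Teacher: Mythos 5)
Your strategy is essentially the paper's: decompose the bamboo into one piece per internal node plus one $J$ per edge, write each node as a $2\times 2$ transfer matrix in the $(\text{section},\text{fiber})$ basis, multiply, and recover the continued fraction by peeling off a factor and inducting on $k$. The paper does exactly this with explicit matrices $E_i=\left(\begin{smallmatrix}-1&0\\e_i&1\end{smallmatrix}\right)$, $J=\left(\begin{smallmatrix}0&1\\1&0\end{smallmatrix}\right)$, and an explicit orientation-correction factor $\left(\begin{smallmatrix}-1&0\\0&1\end{smallmatrix}\right)$ inserted on both sides of every $E_i$; you bundle these into a single $M_{e_i}$ per node. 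Your reading of part (2) as a change of basis at the far end is also the right picture (it amounts to dropping the final $J$ and orientation factor, which swaps which column of $G$ one reads).

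There is, however, a concrete sign error in the one place where you commit to a formula. You claim the transfer across a node encodes $[f']=-[f]$, but the fiber class is \emph{preserved} across the collar $A\times\MS^1$: it is only the \emph{section} that picks up a sign, because the radial projection between the two boundary tori of $A\times\MS^1$ is orientation reversing while the fiber direction is unchanged. This is exactly the content of the paper's correction to Neumann, and it is encoded by $\left(\begin{smallmatrix}-1&0\\0&1\end{smallmatrix}\right)$ — not by $-\mathrm{Id}$. Relatedly, your proposed $M_{e_i}=\left(\begin{smallmatrix}-e_i&-1\\1&0\end{smallmatrix}\right)$ in the product $G=J\,M_{e_1}\cdots M_{e_k}$ fails the base case: for $k=1$ it gives $G=\left(\begin{smallmatrix}1&0\\-e_1&-1\end{smallmatrix}\right)$, so $-b/a=0\neq e_1$. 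The paper's product instead reduces to $G=J\left(\begin{smallmatrix}0&-1\\1&-e_k\end{smallmatrix}\right)\cdots\left(\begin{smallmatrix}0&-1\\1&-e_1\end{smallmatrix}\right)$ (note the indices descend), which does give $-b/a=e_1$ at $k=1$. You correctly flag that ``pinning down conventions'' is the hard part, but since that bookkeeping is the entire point of this lemma — and you have committed to at least one wrong sign and one product that fails the base case — the proposal as written is not yet a proof.
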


    \begin{proof}
        Let $T:=D^2 \times \MS^1$ be a solid torus naturally foliated by 
        circles 
        by its
        product structure. Let $s$ be the closed curve $\partial D^2 \times 
        \{0\}$ and
        let $f$ be any fiber on the boundary of the solid torus. Orient them so 
        that
        $\{[s],[f]\}$ is a positive basis of $H_1(\partial T; \Z)$. If $T$, 
        $T'$ are two
        copies of the solid torus. Then $M_i$ is the $S^1$-bundle $ T 
        \sqcup_{E_i} T'$
        where $E_i: \partial T \rightarrow \partial T'$ is the matrix $$\left(
        \begin{matrix} -1&0\\ e_i& 1 \end{matrix} \right)$$ used in the gluing 
        along the
        boundaries. In particular $[f] = [f']$ in $H_1(M_i; \Z)$. The $-1$ in 
        the upper
        left part reflects the fact that $s$ inherits different orientations 
        from the
        two tori.

        We treat the case $1)$ first. If $M_i$ is the piece corresponding to 
        the node
        $n_i$ with $i=1,2$ we have that the gluing from $M_1$ to $M_2$ is
        
        $$M_1 \sqcup _ J (A \times \MS^1 \sqcup_{E_1} A \times \MS^1) 
        \sqcup_{J} 
        \cdots
        \sqcup _ J (A \times \MS^1 \sqcup_{E_k} A \times \MS^1) \sqcup_{J} M_2 
        $$
        
        Where $A \times \MS^1$ is the trivial circle bundle over the annlus
        $A:=[1/2,1] \times \MS^1$.  Let $(r,\theta)$ be polar 
        coordinates for $A$.  The  two tori forming the  boundary of  $A \times 
        \MS^1$ are oriented as boundaries 
        of $A \times \MS^1$. Observe that the map  $r((1/2, \theta) , \eta) = 
        ((1, \theta) ,\eta)$ is orientation reversing.
        
        We define $s= \{S_{1/2}^1\} \times \{0\}$ and $f= \{(1/2,0\} \times 
        \MS^1$ and
        orient them so that the ordered basis $\{[s],[f]\}$ is a  positive 
        basis for
        $H_1(S_{1/2}^1 \times \MS^1; \Z)$. We define similarly $s'=\{S_{1}^1\} 
        \times
        \{0\}$, $f'= \{(1,0\} \times \MS^1$ and orient them so that 
        $\{[s'].[f']\}$ is a
        positive basis for $H_1(S_{1}^1 \times \MS^1; \Z)$. Then the homology 
        classes
        $[r(s)]$ and $[r(f)]$ form a negative basis. In fact $[s']=-[r(s)]$ and
        $[f]=[r(f)]$. This is the reason of the matrices $\left( 
        \begin{smallmatrix}
        -1&0\\ 0&1 \end{smallmatrix} \right)$ in the \Cref{eq:first} below.
        
        So the  gluing matrix $G$ from a torus in the boundary of $M_1$ to a 
        torus in
        the boundary of $M_2$ is given by the following composition of matrices:
        \begin{equation}\label{eq:first}
        \begin{split}
        G &= \left( \begin{smallmatrix} 0&1\\ 1&0 \end{smallmatrix} \right)
        \left( \begin{smallmatrix} -1&0\\ 0&1 \end{smallmatrix} \right) 
        \left( \begin{smallmatrix} -1&0\\ e_k&1 \end{smallmatrix} \right)
        \left( \begin{smallmatrix} -1&0\\ 0&1 \end{smallmatrix} \right)
        \left( \begin{smallmatrix} 0&1\\ 1&0 \end{smallmatrix} \right)\cdots  
        \left( \begin{smallmatrix} 0&1\\ 1&0 \end{smallmatrix} \right)
        \left( \begin{smallmatrix} -1&0\\ 0&1 \end{smallmatrix} \right) 
        \left( \begin{smallmatrix} -1&0\\ e_1&1 \end{smallmatrix} \right)
        \left( \begin{smallmatrix} -1&0\\ 0&1 \end{smallmatrix} \right)
        \left( \begin{smallmatrix} 0&1\\ 1&0 \end{smallmatrix} \right) \\
        &=\left( \begin{smallmatrix} 0&1\\ 1&0 \end{smallmatrix} \right) 
        \left( \begin{smallmatrix} -1&0\\ -e_k&1 \end{smallmatrix} \right)
        \left( \begin{smallmatrix} 0&1\\ 1&0 \end{smallmatrix} \right)\cdots  
        \left( \begin{smallmatrix} 0&1\\ 1&0 \end{smallmatrix} \right) 
        \left( \begin{smallmatrix} -1&0\\ -e_1&1 \end{smallmatrix} \right)
        \left( \begin{smallmatrix} 0&1\\ 1&0 \end{smallmatrix} \right) \\
        &=\left( \begin{smallmatrix} 0&1\\ 1&0 \end{smallmatrix} \right)
        \left( \begin{smallmatrix} 0&-1\\ 1&-e_k \end{smallmatrix} \right) 
        \cdots 
        \left( \begin{smallmatrix} 0&-1\\ 1&-e_1 \end{smallmatrix} \right)
        \end{split}
        \end{equation}
        
        Observe that each matrix in the definition of $G$ has determinant $-1$ 
        so $\det(G)=-1$ because there is an odd number. Hence $G$ inverts 
        orientation on the boundary tori, preserving the orientation on the 
        global $3$ manifoldd. The result about the continued fraction follows 
        easily by induction on $k$.
        
        Now we treat similarly the case $2)$. The gluing from a boundary torus 
        from $M_1$ to $\partial D^2 \times \MS^1$ is
        $$
        M_1 \sqcup _ J (A \times \MS^1 \sqcup_{E_1} A \times \MS^1) 
        \sqcup_{J} 
        \cdots
        \sqcup _ J (A \times \MS^1 \sqcup_{E_k} D^2 \times \MS^1). 
        $$
        
        Hence, by a similar argument to the previous case, the matrix that 
        defines the
        gluing is
        \begin{equation}
        \begin{split}
        G &= \left( \begin{smallmatrix} -1&0\\ e_k&1 \end{smallmatrix} 
        \right)\left(
        \begin{smallmatrix} -1&0\\ 0&1 \end{smallmatrix} \right)\left(
        \begin{smallmatrix} 0&1\\ 1&0 \end{smallmatrix} \right)\cdots  \left(
        \begin{smallmatrix} 0&1\\ 1&0 \end{smallmatrix} \right)\left(
        \begin{smallmatrix} -1&0\\ 0&1 \end{smallmatrix} \right) \left(
        \begin{smallmatrix} -1&0\\ e_1&1 \end{smallmatrix} \right)\left(
        \begin{smallmatrix} -1&0\\ 0&1 \end{smallmatrix} \right)\left(
        \begin{smallmatrix} 0&1\\ 1&0 \end{smallmatrix} \right) \\
        &= \left( \begin{smallmatrix} -1&0\\ e_k&1 \end{smallmatrix} \right) 
        \left(
        \begin{smallmatrix} -1&0\\ 0&1 \end{smallmatrix} \right) \left(
        \begin{smallmatrix} 0&1\\ 1&0 \end{smallmatrix} \right) \left(
        \begin{smallmatrix} 0&-1\\ 1&-e_{k-1} \end{smallmatrix} \right)  \cdots 
        \left(
        \begin{smallmatrix} 0&-1\\ 1&-e_1 \end{smallmatrix} \right) \\
        &= \left( \begin{smallmatrix} 0&1\\ 1&-e_k \end{smallmatrix} \right) 
        \left(
        \begin{smallmatrix} 0&-1\\ 1&-e_{k-1} \end{smallmatrix} \right)  \cdots 
        \left(
        \begin{smallmatrix} 0&-1\\ 1&-e_1 \end{smallmatrix} \right)
        \end{split}
        \end{equation}
        
        By the expression in the last line we see that all matrices involved 
        but the one
        on the left, have determinant $1$ so we get $det(G)=-1$. Again, by 
        induction on
        $k$ the result on the continued fraction follows straight from the las 
        line.
    \end{proof}
    
    \begin{remark}\label{rem:fix_neum}
        Note the differences of the  Lemma above with Lemma 5.2 and the 
        discussion before it in  \cite{Neum}: 
        there the author  does not observe that in each piece $A 
        \times \MS^1$, the natural projection from one boundary torus to the 
        other is orientation reversing. So the matrices $\left( 
        \begin{smallmatrix} -1&0\\ 0&1 \end{smallmatrix} \right)$ are not taken 
        into account there. 
        
         In a more extended manner. The problem is with the claim that the 
         matrix $C$ (in equation $(\ast)$ 
         pg.$319$ of \cite{Neum}) is the gluing matrix.  The equation above 
         equation $(\ast)$ in that page, describes the gluing between the two 
         boundary tori as 
         a concatenated gluing of several pieces. In particular you glue a 
         piece of the form  $A \times S^1$ with another piece of the same form 
         using the matrix $H_k$ and then you glue these pieces a long $J$-
         matrices. 
         Then it is claimed that ``since $A \times S^1$ is a collar'' then the 
         gluing matrix (up to a sign) is $J H_k J \cdots J H_1 J$.  However, 
         notice that each piece $A \times S^1$ has two boundary tori, 
        and they inherit "opposite" orientations. More concretely, the natural 
        radial projection from one boundary torus to the other is orientation 
        reversing. So even, if they are a collar (which they are), they 
        interfere somehow in the gluing. That is why we add the matrices 
        $\left( 
        \begin{smallmatrix} -1&0\\ 0&1 \end{smallmatrix} \right)$ between each 
        $J$ and each $H_k$ matrix.
    \end{remark}

    \section{Horizontal surfaces in Seifert manifolds}\label{sec:hor_sur}
    
    In this section, we study and classify horizontal surfaces of Seifert 
    fiberings up to isotopy. The results contained here are known. The 
    exposition that we choose to do here is 
    useful for \Cref{sec:algor}.
    
    We recall that we are only considering Seifert manifolds that are orientable
    with orientable base space and with non-empty boundary. Horizontal surfaces 
    in a orientable Seifert manifold with orientable base 
    space are always orientable (see for example Lemma 3.1 in \cite{Zull}). So 
    by our assumptions only orientable horizontal surfaces appear. Let $F$ be 
    any  fiber of the Seifert fibering $M \to B$. 
    
    \begin{definition}
        Let $H$ be a surface with non-empty boundary which is properly embedded 
        in $M$ i.e. $H \cap \partial M = \partial H$. We say that $H$ is a {
        \em horizontal surface} of $M$ if it is transverse to all the fibers of 
        $M$. 
    \end{definition}

    \begin{definition}\label{def:iso_equiv}
        Let $H(M)$ be the set of all horizontal surfaces of $M$, we define
        $$\mathcal{H}(M):= H(M) / \sim$$ where two elements $H_1, H_2 \in H(M)$ 
        are related $H_1 \sim H_2$ if their inclusion maps are isotopic.
    \end{definition}

    Let $n:= \lcm(\alpha_1, \ldots, \alpha_k)$.  We consider the action of the
    subgroup of the unitary complex numbers given by the $n$-th roots of unity
    $c_n:=\{e^{2\pi i m/n}\}$ with $m=0, \ldots, n-1$ on the fibers of $M$ . The
    element $e^{2\pi i m/n}$ acts on a typical fiber by a rotation of $2 \pi 
    m/n$
    radians and acts on a special fiber with multiplicity $\alpha_i$ by a 
    rotation
    of $2 \pi m \alpha_i/n$ radians.
    
    We quotient $M$ by the action of this group and denote $\hat{M}=M/ c_n$ the
    resulting quotient space. By definition, the action of $c_n$ preserves the
    fibers and is effective. The manifold $\hat{M}$ is then a Seifert manifold 
    where
    we have {\em killed} the  multiplicity of all the special fibers of $M$. 
    Hence
    it is a locally trivial $S^1$-fibration over $B$ and since $\partial  B \neq
    \emptyset$, it is actually a trivial fibration so $\hat{M}$ is 
    diffeomorphic to
    $B \times \MS^1$.
    
    Let $\pi:M \rightarrow \hat{M}$ be the quotient map induced by the action of
    $c_n$.  Observe that $\hat{M}$, seen as a Seifert fibering with no special
    fibers, has the same base space as $M$ because the action given by $c_n$
    preserves fibers. In particular we have the following commutative diagram
    
    \begin{equation}\label{diag:seif_orbit}
    \begin{tikzcd}
    M \arrow[rightarrow]{r}{\pi} \arrow{d}{s}&\hat{M} \arrow{dl}{\hat{s}}\\
    B
    \end{tikzcd}
    \end{equation}
    
    Where $s$ (resp. $\hat{s}$) is the projection map from the Seifert fibering 
    $M$
    (resp. $\hat{M}$) onto its base space $B$.

    \begin{definition}Let $H$ be a horizontal surface in $M$. We say that $H$ is
        well embedded if it is invariant by the action of $c_n$.
    \end{definition}
     A horizontal surface $H$ defines a linear map $H_1(M; \Z) \to \Z$ by 
     considering its Poincare dual. If $H$ intersects a generic fiber $m$ 
     times, then it intersects a special fiber with multiplicity $\alpha$, 
     $m/\alpha \in \Z$ times. This is because a generic fiber covers that 
     special fiber  $\alpha$ times. Hence, by isotoping any horizontal fiber, 
     we can always find well-embedded  representatives $\hat{H} \in [H]$.

    \begin{remark}\label{rem:equivariant_iso}
        Observe that if $H$ and and $H'$ are two well-embedded surfaces with 
        $[H]=[H']$,
        then we can always find a fiber-preserving isotopy $h$ that takes the 
        inclusion
        $i:H \hookrightarrow M$ to an homeomorphism $h(\cdot,1):H \rightarrow 
        H'$  such that $h(H,t)$ is a well-embedded surface for all $t$. This 
        fact help us prove the following:
    \end{remark}
    
    \begin{lemma}\label{lem:11_horizontal}
        There is a bijection $\pi_{\sharp}:\mathcal{H}(M) \rightarrow
        \mathcal{H}(\hat{M})$ induced by $\pi$.
    \end{lemma}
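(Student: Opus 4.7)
The plan is to verify well-definedness, surjectivity, and injectivity in turn, systematically exploiting the fact (noted just before the lemma) that every class in $\mathcal{H}(M)$ admits a well-embedded representative, together with \Cref{rem:equivariant_iso}.

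\textbf{Well-definedness.} Given $[H]\in\mathcal{H}(M)$, choose a well-embedded representative $H_0\in[H]$ and set $\pi_\sharp([H]):=[\pi(H_0)]$. A well-embedded $H_0$ satisfies $\pi^{-1}(\pi(H_0))=H_0$, and since $\pi$ sends fibers of $M$ to fibers of $\hat M$, the transversality of $H_0$ to the fibers of $M$ descends to transversality of $\pi(H_0)$ to the fibers of $\hat M$; hence $\pi(H_0)$ is horizontal. Independence from the choice of representative comes from \Cref{rem:equivariant_iso}: any two well-embedded representatives of $[H]$ are linked by a fiber-preserving isotopy through well-embedded surfaces, which descends under $\pi$ to a fiber-preserving isotopy of $\hat M$ between their images.

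\textbf{Surjectivity.} Given $[K]\in\mathcal{H}(\hat M)$, I would set $H:=\pi^{-1}(K)$. Away from the special fibers $\pi$ is a genuine $c_n$-covering, so $H$ is automatically a smooth submanifold there; at the finitely many points where $K$ meets the central circle of a special fiber, one checks smoothness of $H$ locally using the standard model of $\pi$ as the quotient of a solid torus by a rotation, together with the transversality of $K$. Transversality of $H$ to the fibers of $M$ is then inherited from that of $K$. By construction $H$ is $c_n$-invariant and $\pi(H)=K$, so $\pi_\sharp([H])=[K]$.

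\textbf{Injectivity.} Suppose $\pi_\sharp([H_1])=\pi_\sharp([H_2])$ and choose well-embedded representatives $H_1',H_2'$. An ambient fiber-preserving isotopy of $\hat M$ carrying $\pi(H_1')$ to $\pi(H_2')$ is to be lifted to a $c_n$-equivariant fiber-preserving ambient isotopy of $M$; such a lift then transports $H_1'=\pi^{-1}(\pi(H_1'))$ to $H_2'=\pi^{-1}(\pi(H_2'))$, showing $[H_1]=[H_2]$. Over the open subset where $\pi$ is an honest $c_n$-cover, the lift is produced by ordinary path lifting combined with $c_n$-equivariance; in an equivariant tubular neighbourhood of each special fiber the isotopy has an explicit rotational model, and the two are patched by an equivariant partition of unity.

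The main obstacle is the lifting in the injectivity step across the branching locus of $\pi$, where $\pi$ fails to be a covering map. What makes this tractable is that the branch locus is a disjoint union of isolated central fibers, and each horizontal surface in question meets these fibers transversally at finitely many points, so the equivariant patching can be confined to small tubular neighbourhoods where the model is explicit.
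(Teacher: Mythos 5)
Your argument is essentially the same as the paper's, with one small reorganisation: where you prove injectivity directly, the paper instead exhibits $\pi^{-1}_\sharp([\hat H]):=[\pi^{-1}(\hat H)]$ as a well-defined two-sided inverse (together with surjectivity this is logically equivalent). Your well-definedness and surjectivity steps match the paper's word for word.

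One point worth flagging in your injectivity step: you propose to lift a fiber-preserving \emph{ambient} isotopy of $\hat M$ to a $c_n$-equivariant ambient isotopy of $M$. That is more than is needed, and it also creates the technical problem you yourself identify, because an arbitrary fiber-preserving ambient isotopy of $\hat M$ need not preserve the images of the special fibers in the base, and without that it simply does not lift across the branch locus. The paper sidesteps this by taking preimages of the \emph{moving surface} directly: if $\hat H_t$ is a family of horizontal surfaces interpolating between $\hat H$ and $\hat H'$, then $\pi^{-1}(\hat H_t)$ is a family of $c_n$-invariant horizontal surfaces interpolating between $\pi^{-1}(\hat H)$ and $\pi^{-1}(\hat H')$; no ambient lift is required. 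If you do want to keep the ambient-isotopy formulation, you should first argue that the isotopy between horizontal surfaces in $\hat M\simeq B\times\mathbb S^1$ can be taken \emph{vertical} (supported along the $\mathbb S^1$ direction, inducing the identity on $B$), so that the branch locus is automatically preserved and the equivariant lift is immediate. As written, that hypothesis is used implicitly but not stated, so there is a small gap; it is easily repaired by either of the two fixes above.
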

    
    \begin{proof}
        Let $[H] \in \mathcal{H}(M)$ and suppose that $H \in [H]$ is a 
        well-embedded
        representative. Then clearly $\pi(H) \in H(\hat{M})$. If $H'$ is another
        well-embedded representative of the same class, then by
        \Cref{rem:equivariant_iso} we have that $[\pi(H)] = [\pi(H')]$ in
        $\mathcal{H}(\hat{M})$. Hence the map $\pi_{\sharp}([H]):= [\pi(H)]$ is 
        well
        defined.
        
        The map $\pi_{\sharp}$ is clearly surjective because 
        $\pi^{-1}(\hat{H})$ is a
        well-embedded surface for any horizontal surface $\hat{H} \in 
        H(\hat{M})$ and
        hence $\pi_{\sharp}([\pi^{-1}(\hat{H})])=[\hat{H}]$.

        Now we prove that the natural candidate for inverse 
        $\pi^{-1}_\sharp([\hat{H}]):=[\pi^{-1}(\hat{H})]$ is well-defined. Let
        $[\hat{H}] \in  \mathcal{H}(\hat{M})$ with $\hat{H} \in [\hat{H}]$ a
        representative of the class. Let $H:=\pi^{-1}(\hat H)$. If
        $[\hat{H}]=[\hat{H}']$ for some $\hat{H}'$ in $H(\hat{M})$ then
        $[\pi^{-1}(\hat{H}')]=[\pi^{-1}(\hat{H})]$ by just pulling back the 
        isotopy
        between $\hat{H}$ and $\hat{H}'$ to $M$ by the map $\pi$. Hence the map 
        is well
        defined. By construction, it is clear that for any $H \in 
        \mathcal{H}(M)$ we
        have that $\pi_\sharp^{-1}(\pi_\sharp([H])=[H]$ so we are done.
    \end{proof}

    The objective of this section is to study $\mathcal{H}(M)$ but because of
    \Cref{lem:11_horizontal} above, it suffices to study $\mathcal{H}(\hat M)$.
    
    Fix a trivialization  $\hat{M} \simeq B  \times \MS^1$ once and for all. We
    observe that since $\partial B \neq \emptyset$, the surface $B$ is 
    homotopically
    equivalent to a wedge of $\mu=2g+r-1$ circles, denote this wedge by 
    $\tilde{B}$.
    Observe that $\mathcal{H}(M)$ is in bijection with multisections of 
    $\tilde{B} \times \MS^1 \rightarrow \tilde{B}$ up to isotopy. Multisections 
    are multivalued continuous maps from $\tilde{B}$ to $\tilde{B} \times 
    \MS^1$.
    
    \begin{lemma}\label{lem:class_hor}
        The elements in $\mathcal{H}(\tilde B \times \MS^1)$  are in bijection 
        with elements of $$H^1(\tilde B \times \MS^1; \Z) = H^1(\tilde B; \Z) 
        \oplus \Z$$ that are not in $H^1(\tilde B; \Z) \oplus \{0\}$. Oriented 
        horizontal surfaces that intersect positively any fiber of $\tilde{B} 
        \times \MS^1$ are in bijection with elements of $H^1(\tilde B; \Z) 
        \oplus \Z_{>0}.$
    \end{lemma}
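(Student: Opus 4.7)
The plan is to exhibit the claimed bijection via Poincar\'e--Lefschetz duality on the 3-manifold $\hat M=B\times\MS^1$ combined with the K\"unneth decomposition of its cohomology. An oriented, properly embedded horizontal surface $H\hookrightarrow\hat M$ represents a class in $H_2(\hat M,\partial\hat M;\Z)$, which is Lefschetz dual to a class in $H^1(\hat M;\Z)\cong H^1(\tilde B;\Z)\oplus\Z$ (using the homotopy equivalence $B\simeq\tilde B$ and K\"unneth). The correspondence $H\mapsto [H]$ will be the candidate bijection.

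First I would verify that this map is well-defined on isotopy classes, which is standard since an ambient isotopy of embedded surfaces induces a homology. I would then read off the $\Z$-coordinate as $d:=[H]\cdot[\{v\}\times\MS^1]$, the signed intersection of $H$ with a single fiber, $v$ being the wedge point of $\tilde B$. Because $H$ is horizontal and orientable, transversality forces all local intersection signs along a fiber to agree, so $d\ne 0$; when the orientation of $H$ is chosen so that every intersection with a fiber is positive, $d>0$. This shows the image lies in $H^1(\tilde B;\Z)\oplus(\Z\setminus\{0\})$, respectively in $H^1(\tilde B;\Z)\oplus\Z_{>0}$.

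For surjectivity, given a tuple $(a_1,\dots,a_\mu,d)$ with $d\ne 0$, I would build a representing multisection edge-by-edge: fix $|d|$ points on $\{v\}\times\MS^1$, and on each torus $\gamma_i\times\MS^1$ choose an oriented 1-submanifold of homology class $a_i[\gamma_i]+d[\MS^1]$ whose intersection with the fiber over $v$ is exactly the prescribed set. Concatenating these pieces at the wedge vertex produces a horizontal surface in $\tilde B\times\MS^1$ realizing the given class.

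The main obstacle is injectivity: if $H_0$ and $H_1$ carry the same class $(a_1,\dots,a_\mu,d)$, I need to isotope one to the other. After a preliminary isotopy I can arrange $H_0\cap(\{v\}\times\MS^1)=H_1\cap(\{v\}\times\MS^1)$ as an ordered set of $|d|$ points. On each torus $\gamma_i\times\MS^1$, the two restrictions are then oriented 1-submanifolds with equal homology class and equal boundary on the common fiber over $v$, and the classical classification of 1-submanifolds of the torus with prescribed boundary on a fiber provides a rel-boundary isotopy between them. These per-edge isotopies match along the common fiber and therefore glue to a global ambient isotopy carrying $H_0$ onto $H_1$. The delicate point is precisely this compatibility at the wedge vertex, which succeeds because a single fiber is shared by all the tori comprising $\tilde B\times\MS^1$.
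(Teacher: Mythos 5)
Your proof is correct and takes essentially the same approach as the paper: identify a horizontal surface with its Poincar\'e dual class in $H^1$, read off the last coordinate as the intersection number with a fiber, and build representatives by choosing multicurves of prescribed slope on the tori $\gamma_i\times\MS^1$ that share a common set of points on the fiber over the wedge vertex. The one genuine supplement you offer is the injectivity argument (per-edge rel-fiber isotopies of arcs in the cut annuli glued along the common fiber), which the paper leaves implicit; this is a worthwhile addition, though you should state more carefully that after cutting $\gamma_i\times\MS^1$ along the fiber, disjoint monotone arcs with prescribed endpoints are classified rel boundary by the single winding integer $a_i$, which is exactly what equality of homology class gives you.
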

    
    \begin{proof}
        We have that $H^1(B; \Z) \oplus \Z = \Z^{\mu} \oplus \Z$. Given an 
        element
        $(p_1, \ldots, p_{\mu}, q) = k((p_1', \ldots, p_{\mu}', q'))  \in 
        \Z^{\mu}
        \times \Z$ with $q \neq 0$ and $(p_1', \ldots, p_{\mu}', q')$ 
        irreducible
        (seeing $\Z^{\mu} \times \Z$ as a $\Z$-module). Let
        $\frac{p_j'}{q'}=\frac{k_jp_j''}{k_jq''}$ with $p_j''/q''$ an 
        irreducible
        fraction. Consider in each $S_k^1 \times \MS^1$, $k_j$ disjoint copies 
        of 
        the
        closed curve of slope $p_j''/q''$. We denote the union of these $k_j$ 
        copies by
        $\tilde H_j$. We observe that $ \tilde H_j$ intersects $C$ in 
        $k_jq''=q'$ points
        for each $j$. We can, therefore, isotope the connected components of 
        each
        $\tilde H_j$ so that $\bigcup_j \tilde H_j$ intersects $C$ in just $q'$ 
        points.
        We do so and consider the set $\bigcup_j \tilde H_j$. The horizontal 
        surface
        $\tilde H$ of $\tilde{B}\times \MS^1$ associated to $k((p_1', \ldots, 
        p_{\mu}',
        q')$ is $k$ disjoint parallel copies of $\bigcup_j \tilde H_j$. 
        
        On the other direction, given an element $[H]\in \mathcal{H}(\tilde B 
        \times
        \MS^1)$. Let $[H]$ denote also the class of any horizontal surface in 
        $H_1(\tilde
        B \times \MS^1; \Z)$ and we simply have $q = [H]\cdot C$. And we have 
        $p_i = [H]
        \cdot [S_i^1 \times \{0\}]$. That is, the corresponding element in 
        $H^1(B; \Z) \oplus \Z$ is the Poincar\'e dual of the class of $H$ in 
        the homology of $\tilde{B} \times \MS^1$.
    \end{proof}
    
    \begin{lemma}
        $\tilde H$ is connected if and only if the element $(p_1, \ldots, 
        p_{\mu}, q)$
        is irreducible in $H^1(\hat{M}; \Z) \simeq  H^1(\tilde B; \Z) \oplus 
        \Z$. 
    \end{lemma}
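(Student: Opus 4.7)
My plan is to reduce connectedness of $\tilde H$ to a monodromy computation on its intersection with the central fiber $C$. Using the construction from the previous lemma, I arrange by a small isotopy that $\tilde H$ meets $C$ transversely in exactly $q$ equally-spaced points, which I label cyclically by $\Z/q\Z$ using the orientation of $C$. The restriction of the projection $\tilde B \times \MS^1 \to \tilde B$ to $\tilde H$ is then an unramified $q$-fold covering, and the connected components of $\tilde H$ correspond bijectively to the orbits of the monodromy action of $\pi_1(\tilde B)$ on this $q$-element fiber.

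Since $\tilde B = S^1_1 \vee \cdots \vee S^1_\mu$, this monodromy is determined by the $\mu$ permutations $\sigma_i$ of $\Z/q\Z$ induced by traversing $S^1_i$ once. I would argue that each $\sigma_i$ is the translation $k \mapsto k + p_i \pmod{q}$. The argument runs as follows: the restriction of $\tilde H$ to the torus $S^1_i \times \MS^1$ is, up to isotopy, a disjoint union of parallel simple closed curves whose total homology class is pinned down by $p_i = [\tilde H]\cdot[S^1_i \times \{0\}]$ and $q = [\tilde H]\cdot[C]$. Because the curves are parallel, $\sigma_i$ is a cyclic rotation of the $q$ labelled points on $C$, and reading off the slope shows that this rotation shifts by $p_i$ positions (signs depend on orientation conventions but do not affect the sequel).

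Once the $\sigma_i$ are identified, the monodromy group is the subgroup of translations of $\Z/q\Z$ generated by $p_1,\ldots,p_\mu$, which coincides with the cyclic subgroup $d\,\Z/q\Z$ where $d = \gcd(p_1,\ldots,p_\mu,q)$. This subgroup has exactly $d$ orbits on $\Z/q\Z$, so $\tilde H$ has exactly $d$ connected components. Hence $\tilde H$ is connected if and only if $d = 1$, which is exactly the condition that $(p_1,\ldots,p_\mu,q)$ is irreducible (primitive) in $\Z^{\mu+1} \cong H^1(\hat M;\Z)$. As a sanity check, this recovers the ``$k$ parallel copies'' picture of the previous lemma when the vector is $k$ times a primitive one: then $d = k$ and one gets $k$ components.

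The main point requiring care is the identification of $\sigma_i$ with the translation by $p_i$, which amounts to computing the holonomy of a $q$-fold cover of the torus $S^1_i \times \MS^1$ that carries a horizontal surface of the prescribed bidegree. This is routine but requires bookkeeping with orientation conventions; fortunately any sign error is harmless, since replacing $p_i$ by $-p_i$ does not change the subgroup of $\Z/q\Z$ they generate and therefore does not affect the final $\gcd$ computation.
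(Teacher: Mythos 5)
Your proof is correct and essentially the same as the paper's: both arguments fix the $q$ points of $\tilde H\cap C$, labelled cyclically by $\Z/q\Z$, and determine which pairs of points are joined by arcs of $\tilde H$ lying over each circle $S_j^1$; connectedness then reduces to a gcd computation. You package this as transitivity of the monodromy action (translations by $p_j$), while the paper writes the same data as the solvability of the linear congruence $i + t_1 k_1 + \cdots + t_\mu k_\mu \equiv j \pmod q$, where $k_j$ is the number of parallel curves over $S_j^1$; the two formulations agree because $k_j = \gcd(p_j,q)$, so translation by $p_j$ and translation by $k_j$ generate the same subgroup of $\Z/q\Z$, and $\gcd(k_1,\dots,k_\mu) = \gcd(p_1,\dots,p_\mu,q)$.
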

    
    \begin{proof}
        We know that by construction $\tilde{H} \cap C$ are $q$ points. It is 
        enough to
        show that these $q$ points lie in the same connected component since 
        any other
        part of $\tilde{H}$ intersects some of these points. We label the points
        cyclically according to the orientation of $C$. So we have $c_1, 
        \ldots, c_q \in
        C$. We recall that  $S_j^1 \times \MS^1 \cap \tilde H$ is formed by 
        $k_j$  parallel copies of the closed curve of slope $p_j'/q'$ with 
        $\frac{k_jp_j'}{k_j q'}= \frac{p_j}{q}$. Hence the point $x_i$ is 
        connected by these curves with 
        the points $c_{i+ tk_j \mod q}$. Since $(p_1, \ldots, p_{\mu}, q)$ is 
        irreducible then $\gcd(p_1, \ldots, p_\mu,q)=1$ and hence $\gcd(k_1, 
        \ldots, 
        k_\mu)=1$. Therefore the equation
        
        $$i + t_1k_1 + \cdots + t_\mu k_\mu = j \mod q$$ admits an integer 
        solution on
        the variables $t_1, \ldots, t_\mu$ for any two $i,j \in \{1, \ldots, 
        q\}$. This
        proves that the points $c_i$ and $c_j$ are in the same connected 
        component in
        $\tilde{H}$.
        
        Conversely if the element is not irreducible, then $(p_1, \ldots, 
        p_{\mu}, q)= k
        (p_1', \ldots, p_{\mu}', q')$ for $(p_1', \ldots, p_{\mu}', q')$ 
        irreducible and
        $k>1$. Then, by construction, $\tilde H$ is formed by $k$ disjoint 
        copies of the
        connected horizontal surface associated to $(p_1', \ldots, p_{\mu}', 
        q')$ 
    \end{proof}

    \subsection*{Handy model of a Seifert fibering.}\label{subsec:handy_model}

    We describe a particularly handy model of the Seifert fibering that we use 
    in
    \Cref{sec:algor}. The idea is taken from a construction in \cite{Hatch}. 
    For each $i=1, \ldots, k$ let $x_i \in B$ be the image by 
    $s:M
    \rightarrow B$ of the special fiber $F_i$. We pick one boundary component 
    of the
    base space and denote it by $L$. For each $i=1, \ldots, k$ pick an arc $l_i$
    properly embedded in $B$ and with the end points in $L$ (i.e. with $l_i 
    \cap L =
    \partial l_i$) in such a way that cutting along $l_i$ cuts off a disk $D_i$ 
    that
    contains $x_i$ and no other point from $\{x_1, \ldots, x_k\}$. We pick a
    collection of such arcs $l_1, \ldots, l_k$ pairwise disjoint. We define 
    $$B':=B
    \setminus \bigsqcup_i int(D_i)$$ where $int(\cdot)$ denotes the interior.  
    See
    \Cref{fig:base_seif} below and observe that $B$ and $B'$ are diffeomorphic. 
    
    \begin{figure}[h]
        \centering
        \includegraphics[scale=0.5]{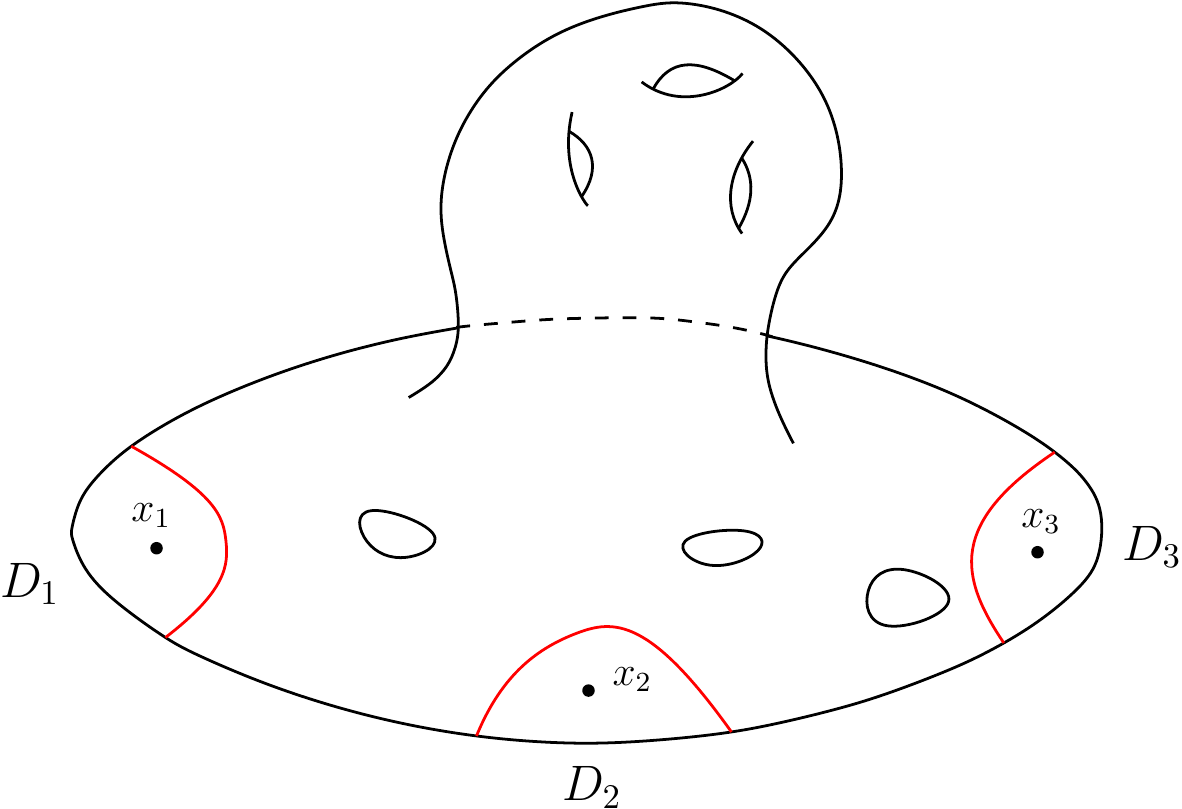}
        \caption{We see the base space $B$ of a Seifert manifold. It has genus 
        $3$ and
            $4$ boundary components. The $3$ points are the image of the 
            special fibers by
            the projection $s$ and if we cut along the three red arcs, we get 
            the surface
            $B'$.}
        \label{fig:base_seif}
    \end{figure}
    
    Let $M':=s^{-1}(B')$. Since $M'$ contains no special fibers and $\partial B'
    \neq \emptyset$ then $M'$ is diffeomorphic as a circle bundle to $B' \times
    \MS^1$. Recall that  $s^{-1}(D_i)$ is a solid torus of type 
    $(p_i,\alpha_i)$ 
    with
    $p_i \beta_i \equiv 1 \mod \alpha_i$ (see \Cref{lem:seifpq}). 
    
    Summarizing, the { \em handy model } consists of:
    \begin{enumerate}
        \item[i)] A system of arcs $l_1, \ldots, l_k$ as explained.
        \item[ii)] A trivialization of $M'$, that is an identification of $M'$ 
        with $B'
        \times \MS^1$. 
        \item[iii)] Identifications of $s^{-1}(D_i)$ with the corresponding 
        model
        $T_{p_i,\alpha_i}$ for each $i=1, \ldots, k$.
    \end{enumerate}
    
   \begin{remark}\label{rem:obs_handy}
        Let $A_i$ be the vertical annulus $s^{-1}(l_i)$. A properly embedded 
        horizontal disk $D \subset s^{-1}(D_i)$ intersects $A_i$ in $\alpha_i$ 
       disjoint arcs by definition of the number $\alpha_i$. Since a 
        horizontal surface $H$ intersects each typical fiber the same number of 
       times we get that $H$ must meet each fiber
        $t \cdot \lcm(\alpha_1, \ldots, \alpha_k) =t \cdot n$ times for some $t 
       \in \Z_{>0}$. If a horizontal surface meets $t\cdot n$ times a typical 
        fiber, then it meets $t \cdot n/\alpha_i$ times the special fiber $F_i$.
   \end{remark}

    \begin{lemma}\label{lem:hor_sur_coho}
       There is a bijection between $\mathcal{H}(M)$ and $HS(M):=\{\gamma 
       \in H^1(M; \Z) : \gamma([C]) \neq 0\}$ where $C$ is a generic fiber of 
       $M$.
    \end{lemma}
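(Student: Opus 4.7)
The plan is to compose the bijections from Lemmas \ref{lem:11_horizontal} and \ref{lem:class_hor} with a cohomological identification induced by the quotient map $\pi: M \to \hat{M}$. Lemma \ref{lem:11_horizontal} provides $\pi_\sharp: \mathcal{H}(M) \xrightarrow{\sim} \mathcal{H}(\hat{M})$. Since $\hat{M}$ is homotopy equivalent to $\tilde{B} \times \MS^1$, Lemma \ref{lem:class_hor} identifies $\mathcal{H}(\hat{M})$ with $\{\hat{\gamma} \in H^1(\hat{M};\Z) : \hat{\gamma}([\hat{C}]) \neq 0\}$ via Poincar\'e duality, where $\hat{C}$ is a typical fiber of $\hat{M}$. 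The remaining task is to show that the pullback $\pi^*: H^1(\hat{M};\Z) \to H^1(M;\Z)$ restricts to a bijection between this set and $HS(M)$.

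For injectivity of $\pi^*$, I would argue that $\pi$ is a branched cover of degree $n$, so $\pi_*: H_1(M;\Z) \to H_1(\hat{M};\Z)$ has finite-index image; this forces $\pi^*$ to be injective on integer cohomology, since any homomorphism to $\Z$ vanishing on a finite-index subgroup must vanish entirely. For the image, $\pi$ restricts to an $n$-fold unramified covering on any typical fiber, hence $\pi_*[C] = n[\hat{C}]$, giving the formula $\pi^*\hat{\gamma}([C]) = n\hat{\gamma}([\hat{C}])$. Consequently $\pi^*$ sends the set of Lemma \ref{lem:class_hor} into the subset of $\gamma \in H^1(M;\Z)$ with $\gamma([C])$ a nonzero multiple of $n$, which is a priori contained in $HS(M)$.

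The delicate step is the reverse inclusion: every $\gamma \in HS(M)$ automatically satisfies $\gamma([C]) \in n\Z$. For this I would invoke the Seifert relations $\alpha_i [q_i] + \beta_i [f] = 0$ in $H_1(M;\Z)$, where $[q_i]$ is a small meridian around the $i$-th special fiber and $[f] = [C]$. Since $\gcd(\alpha_i, \beta_i) = 1$, any $\gamma \in H^1(M;\Z)$ must satisfy $\alpha_i \mid \gamma([f])$ for every $i$, and therefore $n = \lcm(\alpha_1, \ldots, \alpha_k)$ divides $\gamma([C])$. This divisibility is the crucial arithmetic input: without it $\pi^*$ would hit only a proper subset of $HS(M)$, and with it the three steps combine to yield the desired bijection $\mathcal{H}(M) \cong HS(M)$.
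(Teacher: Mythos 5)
Your route is genuinely different from the paper's. The paper argues constructively: starting from $\gamma \in HS(M)$ it restricts $\gamma$ to the pieces of a handy model (the trivial bundle $M'$ and the fibered solid tori $s^{-1}(D_i)$), builds a horizontal surface on each piece, and glues; the inverse direction $[H] \mapsto \mathrm{PD}[H]$ is taken as given. You instead try to reuse \Cref{lem:11_horizontal} and \Cref{lem:class_hor} and transport the answer back along $\pi^*$.

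The gap is the last step. You establish (a) $\pi^*$ is injective, (b) $\pi^*$ sends $\{\hat\gamma : \hat\gamma([\hat C])\neq 0\}$ into $\{\gamma : \gamma([C]) \in n\Z\setminus\{0\}\}$, and (c) $HS(M)=\{\gamma : \gamma([C]) \in n\Z\setminus\{0\}\}$. None of this proves that $\pi^*$ is \emph{onto} $HS(M)$: an injection whose image is contained in a set need not exhaust that set, and what you call the ``reverse inclusion'' is the set-theoretic equality in (c), not the needed inclusion $HS(M) \subseteq \operatorname{im}\pi^*$. To close the gap you must show $\pi^*\colon H^1(\hat M;\Z)\to H^1(M;\Z)$ is surjective (equivalently an isomorphism, since both groups are free of rank $2g+r$). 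One way: show $\pi_*\colon H_1(M;\Z)\to H_1(\hat M;\Z)$ is surjective. Using the standard presentation of $H_1$ of a Seifert fibered space with boundary, the section classes $q_i$ are sent by $\pi_*$ to curves bounding the disks $D_i\times\{0\}$ in $\hat M\cong B\times\MS^1$, hence to $0$; the $H_1(B)$-generators map to themselves; and $\pi_*(h)=n[\hat C]$ forces the free-part generator of the fiber factor of $H_1(M;\Z)$ to map onto $[\hat C]$. This gives surjectivity of $\pi_*$, hence of $\pi^*$ on $\mathrm{Hom}(-,\Z)$, and only then does your chain of bijections close. Without this (or an explicit construction of a horizontal surface realizing a given $\gamma$, as the paper does), the proof is incomplete.
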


    \begin{proof}
        Clearly, an element $[H] \in \mathcal{H}(M)$ can be seen as the dual of 
        a $1$-form $\gamma$ with $\gamma_H(C) \neq 0$. 
        
        To see that there is a  bijection, take a 
        \hyperref[subsec:handy_model]{handy model} for $M$ 
        (we use notation  described there). Then we observe that given a 
        $\gamma \in  HS(M)$, it  restricts to a $1$-form in $H^1(M';\Z)$. The 
        manifold $M'$ is diffeomorphic to a product, so by 
        \Cref{lem:class_hor}, there  is a horizontal surface in 
        $\mathcal{H}(M')$ representing the restriction of $\gamma$ to $M'$. It 
        also  restricts as a $1$-form in 
        $H^1(s^{-1}(B); \Z)$ where we recall that 
        $s^{-1}(B)$ is a disjoint union of tori, each containing a special 
        fiber of $M$. If $\gamma([C]) =n$ then, $\gamma([F_i]) = n / \alpha_i 
        \in \Z$ so we can see the dual of $\gamma|_{s^{-1}(B)}$ as an union of 
        $n/\alpha_i$ disks in each of the tori $s^{-1}(D_i)$ for all $i$. Each 
        of these disks intersects $\alpha_i$ times the annulus $s^{-1}(l_i)$. 
        So  we can glue the horizontal surface represented by $\gamma|_{M'}$ 
        with  these disks to produce a horizontal surface in all $M$. By 
        construction, this horizontal surface represents the given $\gamma \in 
        H^1(M;\Z)$.
    \end{proof}

    \begin{lemma}\label{lem:connectivity_hor}
    Let $\hat{H}\in H(\hat{M})$ and $H:=\pi^{-1}(\hat{H})$. Then $H$  is 
    connected
    if and only if $\hat{H}$ is connected.
    \end{lemma}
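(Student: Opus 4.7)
The forward implication is immediate: $\pi$ is continuous with $\pi(H)=\hat{H}$, so connectedness of $H$ forces connectedness of $\hat{H}$. For the converse I would use the handy model of $M$ from \Cref{subsec:handy_model}. Set $\hat{H}':=\hat{H}\cap\hat{s}^{-1}(B')$. Each component of $\hat{H}\cap\hat{s}^{-1}(D_i)$ is a horizontal disk in the solid torus $\hat{s}^{-1}(D_i)$, glued to $\hat{H}'$ along a properly embedded arc on the vertical annulus $l_i\times\MS^1$. Removing such caps one at a time does not disconnect a surface, so $\hat{H}'$ is connected.

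Over $M':=s^{-1}(B')$, which contains no special fibers, the $c_n$-action is free, so $\pi|_{M'}\colon M'\to\hat{M}'$ is an unramified $n$-fold cyclic cover. Since $\hat{H}'$ meets every fiber of $\hat{M}'\to B'$ exactly once, the preimage $H':=\pi^{-1}(\hat{H}')$ splits as a disjoint union of $n$ copies $H_1,\dots,H_n$ of $\hat{H}'$, which I would label cyclically using the generator of $c_n$. Then for each disk cap $\hat{D}\subset\hat{H}\cap\hat{s}^{-1}(D_i)$, the preimage $\pi^{-1}(\hat{D})$ consists of horizontal disks in the $(p_i,\alpha_i)$-solid torus $s^{-1}(D_i)$. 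By \Cref{rem:obs_handy} there are exactly $n/\alpha_i$ such lifts, matching the size of the $c_n$-orbit of $\hat{D}$, whose stabilizer is the unique subgroup of $c_n$ of order $\alpha_i$. Tracking the $c_n$-rotation on $s^{-1}(D_i)$ shows that each such lift is incident to the $H_a$'s whose indices form an arithmetic progression of common difference $n/\alpha_i$ modulo $n$.

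Given this combinatorial picture, connectedness of $H=H'\cup\bigcup_{i,\hat{D}}\pi^{-1}(\hat{D})$ reduces to the question: can every pair of indices $a,b\in\{1,\dots,n\}$ be joined by a sequence of jumps of sizes $n/\alpha_1,\dots,n/\alpha_k$ modulo $n$? This is the congruence
\[
\sum_{i=1}^{k} t_i \frac{n}{\alpha_i} \equiv b-a \pmod n,
\]
which admits an integer solution because $\gcd(n/\alpha_1,\dots,n/\alpha_k)=1$, a direct consequence of $n=\lcm(\alpha_1,\dots,\alpha_k)$. This would finish the proof.

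The main obstacle is reconciling the two independent cyclic labelings: the labeling of the $n$ sheets $H_1,\dots,H_n$ coming from the free $c_n$-action on $M'$, and the labeling of the $n/\alpha_i$ disk lifts of each $\hat{D}$ coming from the quotient $c_n/(\text{stabilizer of }\hat{D})$. Showing that these match so that the index shift through each special torus is exactly $n/\alpha_i$ requires a careful book-keeping of how $c_n$ rotates a $(p_i,\alpha_i)$-solid torus and how a horizontal disk therein sits with respect to the trivialization of $M'$ at the gluing annulus $\partial D_i\times\MS^1$.
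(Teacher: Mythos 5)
The forward direction is fine. For the converse you take a genuinely different route from the paper: the paper's proof is a short cohomological argument relying on the chain of bijections $HS(M) \leftrightarrow \mathcal{H}(M) \leftrightarrow \mathcal{H}(\hat{M})$ from \Cref{lem:hor_sur_coho} and \Cref{lem:11_horizontal} (if $\pi^{-1}(\hat{H})$ were disconnected, its components would be disjoint parallel horizontal surfaces all representing the same class in $HS(M)$, contradicting the bijection). Your proof instead uses the handy model, the decomposition of $H$ into sheets over $B'$ together with the disk caps over the $D_i$, and the congruence coming from $\gcd(n/\alpha_1,\dots,n/\alpha_k)=1$.

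The problem is that your sheet decomposition is not correct in general. You assert that $\hat{H}'$ meets every fiber of $\hat{M}'\to B'$ exactly once and that consequently $\pi^{-1}(\hat{H}')$ is $n$ disjoint copies of $\hat{H}'$. Neither step holds. First, a connected $\hat{H}$ corresponds to an irreducible tuple $(p_1,\dots,p_\mu,q)$ with arbitrary $q>0$, so $\hat{H}'$ meets each fiber $q$ times, not once. Second, and more seriously, even when $q=1$ the cover $\pi^{-1}(\hat{H}')\to\hat{H}'$ is a cyclic $n$-fold cover whose monodromy along a loop of $\hat{H}'$ over $S^1_j$ is (roughly) $p_j'\bmod n$, which is generally nonzero; the number of components of $\pi^{-1}(\hat{H}')$ is then $n$ divided by the order of the monodromy image in $\Z/n\Z$, which can be strictly less than $n$ and can even be $1$. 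So $\pi^{-1}(\hat{H}')$ need not split into $n$ copies, and the combinatorics of your arithmetic-progression argument (which treats the indices $1,\dots,n$ as sheets) does not apply as written. The subtlety you flagged at the end — reconciling the two cyclic labelings — is real but secondary; the primary gap is the claimed triviality of the restricted cover. The conclusion is still true (fewer sheets only helps connectivity), but the argument would need to be reformulated in terms of the actual number $m\mid n$ of components of $\pi^{-1}(\hat{H}')$ and jumps $n/\alpha_i\bmod m$; alternatively the paper's cohomological route sidesteps the issue entirely.
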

    \begin{proof}
      If $H$ is connected, then so is $\hat{H}$ because $\pi$ is a 
     continuous map.
     
       Suppose now that $\hat{H}$ is connected. If $\pi^{-1}(\hat{H})$ is not 
       connected, then it is formed by parallel copies of diffeomorphic 
       horizontal surfaces. Each of them is sent by $\pi$ onto $\hat{H}$ and 
       each of them represents the same element in $HS(M)$. But, by 
       \Cref{lem:hor_sur_coho} $HS(M)$ is in bijection with $\mathcal{H}(M)$ 
       which, by \Cref{lem:11_horizontal}, is in bijection with 
       $\mathcal{H}(\hat{M})$. So we get to a contradiction.
    \end{proof}
    By construction, we have established the $1:1$ correspondences 
    
    \begin{equation}\label{eq:corresp}
        HS(M)
        \longleftrightarrow 
        \mathcal{H}(M) 
        \longleftrightarrow 
        \mathcal{H}(\hat{M})
        \longleftrightarrow 
         H^1(B; \Z) \oplus \Z  \setminus H^1(B; \Z) \oplus \{0\}
    \end{equation}
    
    Where the first correspondence is \Cref{lem:hor_sur_coho}, the second is 
    \Cref{lem:11_horizontal} and the last one is \Cref{lem:class_hor}. 
    
    Actually if we fix an orientation on the manifold and the fibers and we 
    restrict ourselves to oriented horizontal surfaces that intersect 
    positively the fibers of $M$, these are parametrized by elements in $H^1(B; 
    \Z) \oplus \Z_{>0}$. From now on we restrict ourselves 
    to  oriented horizontal surfaces $H$ with $H \cdot C >0$, that is, those 
    whose oriented intersection product with any typical fiber is positive. 
    Also the fibers are assumed to be oriented. This orientation induces a 
    monodromy on each horizontal surface.
    
    \begin{remark}\label{rem:ambiguity}
        Let $\Si$ be a surface with boundary and $\phi: \Si \rightarrow \Si$ a 
        periodic automorphism and let $\Si_\phi$ be the corresponding mapping 
        torus which is a Seifert manifold. The manifold $\Si_\phi$ fibers over 
        $S^1$ and we can see $\Si$ as a horizontal surface of $\Si_\phi$ by 
        considering any of the fibers of $f:\Si_\phi \rightarrow \MS^1$. Now 
        let 
        $\Si^{\phi}$ be the orbit space of $\Si$ which is also the base space 
        of $\Si_{\phi}$. Let  $m$ be the $\lcm$ of the multiplicities of the 
        special fibers of the Seifert fibering and let $\Si_\phi/c_m$ be the 
        quotient space resulting from the action of $c_m$ on $\Si_\phi$. We 
        observe, as before, that $\Si_\phi/c_m$ is diffeomorphic to $\Si^\phi 
        \times \MS^1$ but there is not preferred diffeomorphism between them. A
        trivialization is given by a choice of a section of $\Si_\phi /c_m \to 
        \Si^\phi$.
        
        Let $[S_1], \ldots, [S_\mu]$ be a basis of the homology group 
        $H_1(\Si^\phi;\Z)$ where each $S_i$ is a simple closed curve in 
        $\Si^{\phi}$. Let $C$ be any fiber of of $\Si_\phi/c_m$. Let $w, \hat w 
        : \Si^\phi \rightarrow \Si_\phi/c_m$ be two sections, then we have two 
        different basis of the homology of $H_1(\Si_\phi/c_m; \Z)$ induced by 
        these two sections. For instance
        $$\{[w(S_1)], \ldots, [w(S_\mu)], [C]\} \hspace{5pt} 
        \mathrm{and} \hspace{5pt} \{[\hat{w}(S_1)], \ldots, [\hat{w}(S_\mu)], 
        [C]\}.$$ Let 
        $\Si = 
        f^{-1}(0)$ be the horizontal surface that we are studying and let 
        $\hat{\Si}:= \pi (\Si)$ where $\pi$ is the quotient map $\Si_{\phi} 
        \rightarrow \Si_{\phi}/c_m$.
        Then $\hat{\Si}$ is represented with respect to the (duals of the) two 
        basis by integers $(p_1, \ldots, p_\mu, q)$ and $(\hat{p}_1, \ldots, 
        \hat{p}_\mu, q)$ respectively and $p_i \equiv \hat{p}_i \mod q$ for all 
        $i=1, \ldots, \mu$ because a section differs from another section in a 
        integer sum of fibers at the level of homology.
        
        So the numbers  $p_1, \ldots, p_{\mu}$ are well defined modulo $\Z_q$ 
        regardless of the trivialization chosen for $\Si_{\phi}/c_m$. Also by 
        the discussion above, we see that if 
        we fix a basis of $H_1(B; \Z)$, then all the elements of the form $(p_1 
        + n_1q, \ldots, p_{\mu} + n_\mu q, q)$ represent diffeomorphic 
        horizontal surfaces with the same monodromy. That there 
        exists an diffeomorphism of $M$ preserving the fibers that sends $H_1$ 
        to $H_2$ comes from the fact that on a torus $S^1 \times \MS^1$,
        there exist a diffeomorphism preserving the vertical fibers $\{t\} 
        \times \MS^1$ that sends the curve of type $(p,q)$ to the curve of type 
        $(p+kq,q)$ for any $k\in \Z$: the $k$-th power of a left handed Dehn 
        twist along some fiber $\{pt\} \times \MS^1$ that is different from $C$.
    \end{remark}

    \section{Translation Algorithms}\label{sec:algor}
    
    Every mapping torus arising from a \tat graph is a Seifert manifold so it 
    admits a (star-shaped) plumbing graph. The monodromies induced on  
    horizontal surfaces of Seifert manifolds are periodic.
     
    In this section we describe an algorithm that, given a general \tat 
    graph, produces a star-shaped plumbing graph together with the element in 
    cohomology modulo $\Z_q$ corresponding to the horizontal surface given by 
    the \tat graph. We also describe the algorithm that goes in the opposite 
    direction. 
    
    \subsection{From general \tat graph to star-shaped plumbing graph}
    
    We first state a known proposition that used in the algorithm. It can be 
    found in several references in the literature. See for 
    example \cite{Neum2} or \cite{Peder} .
    
    \begin{proposition}\label{prop:seif_to_plumb}
        Let $M(g,r; (\hat\alpha_1, \hat\beta_1), \ldots, (\hat\alpha_k, 
        \hat\beta_k))$ be a Seifert fibering. Then it is diffeomorphic as a 
        circle bundle to a Seifert fibering of the form $$M(g,r; 
        (1,b),(\alpha_1, \beta_1), \ldots, (\alpha_k, \beta_k))$$ where $0 \leq 
        \beta_i< \alpha_i$ for all $i=1,\ldots, k$. If the surface admits a 
        horizontal surface, then $b=- \sum_i \frac{\beta_i}{\alpha_i}$. The 
        corresponding plumbing graph  associated to the Seifert 
        manifold is
        \begin{figure}[H]
            \centering
            \includegraphics[scale=0.7]{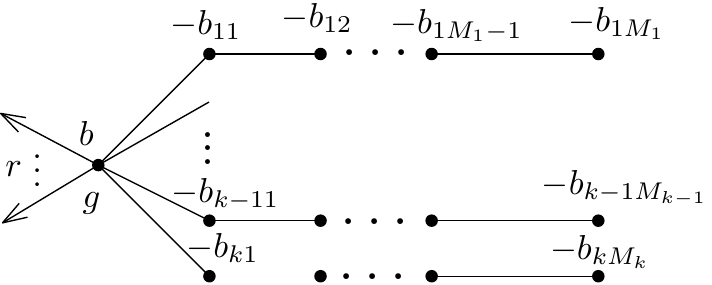}
            \caption{Plumbing graph for a Seifert manifold}
            \label{fig:seifplum}
        \end{figure}
        
        where the numbers $b_{ij}$ are the continuous fraction expansion for 
        $\alpha_i/ 
        \beta_i$, that is $$\frac{\alpha_i}{\beta_i}= b_{i1} - \frac{1}{b_{i2}- 
        \frac{1}{b_{i3}-
                \frac{1}{\ldots}}}$$
    \end{proposition}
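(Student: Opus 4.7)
The plan is to proceed in three stages: (a) bring the Seifert invariants to the claimed normalized form and extract the integer $b$; (b) identify $b$ when a horizontal surface exists; (c) read off the plumbing graph by matching gluing matrices through \Cref{lem:bamboo}.

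For stage (a), I would use the standard bookkeeping: given $\hat\beta_i = n_i\hat\alpha_i + \beta_i$ with $0\le \beta_i<\hat\alpha_i=\alpha_i$, drill one additional disk from the base $B'$ of the trivial bundle so that it has $r+k+1$ boundary components, and fill that new torus with a trivial solid torus carrying invariant $(1,b)$. Changing the global section of $B'\times\MS^1$ by any class in $H^1(B';\Z)$ shifts each boundary section $s_i\mapsto s_i+n_i f_i$, which replaces $(\hat\alpha_i,\hat\beta_i)$ by $(\alpha_i,\beta_i)$; the total shift gets absorbed into the integer $b$ on the new boundary. This reproduces the unique normal form with $0\le\beta_i<\alpha_i$.

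For stage (b), the existence of a horizontal surface $H$ gives by \Cref{lem:hor_sur_coho} a class $\gamma\in H^1(M;\Z)$ nonzero on the generic fiber $[C]$. Writing out the relations among the classes $[m_i], [l_i]$ imposed by the attaching matrices $\left(\begin{smallmatrix}-\alpha_i&c_i\\ -\beta_i&d_i\end{smallmatrix}\right)$ for the $k$ special tori and $\left(\begin{smallmatrix}-1&0\\ -b&1\end{smallmatrix}\right)$ for the $(1,b)$-torus, and evaluating $\gamma$ on both sides, one obtains the rational relation $(b+\sum_i\beta_i/\alpha_i)\,\gamma([C])=0$. Since $\gamma([C])\ne 0$, this forces $b=-\sum_i\beta_i/\alpha_i$. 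This is the familiar fact that a Seifert fibration with boundary admits a horizontal surface iff its rational Euler number vanishes.

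For stage (c), the central vertex with Euler decoration $b$ corresponds to the $S^1$-bundle over a disk with one boundary component filled by the $(1,b)$-solid torus, i.e.\ to the trivial bundle $B'\times\MS^1$ reassembled via the trivial invariant $(1,b)$. Each arm corresponds to one special fiber: by \Cref{lem:bamboo}(2), the chain decorated by $b_{i1},\ldots,b_{is}$ produces a gluing matrix $\left(\begin{smallmatrix}a&b\\ c&d\end{smallmatrix}\right)$ from the boundary of the end solid torus to the central piece, with $-d/c=[b_{i1},\ldots,b_{is}]$. Comparing the first column, which records the image of the meridian, with the first column $(-\alpha_i,-\beta_i)^T$ of the Seifert attaching matrix of \Cref{sec:seifert_man} (after passing between the (section, fiber) basis used in both constructions), forces the identification $\alpha_i/\beta_i = [b_{i1},\ldots,b_{is}]$.

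The main obstacle is pure convention-tracking: the Seifert attaching matrix $\left(\begin{smallmatrix}-\alpha_i&c_i\\ -\beta_i&d_i\end{smallmatrix}\right)$, the change of basis between $\{[s],[f]\}$ and $\{[m],[l]\}$ from \eqref{eq:pqtorus}, and the orientation-reversing collar correction highlighted in \Cref{rem:fix_neum} all carry signs that must line up precisely so the continued fraction comes out as $\alpha_i/\beta_i$ (rather than $\alpha_i/(\alpha_i-\beta_i)$ or a sign-flipped analog) and so the central decoration is exactly $b=-\sum\beta_i/\alpha_i$. The substantive mathematical content is fully supplied by \Cref{lem:seifpq} and \Cref{lem:bamboo}; the remainder is careful matrix arithmetic.
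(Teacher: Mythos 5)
The paper does not actually prove this proposition: it is stated as a ``known proposition'' and the reader is referred to \cite{Neum2} and \cite{Peder}. So there is no in-paper argument to compare your proposal against, and it has to be judged on its own merits.

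Your stage (a) is the standard normalization and is fine. The genuine gap is in stage (b). You justify $b=-\sum_i\beta_i/\alpha_i$ by asserting ``the familiar fact that a Seifert fibration with boundary admits a horizontal surface iff its rational Euler number vanishes.'' That statement is false as written: for a Seifert fibration over an orientable base with non-empty boundary, horizontal surfaces \emph{always} exist (the paper itself uses this freely -- $\hat M\cong B\times \MS^1$ always has multisections), and the rational Euler number is not even a well-defined invariant of the bounded fibration since $b$ can be shifted arbitrarily by re-framing the free boundary. Correspondingly, the homology relation $(b+\sum_i\beta_i/\alpha_i)\gamma([C])=0$ you want to ``write out'' only exists in the closed case $r=0$, where the section of the complement is a cycle; for $r>0$ that class has boundary and there is no such relation. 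This matches how the paper actually applies the proposition (the example first caps off all boundary components, then reads off $b$ from the closed Seifert manifold). You should restrict stage (b) to $r=0$, or say explicitly that for $r>0$ the integer $b$ is a free framing parameter that the proposition pins down by the requirement that the closed fill-in still carries a horizontal surface.

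Stage (c) is morally right but the direction of the gluing is reversed: \Cref{lem:bamboo}(2) produces the matrix from the boundary of the $n_i$-piece \emph{to} the boundary of the terminal solid torus, whereas the Seifert attaching map $\left(\begin{smallmatrix}-\alpha_i&c\\-\beta_i&d\end{smallmatrix}\right)$ goes from the solid torus into $N_i$; so you must compare with $G^{-1}=\left(\begin{smallmatrix}-d&b\\ c&-a\end{smallmatrix}\right)$, whose first column $(-d,c)^T$ gives $\alpha_i=d$, $\beta_i=-c$, and hence $\alpha_i/\beta_i=-d/c=[b_{i1},\ldots,b_{is}]$. You flag ``convention-tracking'' as the main obstacle, which is fair, but the direction of the matrix is exactly where that tracking bites and should be made explicit rather than folded into a caveat.
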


    Let $(\G, \mathcal{P}, \sigma)$  be a general \tat structure. For 
    simplicity, we
    suppose that $\G$ is connected. This includes as particular cases pure \tat
    graphs and relative \tat graphs. Let $\phi_\G$ be a truly periodic
    representative of the \tat automorphism and let $\Si_{\phi_\G}$ be the 
    mapping torus of the the diffeomorphism $\phi_\G:\Si \rightarrow \Si$. The 
    mapping torus given by a periodic diffeomorphism of a surface is a Seifert 
    manifold.  We describe an algorithm that takes $(\G, \mathcal{P}, \sigma)$ 
    as input and returns as output:
    
    \begin{enumerate}
        \item The invariants of a Seifert manifold: 
        $$M(g,r; 
        (1,b), (\alpha_1, \beta_1), \ldots, (\alpha_k,\beta_k))$$
        diffeomorphic to the mapping torus $\Si_{\phi_\G}$. It is represented 
        by a star-shaped 
        plumbing graph $\Lambda$ 
        corresponding to the Seifert manifold and
        
        \item  a tuple $(p_1, 
        \ldots, p_{\mu},q)$ with $p_i 
        \in \Z/q\Z$ and $q \in \Z_{>0}$ representing the horizontal surface 
        given by $\Si$ with respect to some basis of the homology $H_1(B ; \Z) 
        \simeq \Z^\mu$ of the base space $B$ of $M$.
    \end{enumerate}

    \textbf{Step 1.} We consider $\G^{\phi_\G}$, that is the quotient space
    $\G/\sim$ where $\sim$ is the equivalence relation induced by the action of 
    the safe walks on the graph. This graph is nothing but the image of $\G$ by 
    the projection of the branched cover $p: \Si \rightarrow \Si^{\phi_\G}$ 
    onto the orbit space.
    
    The map $p|_{\G}:\G \rightarrow \G^{\phi_\G}$ induces a ribbon graph 
    structure on $\G^{\phi_\G}$. We can easily get the genus $g$ and number of 
    boundary components $r$ of the thickening of $\G^{\phi_\G}$ from the 
    combinatorics of the graph. This gives us the first two invariants of the 
    Seifert manifold.
    
    \textbf{Step 2.}  Let $sv(\G)$ be the set  of points with non trivial 
    isotropy subgroup in $<\phi_\G>$.  This is the set of branch points of 
    $p:\Si \rightarrow \Si^{\phi_\G}$ by definition. 
    
    Let $v \in sv(\G)$. Then there exists $m<n$ with $n=m \cdot s$ such that 
    $v$ is a fixed point of $\phi_\G^{m}$ (take $m$ the smallest natural number 
    satsifying that property). We can therefore use 
    \Cref{cor:rotation_seifertinv}. We get  that
    $\phi_\G^m$ acts as rotation with rotation number $p/s$ in a small disk 
    centered at $v$. So  around the fiber corresponding to the vertex 
    $v$, the  Seifert manifold is diffeomorphic fiberwise to a ${p,s}$-torus. 
    and the corresponding Seifert pair $(\alpha_v, \beta_v)$  is given by 
    $(\alpha_v, \beta_v) = (s,-b)$ with $bp \equiv 1 \mod q$. 
    
    We do this for every vertex in $sv(\G)$ and we get all the Seifert pairs.
    
    
    \textbf{Step 3.} Since we have already found a complete set of Seifert
    invariants, we have that $$M(g,r;(\alpha_1, \beta_1), 
    \ldots,
    (\alpha_k, \beta_k),(\tilde{\alpha}_{k+1}, \tilde{\beta}_{k+1}))$$ is 
    diffeomorphic to 
    the
    mapping torus of the pair $(\Si, \phi_\G)$. 
    
    Now we use \Cref{prop:seif_to_plumb} to get the normalized form of the 
    plumbing graph associated to the Seifert manifold. 
    
    \textbf{Step 4.} For this step, recall \Cref{sec:hor_sur} and notation
    introduced there. Fix a basis $[S_1^1], \ldots, [S_{\mu}^1]$ of $H_1(B; \Z)$
    where $S_i^1$ is a simple closed curve contained in $\G^{\phi_{\G}}$.
    
    Let $m := \lcm(\alpha_1, \ldots, \alpha_k)$. Observe that necessarily $m|n$ 
    so $n= m \cdot q$ (with $n$ the order of $\phi_\G$).
    
    This number $q$ that we have found is the last term of the cohomology 
    element we are looking for. Of course, it is also the oriented intersection 
    number of  $\hat{\G}=\pi(\G)$ with $C$.  (recall $\pi$ was the projection 
    $M  \rightarrow M/c_m=:\hat{M}$).
    
    Pick any basis of $H_1(B; \Z)$ by picking a collection of circles $S_1^{1},
    \ldots, S_k^1$ contained in the orbit graph $\G^{\phi_\G} \subset B$ that
    generate the homology of the graph. 
    
    Now if we intersect the graph $\G/ {c_m} \cap S_i^1 \times C $ with the 
    torus over one of the representatives of the basis, we get a collection of 
    $k_i$ closed curves, each one isotopic to the curve of slope $p_i'/q_i'$ 
    where  $q_i' \cdot k_i = q$ and $p_i = p_i' \cdot k_i = p_i$. This number, 
    $p_i$, is the $i-th$ coordinate of the cohomology element with respect to 
    the fixed basis.
    
    We can compute $p_1, \ldots, p_k$ directly. Let $S_i^1$ be one of the 
    generators of $H_1(\G^{\phi_\G}; \Z)$. Let $\hat{S}_i^1 
    := \G /c_m  \cap S_i^1 \times \MS^1 $  where $ S_i^1 \times \MS^1 \subset 
    \G^{\phi_\G} \times  \MS^1$; and let $\tilde{S}_i^1 := 
    p|_{\G}^{-1}(\hat{S}_i^1)$. Observe that $\hat{S}_i^1$
    consists of $k_i$ disjoint circles and that $k_i$ divides $q$. Let $q_i' =
    q/k_i$.

    Pick a point $z \in S_i^1$ which is not in the image by $\pi$ of a special
    fiber. Then $\hat{\pi}^{-1}(z) \cap \G/ c_m$ consists of $q$ points lying 
    in the $k_i$ connected components of $\hat{S}_i^1$. Pick one of these 
    connected components and enumerate the corresponding $q_i'$ points in it 
    using the
    orientation induced on that connected component by the given orientation of
    $S_i^1$. Then we have the points $z_1, \ldots, z_{q_i'}$. We observe that by
    construction, these points lie on the same fiber in $\hat{M}$ and this 
    fiber is
    oriented. Follow the fiber from $z_1$ in the direction indicated by the
    orientation, the next point is $z_{t_i}$, with $t_i \in \{1, \ldots, 
    q_i'\}$. We
    therefore find that this connected component of $\hat{S}_i^1$ lies in $S_i^1
    \times \MS^1$ as the curve with slope $(t_i-1)/q_i'$ and so $p_i=(t_i -1) 
    \cdot
    k_i$.

    \subsection{From star-shaped plumbing graph to \tat graphs.}

    The input that we have is:
    \begin{enumerate}
        \item[i)] A Seifert fibering of a manifold $M$.
        \item[ii)] A horizontal surface given by an element in $H^1(B \times 
        \MS^1; \Z)$ that does not vanish on a typical Seifert fiber.
    \end{enumerate}
    
    The output is:
    
    \begin{enumerate}
        \item A general, relative or pure \tat graph such the induced mapping 
        toru is  diffeomorphic to the given plumbing manifold in the input. And 
        such  that the thickening of the graph, represents the horizontal 
        surface given.
    \end{enumerate}
    
    \textbf{Step 1.}We start with a Seifert fibering $M(g,r;(\alpha_1,\beta_1, 
    \ldots, \alpha_k, \beta_k)$. 
    
    We fix a model of the Seifert fibering as in  \nameref{subsec:handy_model}. 
    We
    recall that the model consists of the following data:
    \begin{enumerate}
        \item[i)] The Seifert fibering $s:M \rightarrow B$ where $B$ is a 
        surface of
        genus $g$ and $r$ boundary components.
        \item[ii)] A collection of arcs $\{l_i\}$ with $i=1, \ldots, k$ properly
        embedded in $B$ where the boundary of these arcs lie in one chosen 
        boundary
        component of $B$. These satisfy that when we cut along one of them, say 
        $l_i$ we
        cut off a disk denoted by $D_i$ from $B$ that contains the image of 
        exactly one
        special fiber, we denote the image of this fiber by $x_i$.
        \item[iii)] We have an identification of each solid torus $s^{-1}(D_i)$ 
        with the
        corresponding fibered solid torus $T_{p_i,q_i}$ with $q_i= \alpha_i$ and
        $-p_i\beta_i \equiv 1 \mod \alpha_i$ and $0<p_i<q_i$.
    \end{enumerate}
    
    \textbf{Step 2.} Observe that $B$ is homotopic to a wedge of 
    $\mu = 2g-r+1$ circles that does not contain any $x_i$ for $i=1, \ldots, 
    i$. We can see this wedge as a spine embedded in $B$. Denote by $c$ the 
    common point of all the circles. Now we embed disjoint
    segments $e_i$ with $i=1, \ldots, k$ where each one satisfies that one of 
    its ends lies in the  spine and the other end lies in $x_i$. Also, they 
    do not intersect the wedge of circles at any other point and they also 
    $e_i$ does not intersect any $D_j$ for $j \neq i$. 
    We denote the union of the wedge and these segments by $\tilde{\Lambda}$ and
    observe that $\tilde{\Lambda}$ is a spine of $B$.
    
    \textbf{Step 3.}
    We suppose that the element in $H^1(\hat{M}; \Z)$ given is irreducible,
    otherwise if it is of the form $k(p_1', \ldots, p_\mu', q')$ with $(p_1',
    \ldots, p_\mu', q')$ irreducible, we take the irreducible part, carry out 
    the following construction of the corresponding horizontal surface and then 
    take $k$ parallel copies of this surface.
    
    Recall \Cref{diag:seif_orbit} for the definition of the maps $s, \hat{s}$ 
    and $\pi$.
    
    Once and for all, fix a trivialization $\hat{M} \simeq B \times \MS^1$. We 
    assume that the element $(p_1, \ldots, p_\mu, q) \in
    H^1(\hat{M}; \Z)$ is expressed with respect to the dual basis  $[S_1] , 
    \ldots, [S_{\mu}], [C] $ where  the first $\mu$ are circles of the wedge 
    embedded in  $B$  and $[C]$ is the homology class of $C:=\hat{s}^{-1}(c)$. 
    
    For each $i= 1, \ldots, \mu$, consider the torus $\hat{s}^{-1}(S_i)$ which 
    is naturally trivialized by the trivialization of $\hat{M}$.  We pick in it 
    $k_i$ copies of the curve of slope $p_i'/q'$ where $p_i/q = k_ip_i'/k_iq'$. 
    For each $i$, the curves constructed this way in $\hat{s}^{-1}(S_i)$ 
    intersect $q$ times the curve $C$. Hence we can isotope them so that all of 
    them intersect $C$  in the same $q$ points. We denote the union of these 
    curves by  $\hat{\Lambda}'$. We assume that $\hat{\Lambda}'$ projects to 
    $\tilde\Lambda \setminus \bigcup e_i$ by $B \times \MS^1 \to B$.
    
    By construction, $\hat{\Lambda}'$ is a ribbon graph for the surface 
    horizontal surface $\hat{H} \subset \hat{M}$. Observe that  
    $s(\hat{\Lambda}') \neq  \tilde{\Lambda}$. However $s(\hat{\Lambda}')$ is 
    also a spine of $B$ (it coincides with the wedge of circles in $B$).
    
    Define $\Lambda':=\pi^{-1}(\hat{\Lambda}')$. By the definition of $\pi$, 
    this graph can also be constructed by taking in each of the tori 
    $\pi^{-1}(\hat{s}^{-1}(S_i))=s^{-1}(S_i)$ , $k_i$ copies of the curve of 
    slope $p_i'/n$. Which by construction all intersect in $n$ points in 
    $s^{-1}(c)$.
    
    \textbf{Step 4.} Now we describe $\pi^{-1}(\hat{s}|_{\hat{H}}^{-1}(e_i))$ 
    for 
    each $i=1, \ldots, 
    k$.  First we observe that it is equal to $s|_{H}^{-1}(e_i)$ which  is a 
    collection of  $q\cdot n/\alpha_i$ disjoint start shaped graphs. Each 
    star-shaped 
    piece has $\alpha_i$. To find out the gluings of these arms with $\Lambda'$ 
    one looks as the structure of $s^{-1}(D_i)$ as a $(c,\alpha_i)$- solid 
    torus. To visualize it, place the $q\cdot n/\alpha_i$ star-shaped pieces in 
    a solid cylinder $D^2 \times [0,1]$ and identify top with botton by a 
    $c/\alpha_i)$ rotation. The fibers of the fibered torus give the monodromy 
    on the end of the arms and the attaching to $\Lambda'$.
    
    We define $\Lambda$ as the union of $\Lambda'$ with these star-shaped pieces
    
    \textbf{Step 5.} The embedding of $H$ in the Seifert 
    manifold defines a diffeomorphism $\phi: H \rightarrow H$ in the following 
    way. Let  $x\in H$ and follow the only fiber of the Seifert manifold that 
    passes through $x$ in  the direction indicated by its orientation, 
    we define $\phi(x)$ as the  next point of intersection of that fiber with 
    $H$.
    
    To describe $\phi$ up to isotopy it is enough to give the rotation numbers 
    of  $\phi$ around each boundary component of $H$ plus some spine invariant
    by $\phi$. By construction $\Lambda$ is an invariant
    graph. The fibers of the Seifert fibering give us an automorphism on the 
    graph $\Lambda$. To get the rotation numbers, we cut the thickening $H$ 
    along $\Lambda$ and we get a collection of cylinders $\Lambda_j \times 
    [0,1]$ with $j=1, \ldots, r'$.

    Now we invoke \cite[Theorem 5.12]{jmp} if the monodromy leaves at least $1$ 
    boundary component invariant and we invoke \Cref{thm:general_tat} if the 
    monodromy does not leave any boundary component invariant. This gives us
    a constructive method to find a graph (which in general will be different 
    from $\bigcup_{i=1}^\mu S_i \bigcup_{j=1}^k e_j$) containing all  branch 
    points in $B$  such that it is a retract of $B$ and such that it admits a 
    metric that makes its preimage a \tat graph. 

    \section{Examples}\label{sec:examples}
    
    We apply the algorithms developed in the previous sections to two examples.
    
        \begin{example}
        Suppose we are given the bipartite complete graph $\G$ of type 
        $4,11$ with the cyclic ordering induced by placing $4$ and $11$ 
        vertices in two horizontal parallel lines in the plane and taking the 
        joint of the two sets in that plane. Give each edge length $\pi/2$. 
        This metric makes it into a \tat graph as we already know. Let 
        $\phi_\G$ be a periodic representative of the mapping class
        induced by the \tat structure.
        
        \begin{figure}[!ht]
            \centering
            \includegraphics[scale=0.7]{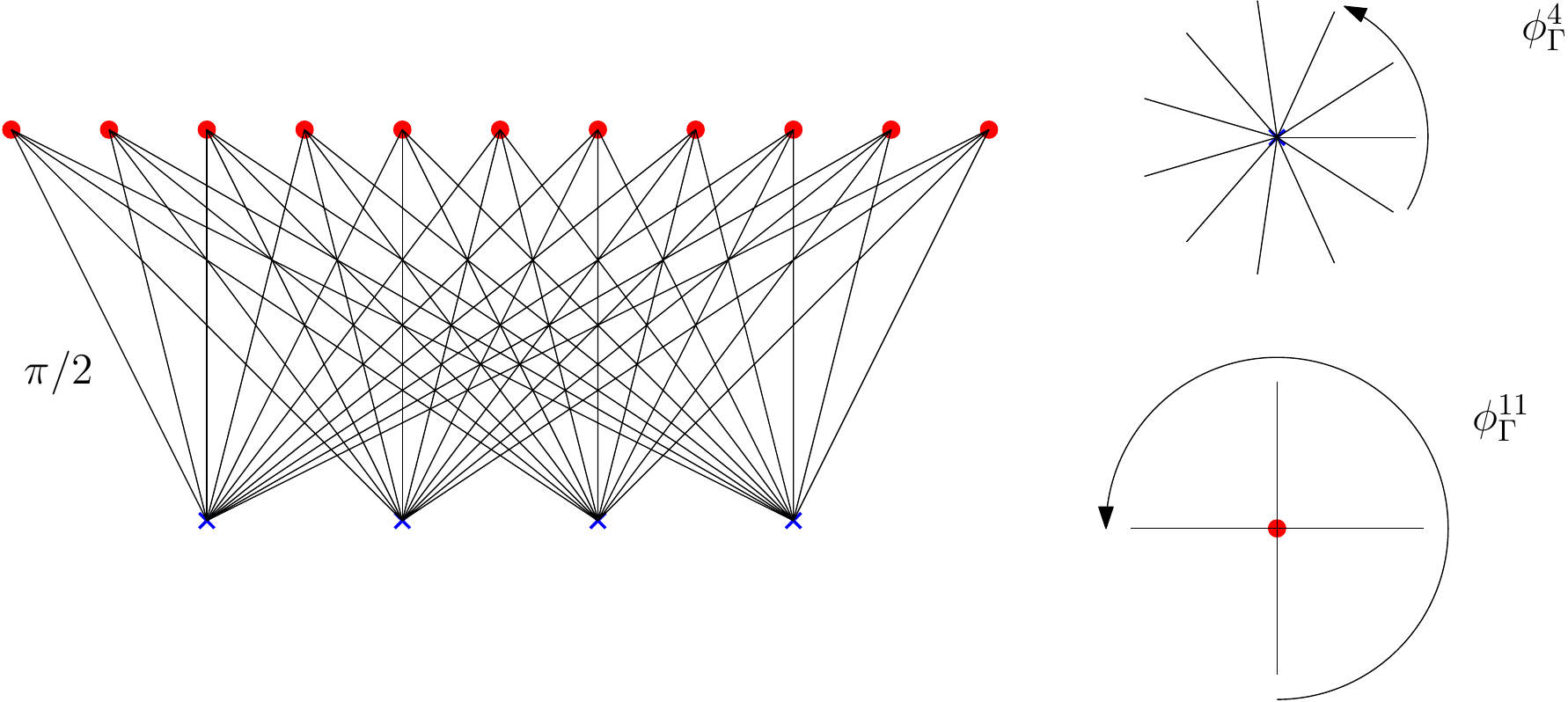}
            \caption{On the left we see the \tat graph $K_{4,11}$. On the right 
                we see a
                small neighbourhood of a vertex of valency $11$ where 
                $\phi_\G^4$ acts as the
                rotation $r_{4/11}$ radians. Equivalently, for a vertex of 
                valency $4$, we see
                that $\phi_\G^{11}$ acts as the rotation $r_{3/4}$.}
            \label{fig:k411}
        \end{figure}
        
        Let's find the associated invariants. One can easily check that the 
        orbit graph is just a segment joining the only two branch 
        points
        so the orbit surface is a disk and hence $g=0$ and $r=1$. 
        
        The map $p:\Si  \rightarrow \Si^{\phi_\G}$ has two branch points that 
        correspond to two Seifert pairs. Let $r_1$ be the branching point in 
        which preimage lie the $4$ points of valency $11$. We choose any of 
        those $4$  points and denote it $p_1$, now $\phi^4$ acts as a rotation 
        with rotation  number $4/11$
        in a small disk around $p_1$ . Hence, the associated normalized Seifert 
        pair is $(11,8)$. 
        Note that $8 \cdot  4 \equiv -1 \mod 11$ and that $0<8 <11$. 
        Equivalently for the other point we 
        find that $\phi_\G^{11}$ is a rotation with rotation number $3/4$ when 
        restricted  to a disk around any of the $11$ vertices of valency $4$. 
        Hence, the  corresponding  normalized Seifert pair is $(4,1)$.
        
        Computing the continued fraction we have that $\frac{11}{8}=[2,2,3,2]$ 
        and $\frac{4}{1}=[4]$. For computing the number $b$ we think of the 
        surface resulting from extending the periodic automorphism to a disk 
        capping off the only boundary component of $\Si$. By a similar 
        argument, since the rotation number induced on the boundary is $-1/44$, 
        this would lead to a new  Seifert pair $(44,1)$. Since these are 
        normalized Seifert invariants, the new  manifold is closed and admits a 
        horizontal surface, we can use  \Cref{prop:seif_to_plumb} and compute 
        the number $b$ as $- 1/4 - 8/11 - 1/44 = -1$.
        
        So the plumbing diagram corresponding to the mapping torus of $\Si$ by 
        $\phi_\G$
        is the following.
        
        \begin{figure}[H]
            \centering
            \includegraphics[scale=0.7]{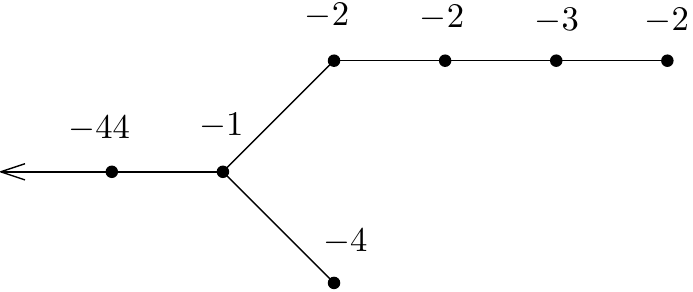}
            \label{fig:plumb0}
        \end{figure}

        which, up to contracting the bamboo that ends in the arrowhead, 
        coincides with the dual graph of the resolution of the 
        singularity of $x^4+y^{11}$ at $0$.
        
        Finally, we are going to compute the element that the surface $\Si$ 
        represents in the homology group $H_1(\Si^{\phi_\G}) \oplus \Z$ . First 
        observe that since $\Si^{\phi_\G}$ is a disk, the group is isomorphic 
        to $0 \oplus \Z$. This tells us that the only possible choices of 
        multisections  in the bundle  $\Si^{\phi_{\G}} \times \MS^1$ are 
        classified (up to isotopy) by the  elements $(0,k)$ with $k \neq 0$. 
        The element $(0,k)$ corresponds to $k$ parallel copies
        of the disk $\Si^{\phi_\G}$. In our case, there is only one such disk 
        so the element is $(0,1)$.
    \end{example}

    \begin{example}\label{ex:algorithm}
        Suppose we are given the following plumbing graph:
        
        \begin{figure}[H]
            \centering
            \includegraphics[scale=0.7]{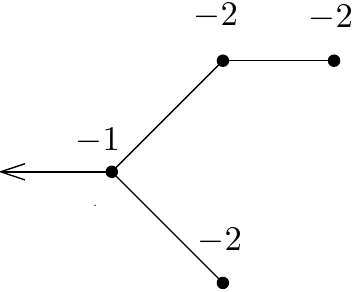}
            \caption{}
            \label{fig:plumb2}
        \end{figure}
        
        We are indicated two of the invariants of the Seifert manifold: the 
        genus of the
        base space $g=0$ and its number of boundary components $r=2$. The base 
        space $B$
        is therefore an annulus. 
        
        We compute the Seifert invariants by interpreting the weights on the 
        two bamboos
        of the plumbing graph as numbers describing continued fractions. We get 
        $[2,2]=
        3/2$ and $[2]=2$ so the Seifert pairs are $(3,2)$ and $(2,1)$. So the
        corresponding Seifert fibering $s:M \rightarrow B$ has two special 
        fibers $F_1$
        (for the pair $(2,1)$ and $F_2$ (for the pair $(3,2)$). Using 
        \Cref{lem:seifpq}
        we have that the Seifert fiber corresponding to the pair $(3,2)$ has a 
        tubular
        neighborhood diffeomorphic to the fibered solid torus $T_{1,3}$; this 
        is because
        $-2 \cdot 1 \equiv 1 \mod 3$. Analogously, the fiber corresponding to 
        the
        Seifert pair $(2,1)$ has a tubular neighborhood diffeomorphic to the 
        fibered
        solid torus $T_{1,2}$.
        
        Now we fix a model for our Seifert manifold. Take an annulus as in
        \Cref{fig:base_ex}. Now we use the kind of model explained in
        \Cref{fig:base_seif}; we choose a boundary component and we pick 
        properly
        embedded arcs (with their boundaries lying on the chosen boundary 
        component) in
        such a way that cutting along one of them cuts off a disk containing 
        only one of
        the two images by $s$ of the special fibers; over those disks in $M$ 
        lie the two
        corresponding fibered solid tori. Let $d$ be the point lying under 
        $\pi(F_1)$
        and let $a$ be the point lying under the fiber $\pi(F_2)$. We pick an 
        embedded
        graph which is a spine of $B$ as in \Cref{fig:base_ex} below, that is, 
        the graph
        is the union of: a circle whose class generates the homology of the 
        base space.
        We denote it by $S$;  a segment joining a point $c \in S$ with the 
        vertex $d$.
        We denote this segment by $D$ and a segment joining a point $b \in S$ 
        with the
        vertex $a$. We denote this segment by $A$. See \Cref{fig:base_ex}.
        
        We denote this graph by $\tilde{\Lambda}$.
        
        \begin{figure}[H]
            \centering
            \includegraphics[scale=0.5]{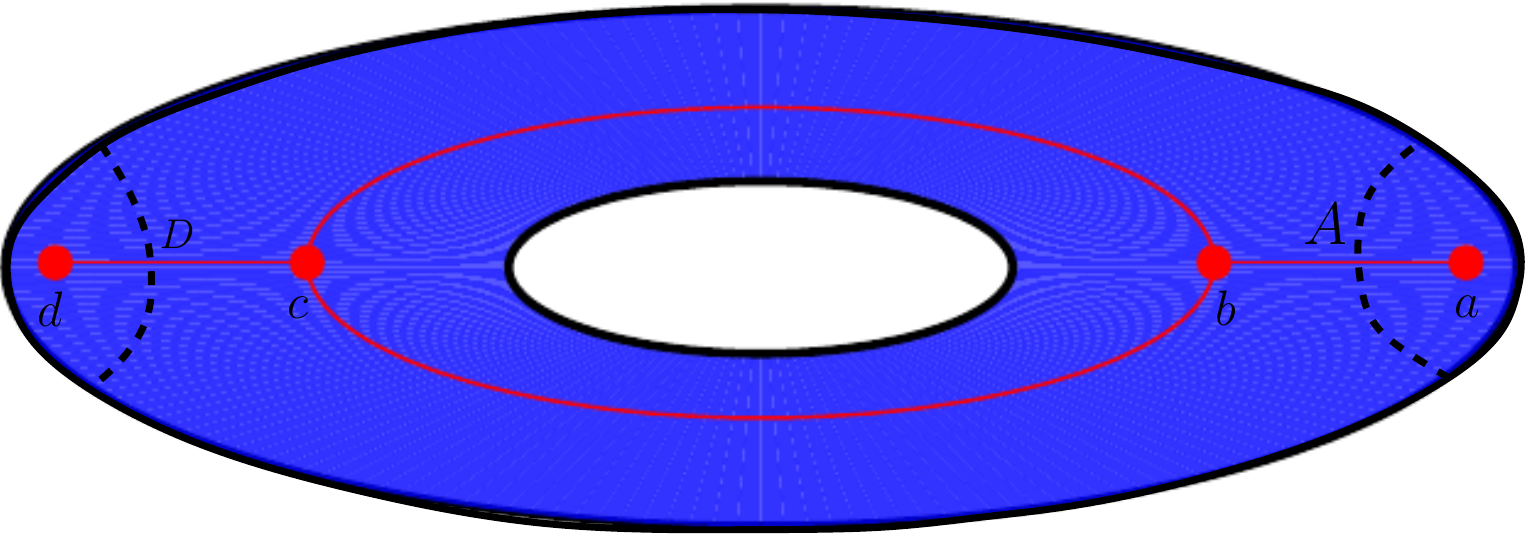}
            \caption{This is the base space $B$ of the Seifert fibering. In red 
            we see
                $\tilde{\G}$ which is formed by a circle and two segments 
                attached to it that
                end at the image by $s$ of the special fibers. The dashed lines 
                represents the
                properly embedded arcs }
            \label{fig:base_ex}
        \end{figure}
        
        Now we consider $\hat{M}$ which is diffeomorphic to $ B \times \MS^1$ 
        which is
        homotopically equivalent to $\tilde{\G} \times \MS^1$. We denote the 
        projection on
        $B$ by $\hat s:\hat{M} \rightarrow B$. The map $\pi: M \rightarrow 
        \hat{M}$
        satisfies that $\hat{s} \circ \pi = s$.
        
        The piece of information missing from the input is the horizontal 
        surface.
        Suppose we are given the element $(1,2) \in H^1(B; \Z) \oplus \Z$ with 
        respect
        to the basis formed by the class of $S$.  Then, the intersection of the
        horizontal surface $\hat{H} \subset \hat{M}$ with the torus $\hat S
        :=\hat{s}^{-1}(S)$ is a curve of slope $1/2$. We also have that
        $\hat{s}^{-1}(A)$ consists of two segments, as well as 
        $\hat{s}^{-1}(D)$.  See
        figure \Cref{fig:hat_m}.
        
        \begin{figure}[h]
            \includegraphics[scale=0.6]{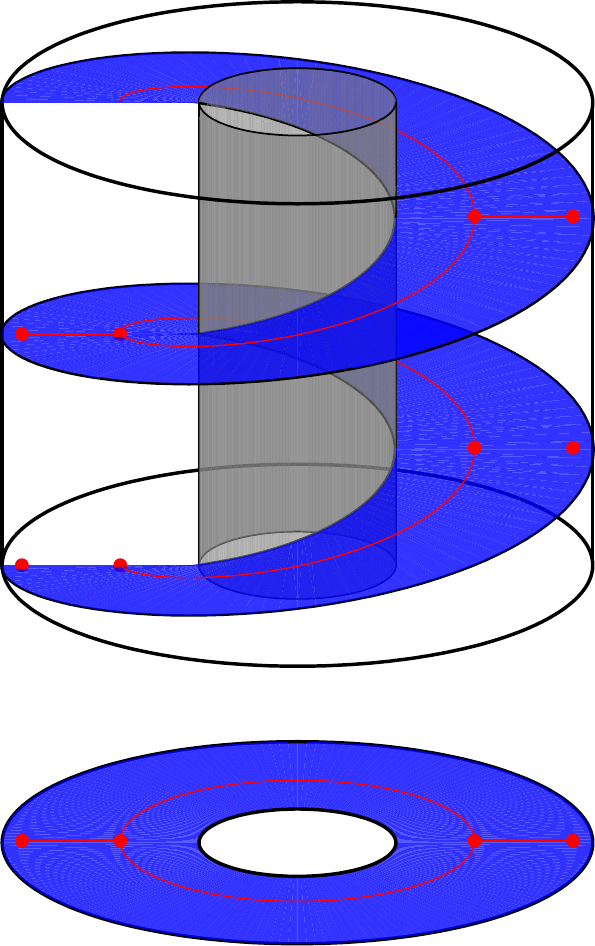}
            \caption{This is $\hat{M}$ together with the base space under it. 
            Lying over the graph $\tilde{\G}$ we can see the graph $\hat{G}$ 
            whose thickening is the  horizontal surface $\hat{H}$ (the blue 
            helicoidal ramp on the  figure). Also we see that lying over the 
            circle of $\tilde{\G}$ lies the closed  curve in  $\hat{\G}$ that 
            is a curve of slope $1/2$ in the torus  $\hat{s}^{-1}(S_1)$.}
            \label{fig:hat_m}
        \end{figure}
        
        The horizontal surface that we are looking for  is 
        $H:=\pi^{-1}(\hat{H})$ that
        is the thickening of $\pi^{-1}(\hat{\G})$. To know the topology of $H$ 
        and the
        action on it of the monodromy, we  construct the ribbon graph
        $\pi^{-1}(\hat{\G})$. We observe that $\lcm(2,3)=6$ so 
        $\pi^{-1}(\hat{S})$ is
        the curve of slope $1/12$ on the torus $s^{-1}(S)$. We also have that 
        $s^{-1}(a)
        = (\pi \circ \hat s)^{-1}(a)$ consists of $4$ and $s^{-1}(A)$ consists 
        of $12$
        segments separated in groups giving valency $3$ to each of the points 
        in 
        points. Equivalently $s^{-1}(d)$ consists of $6$ vertices and 
        $s^{-1}(D)$ of
        $12$ segments naturally separated by pairs. The fact that $s^{-1}(S)$ 
        is a curve
        of slope $1/12$, give us the combinatorics of the graph. Using notation 
        of
        \ref{fig:aux_graph}, and the rotation numbers associated to each of the 
        two
        Seifert pairs, we have that the graph is that of \Cref{fig:aux_graph}.
        
        \begin{figure}[h]
            \centering
            \includegraphics[scale=0.5]{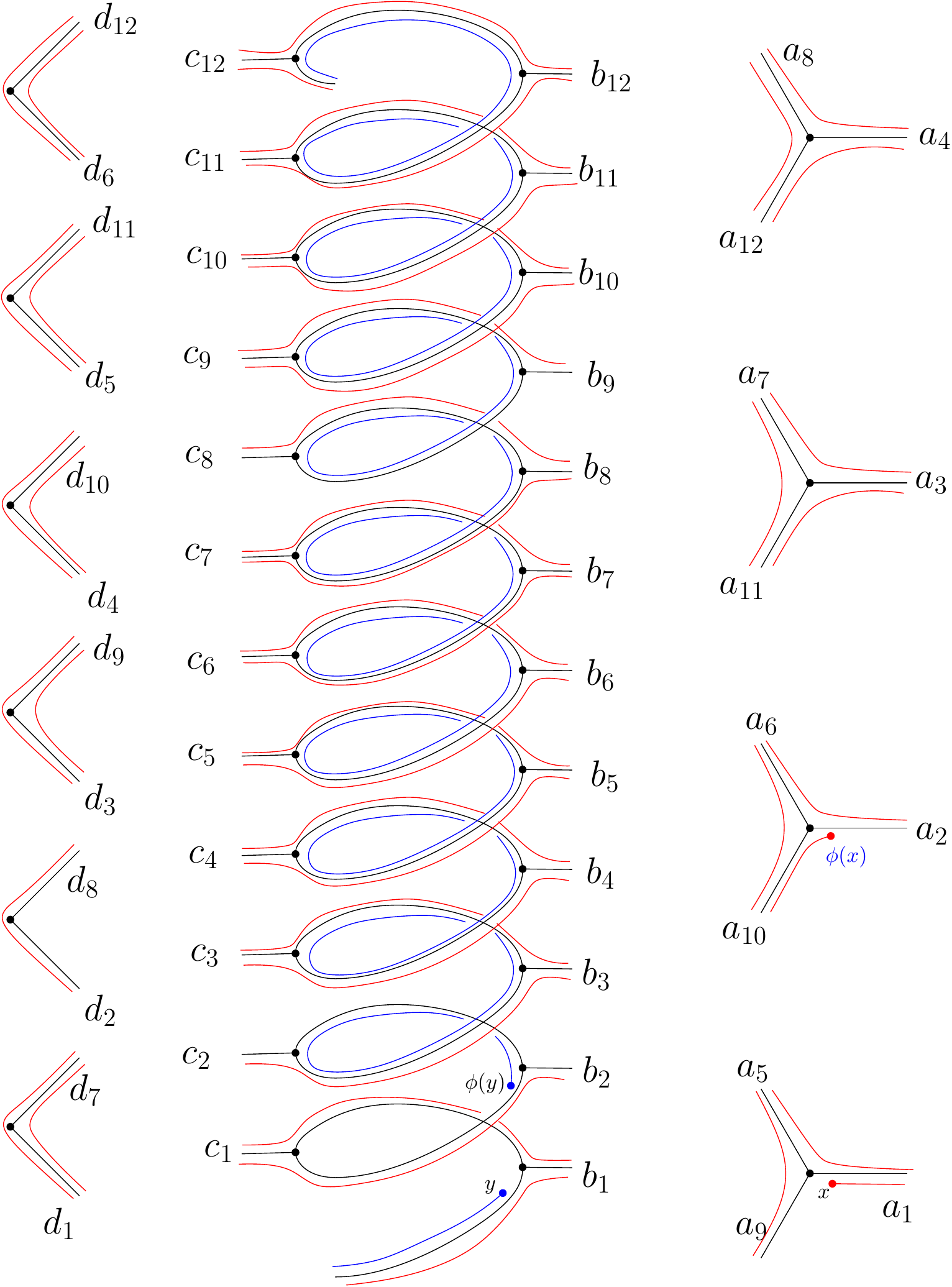}
            \caption{The graph $\pi^{-1}(\hat{\Lambda})$ in black. The letters 
            with
                subindexes are interpreted like this:  the vertex $a_i$ is 
                glued to the vertex
                $b_i$ and the vertex $d_i$ is glued to the vertex $d_i$. In red 
                we see a path
                from $x$ to $\phi(x)$ used to compute the rotation number of 
                $\phi$ with respect
                to the outer boundary component; we observe that the outer 
                boundary component
                retracts to $72$ edges (each edge is counted twice if the 
                boundary component
                retracts to both sides of the edge), and the red path covers 
                $66$ of these
                edges.}
            \label{fig:aux_graph}
        \end{figure}
        
        You can easily compute from the ribbon graph that the surface has $2$ 
        boundary
        components and genus $7$. Since it has only two boundary components, 
        each of
        them is invariant by the action of the monodromy induced by the 
        orientation on
        the fibers. We compute their rotation numbers as explained on Step $5$ 
        of the
        algorithms. We observe that the ''outer'' boundary component retracts 
        to $72$
        edges where an edge is counted twice if the boundary component retracts 
        to both
        sides of it. We pick a point $x$ and observe that the monodromy 
        indicated by the
        orientation of the fibers takes it to the point inmediately above it 
        $\phi(x)$.
        Now we consider a path ''turning right'' starting at $x$ and observe 
        that it
        goes along $66$ edges before reaching $\phi(x)$. Hence, the rotation 
        number of
        $\phi$ with respect to this boundary component is $11/12$. Similarly, 
        we observe
        that the other boundary component retracts to $24$ edges and by a 
        similar
        procedure we can check that $\phi$ also has a rotation number $11/12$ 
        with
        respect to this other boundary component. See figure 
        \Cref{fig:aux_graph}.
        
        Following the construction in \cite[Theorem 5.12]{jmp}, we should put a 
        metric on
        $\hat{G}$ so that the part where the outer boundary component retracts 
        has a length of $\pi/11$ and the same for the other boundary component. 
        But this is impossible given the combinatorics of the graph. That means 
        that this  graph does not accept a \tat metric. However theorem 
        \cite[Theorem 5.12]{jmp} gives us a procedure to find a graphadmittin a 
        \tat metric  producing the given monodromy. In this case, 
        it is enough to consider the following  graph.
        
        \begin{figure}[H]
            \centering
            \includegraphics[scale=0.5]{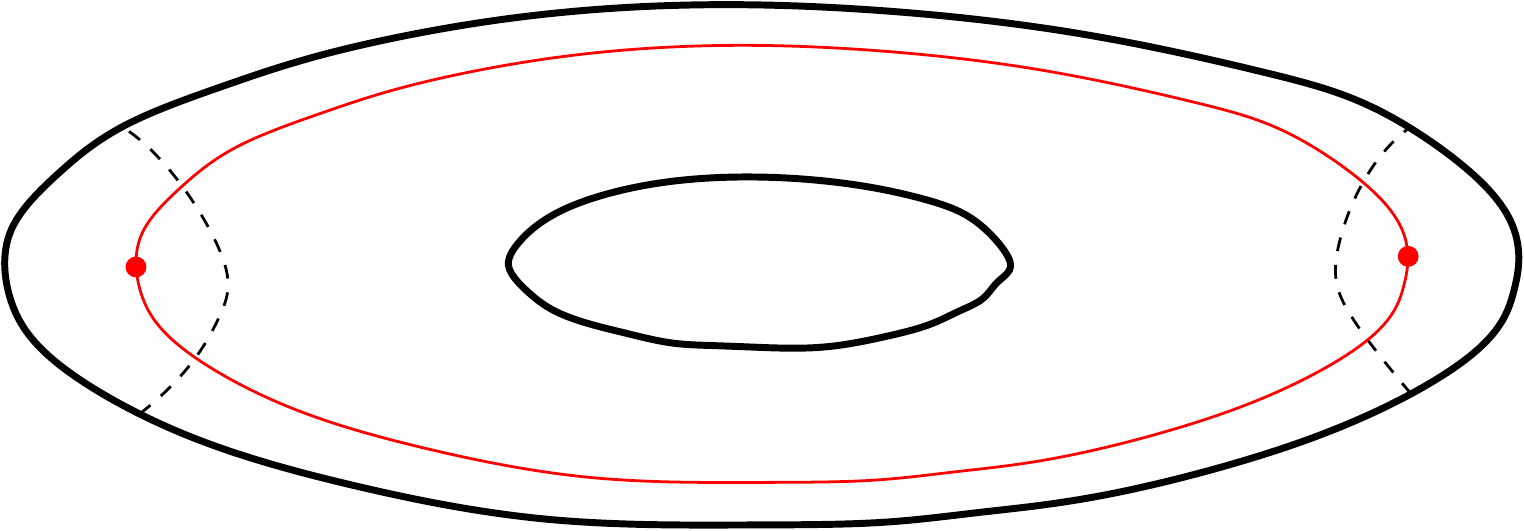}
            \caption{Graph $\tilde{\G}$ that admits a \tat metric.}
            \label{fig:base_ex_def}
        \end{figure}
        
        If we call this graph $\tilde{\G}$ we see that that 
        $\G:=p^{-1}(\tilde{\G})$ is
        the following graph:
        
        \begin{figure}[H]
            \centering
            \includegraphics[scale=0.5]{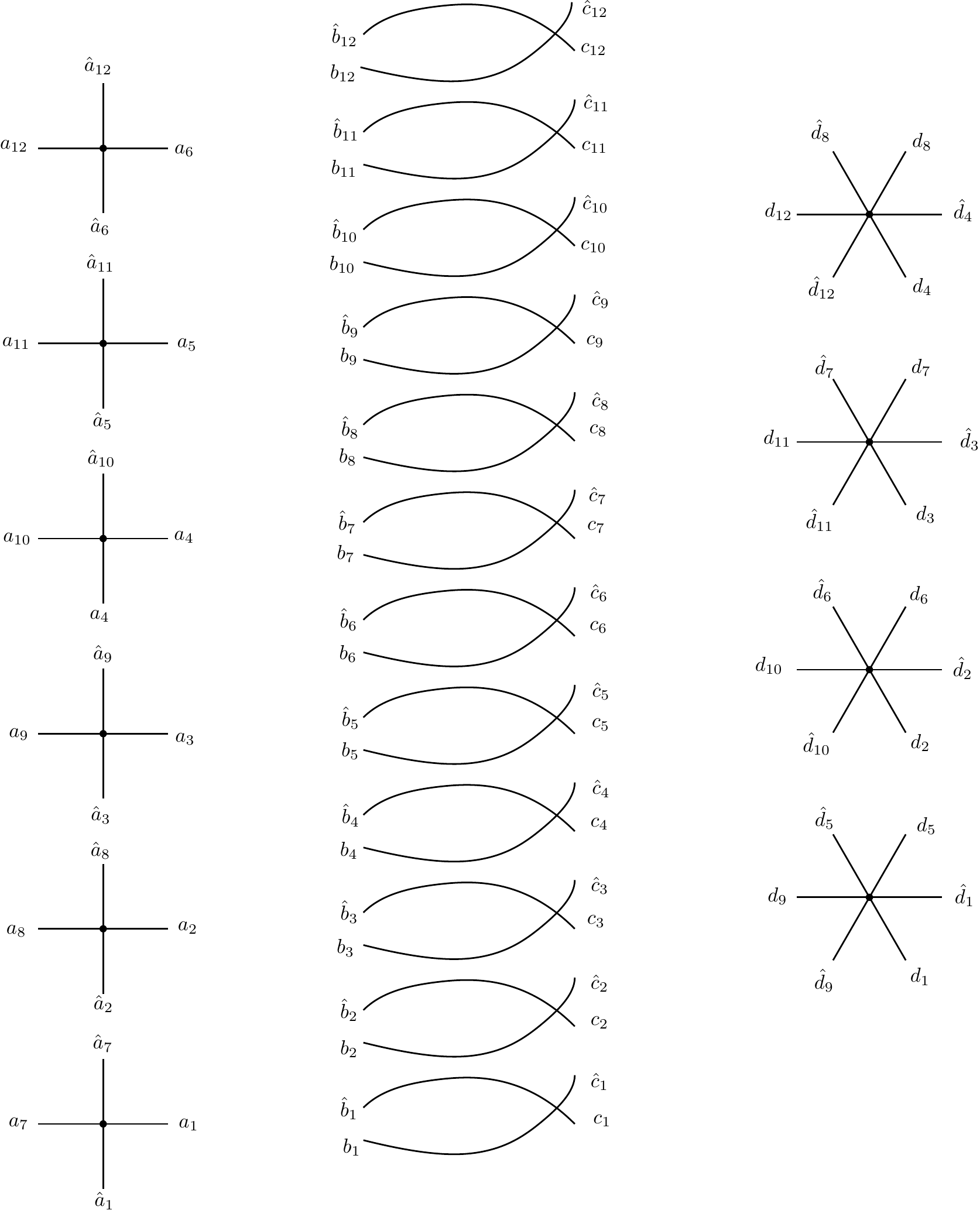}
            \caption{The \tat graph $\G$. The notation means that $a_i$ is 
            glued to $b_i$,
                $\hat{a}_i$ to $\hat{b}_i$, $c_i$ to $d_i$ and $\hat{c}_i$ to 
                $\hat{d}_i$ for
                all $i=1, \ldots, 12$.}
            \label{fig:tat_graph}
        \end{figure}
    
    By setting each 
    of the two edges of the circle $\tilde{\G}$ has length $\pi/22$, then $\G$ 
    is a  pure \tat graph modelling the action of the monodromy on the 
    horizontal surface  $H$.
    \end{example}
    \bibliographystyle{alpha}
    \bibliography{bibliography}

\end{document}